\setlist[itemize]{labelindent=\parindent,leftmargin=*,itemsep=3pt,topsep=5pt}
\setlist[enumerate]{label=\textup{(\roman{*})},itemsep=3pt,labelindent=3pt,leftmargin=*}
\newcommand{\R}{\mathbb{R}}
\newcommand{\N}{\mathbb{N}} 
\newcommand{\C}{\mathbb{C}}
\newcommand{\Lp}[2][]{\mathrm{L}_{#2\ifthenelse{\equal{#1}{}}{}{,#1}}} 
\newcommand{\Lb}{\mathcal{L}_{\mathrm{b}}} 
\newcommand{\Lbw}{\Lb^{\textnormal{w}}} 
\newcommand{\Hspace}{\mathrm{H}}
\newcommand{\Hol}{\mathrm{Hol}}
\newcommand{\Ses}{\mathrm{Ses}_{\mathrm{b}}}
\newcommand{\Cc}[1][\infty]{\mathrm{C}_{\mathrm{c}}^{#1}}
\newcommand{\indicator}{\mathds{1}} 
\newcommand{\iu}{\mathrm{i}} 
\newcommand{\euler}{\mathrm{e}} 
\newcommand{\dd}{\mathrm{d}} 
\newcommand{\dx}[1][x]{\,\dd#1}
\newcommand{\conj}[1]{\overline{#1}}
\newcommand{\ball}{\mathrm{B}}
\DeclareMathOperator{\ran}{ran}
\DeclareMathOperator{\dom}{dom}
\DeclareMathOperator{\absbd}{s_b}
\DeclareMathOperator{\Div}{div}
\renewcommand{\div}{\Div}
\DeclareMathOperator{\divn}{\mathring{div}}
\DeclareMathOperator{\grad}{grad}
\DeclareMathOperator{\gradn}{\mathring{grad}}
\DeclareMathOperator{\rot}{curl}
\DeclareMathOperator{\rotn}{\mathring{curl}}
\DeclareMathOperator{\rg}{\mathfrak{g}}
\DeclareMathOperator{\rgn}{\mathfrak{g}_0}
\DeclareMathOperator{\rr}{\mathfrak{c}}
\DeclareMathOperator{\rrn}{\mathfrak{c}_0}
\DeclarePairedDelimiter{\set}{\{}{\}}
\DeclarePairedDelimiter{\norm}{\lVert}{\rVert}
\DeclarePairedDelimiter{\abs}{\vert}{\vert}
\DeclarePairedDelimiterX{\dset}[2]{\{}{\}}{#1\,\delimsize\vert\,\mathopen{} #2}
\DeclarePairedDelimiterX{\scprod}[2]{\langle}{\rangle}{#1,#2}
\DeclarePairedDelimiterX{\dualprod}[2]{\langle}{\rangle}{#1,#2}
\DeclarePairedDelimiterX{\sdprod}[2]{\llangle}{\rrangle}{#1,#2} 
\renewcommand{\Re}{\operatorname{Re}}
\renewcommand{\Im}{\operatorname{Im}}
\newcommand{\cl}[2][]{\overline{#2}\ifthenelse{ \equal{#1}{} }{}{^{#1}}} 
\newcommand{\weakto}{\rightharpoonup}
\theoremstyle{plain}
\newtheorem{theorem}{Theorem}[section]
\newtheorem{lemma}[theorem]{Lemma}
\newtheorem{corollary}[theorem]{Corollary}
\newcommand{\myqedhere}{\iftoggle{endwithsymbol}{\qedhere}{}}
    \declaretheorem[style=definition,sibling=theorem,qed=\ding{170}]{definition}
    \declaretheorem[style=definition,sibling=theorem,qed=\ding{169}]{example}
    \declaretheorem[style=definition,sibling=theorem,qed=\ensuremath{\heartsuit}]{problem}
    \declaretheorem[style=definition,sibling=theorem,qed=\ensuremath{\heartsuit}]{assumption}
    \declaretheorem[style=definition,numbered=no,qed=\ensuremath{\heartsuit}]{claim}
    \declaretheorem[style=remark,sibling=theorem,qed=\ding{169}]{remark}
    \theoremstyle{definition}
    \newtheorem{definition}[theorem]{Definition}
    \newtheorem{example}[theorem]{Example}
    \theoremstyle{remark}
    \newtheorem{remark}[theorem]{Remark}
\title[Operator Continuity for Evolutionary Equations]{Weak Operator Continuity for Evolutionary Equations}
\date{\today}
\keywords{Homogenisation, H-convergences, nonlocal H-convergence, Evolutionary equations, piezo-electricity}
\subjclass{32C18, 35B27, 74Q10, 78M40}
\author[A.~Buchinger]{Andreas Buchinger\,\orcidlink{0009-0004-4203-5874}}
\email{andreas.buchinger@math.tu-freiberg.de}
\author[N.~Skrepek]{Nathanael Skrepek\,\orcidlink{0000-0002-3096-4818}}
\email{nathanael.skrepek@math.tu-freiberg.de}
\author[M.~Waurick]{Marcus Waurick\,\orcidlink{0000-0003-4498-3574}}
\email{marcus.waurick@math.tu-freiberg.de}
\address{TU Bergakademie Freiberg \\
  Institute of Applied Analysis \\
  Akademiestrasse 6 \\
  D-09596 Freiberg \\
  Germany}
\dedicatory{Dedicated to Fritz Gesteszy on the occasion of his 70th birthday}
\begin{document}

\begin{abstract}
 Considering evolutionary equations in the sense of Picard, we identify a certain topology for material laws rendering the   solution operator continuous if considered as a mapping from the material laws into the set of bounded linear operators, where the latter are endowed with the weak operator topology. The topology is a topology of vector-valued holomorphic functions and provides a lift of the previously introduced nonlocal $\mathrm{H}$-topology to particular holomorphic functions. The main area of applications are nonlocal homogenisation results for coupled systems of time-dependent partial differential equations. A continuous dependence result for a nonlocal model for cell migration is also provided.
\end{abstract}

\maketitle

\section{Introduction}%
\label{sec:introduction}

Evolutionary Equations in the sense of Picard provide a unified Hilbert space framework to address well-posedness, regularity, long-time behaviour and other qualitative as well as quantitative properties of predominantly time-dependent partial differential equations. The origins date back to the seminal papers \cite{Pi09,PM11}; several examples can be found in \cite{PMTW20}. We particularly refer to the monograph \cite{SeTrWa22} for a self-contained round-up of the theory. In the linear and autonomous case, evolutionary equations take the form
\begin{equation*}
   (\partial_t M(\partial_t)+A)U=F,
\end{equation*}
formulated in some space-time Hilbert space of functions $\R \to \mathcal{H}$, where $\mathcal{H}$ is a Hilbert space modelling the spatial dependence. $F$ is a given right-hand side, $U$ is the unknown, $A$ is a skew-selfadjoint linear operator, containing the spatial derivatives, and $\partial_t$ is an operator-realisation of the time-derivative. The main conceptual difference to other more traditional approaches of addressing time-dependent partial differential equations is the \emph{material law operator}
\begin{equation*}
   M(\partial_t),
\end{equation*}
which in turn is a bounded, operator-valued holomorphic function $M$ defined on a certain right-half plane of the complex numbers applied to the time-derivative in a sense of a functional calculus. As the name suggests, the material law operator encodes the underlying material properties, i.e., the constitutive relations and their corresponding material coefficients. As a consequence, the complexity of the material is \emph{not} contained in $A$ but rather in $M(\partial_t)$. This has certain advantages. For instance, domain issues or interface phenomena can be more aptly dealt with in the framework of evolutionary equations. Since the material properties are encoded in $M(\partial_t)$, it is of no surprise that homogenisation problems can be reformulated in the framework of evolutionary equations. The main question for these problems is: given a sequence of material laws $(M_n)_n$ does there exist a material law $M$ such that
\begin{equation*}
  (\partial_t M_n(\partial_t)+A)^{-1}\to   (\partial_t M(\partial_t)+A)^{-1}
\end{equation*}
in a suitable (operator) topology as $n\to\infty$. In this general situation, it is possible to answer the question in the affirmative, see \cite{BuErWaUnpub}. Indeed, based on a result for (abstract) Friedrichs systems \cite{BV14,BEW2023}, one can show that such a material law $M$ exists if the above convergence is assumed in the weak operator topology, arguably a very weak assumption. In applications, this result might not be precise enough for the identification of $M$.

Asking more of the operator $A$, we will define a topology on the set of material laws such that if $M_n\to M$ in this topology, then the solution operators converge in the weak operator topology. Since it is possible to show that suitably bounded material laws are compact under this topology, one can also recover a special case of the main result in \cite{BuErWaUnpub} in this case. However, the upshot of the present article is the identification of the topology. The topology introduced is an appropriate generalisation of the Schur topology or nonlocal $\mathrm{H}$-topology as introduced in \cite{Wa22}. Since, in applications, this topology is the precise operator-theoretic description of the topology induced by $\mathrm{H}$-convergence (see \cite{T09,MT97} for $\mathrm{H}$-convergence and \cite{Wa18} for its connections to the nonlocal case) known homogenisation results can be used to find the limit of $M_n$ and, immediately, homogenisation results for time-dependent problems can be obtained. Moreover, note that, in a certain way, the compactness result of the present article can also be viewed as a variant of \cite[Lemma 10.10]{T09} as it asserts that (nonlocal) $\mathrm{H}$-convergence preserves holomorphic dependence on a parameter in the limit.

The idea of finding an appropriate topology on the set of operator-valued holomorphic functions to describe homogenisation problems goes back to the PhD-Thesis \cite{W11}. It has since then been applied to ordinary differential equations (with infinite-dimensional state spaces) \cite{W12,W14}, to partial differential equations \cite{W13,W14a}, or to systems of partial differential equations \cite{W16}. The main assumptions imposed on $A$ were either $A=0$, $A$ being invertible with compact resolvent or, more generally, $\dom(A)\cap \ker(A)^{\perp}$ being compactly embedded in $\mathcal{H}$. An even more general assumption on $A$ was treated in \cite{W16} with severe technical challenges and seemingly undue restrictions on the admissible set of material laws. Here, we may entirely drop all these additional assumptions on the material law imposed in \cite{W16} and thus provide the recently most general form of continuous dependence results for solution operators of evolutionary equations depending on their material laws. We emphasise that this theorem also supersedes the findings for the autonomous case in \cite{W16h}. The present result is also easier to apply compared to previous versions. The approach particularly works for systems of partial differential equations and nonlocal material laws.

We quickly outline how the paper is organised. In \Cref{sec:nHc} we briefly recall the concept of nonlocal $\mathrm{H}$-convergence and discuss its connection to the classical notion of $\mathrm{H}$-convergence.
The concept of evolutionary equations is introduced in \Cref{sec:EEs}. In this section, the notion of material laws and material law operators is properly defined and the fundamental result -- Picard's theorem -- is recalled. \Cref{sec:cluwop} describes the topology of compact convergence for operator-valued holomorphic functions with respect to the weak operator topology. An analogue of the Banach-Alaoglu theorem (\Cref{th:alaoglu-for-holomorphic-linear-operators}) is presented and proved in a comprehensive manner filling some details missing in the (sketch) proofs in \cite[Theorem 4.3]{W14a} or \cite[Theorem 3.4]{W12}. The developed results are used to define the desired topology that characterises nonlocal $\mathrm{H}$-convergence for material laws in \Cref{sec:holwop}, where we also establish a compactness result of said topology. The main theorem of the present contribution is contained in \Cref{sec:AEEs}, where we provide the continuity statement for the solution operator as a mapping from material laws taking values in the bounded linear operators in space-time endowed with the weak operator topology. \Cref{sec:exs} is devoted to two examples: one is about a nonlocal model for cell migration and the other one is concerned with a (nonlocal) homogenisation problem for (scalar) piezo-electricity. We conclude our findings in \Cref{sec:con} and recall some known results in the \Cref{app:app}.
\par\bigskip
All scalar products considered are linear in the first component and antilinear in the second.

\section{Nonlocal $\mathrm{H}$-Convergence}\label{sec:nHc}

The present section is devoted to summarise the rationale introduced in \cite{Wa18}, exemplified in \cite{NW22}, and further studied in \cite{Wa22}. More precisely, we let $\mathcal{H}$ be a Hilbert space and let $\mathcal{H}_0\subseteq \mathcal{H}$ be a closed subspace; $\mathcal{H}_1\coloneqq \mathcal{H}_0^\bot$. Then, any bounded linear operator $M\in \Lb(\mathcal{H})$ can be represented as a $2$-by-$2$ operator matrix
\begin{equation*}
  M = \begin{pmatrix} M_{00}& M_{01} \\ M_{10} & M_{11}\end{pmatrix},
\end{equation*}
where $M_{ij}\in \Lb(\mathcal{H}_j,\mathcal{H}_i)$, $i,j\in \{0,1\}$ (cf.~\Cref{le:components-holomorphic}). We define
\begin{align*}
  \mathcal{M}(\mathcal{H}_{0},\mathcal{H}_{1})
  &\coloneqq \dset*{M \in \Lb(\mathcal{H})}{M_{00}^{-1} \in \Lb(\mathcal{H}_{0}), M^{-1} \in \Lb(\mathcal{H})} \\
  \mathcal{M}(\alpha)
  &\coloneqq
    \begin{aligned}[t]
      \left\{\vphantom{\frac{1}{\alpha_{1}}}M \in \mathcal{M}(\mathcal{H}_{0},\mathcal{H}_{1})
      \,\right|\,&
             \Re M_{00} \geq \alpha_{00}, \Re M_{00}^{-1} \geq \frac{1}{\alpha_{11}}, \\
           &\quad\norm{M_{10} M_{00}^{-1}} \leq \alpha_{10}, \norm{M_{00}^{-1}M_{01}} \leq \alpha_{01}, \\
           &\qquad\Re (M_{11} - M_{10}M_{00}^{-1}M_{01})^{-1} \geq \frac{1}{\alpha_{11}}, \\
           &\left.\vphantom{\frac{1}{\alpha_{1}}}\quad\qquad\mspace{5mu}\Re (M_{11} - M_{10}M_{00}^{-1}M_{01}) \geq \alpha_{00}
      \right\},
    \end{aligned}
\end{align*}
where $\alpha=(\alpha_{ij})_{i,j\in \set{0,1}}\in (0,\infty)^{2\times 2}$. In applications, see \cite{Wa18,NW22,Wa22}, \Cref{ex:Htop} or \Cref{subsec:hompie} below, the decomposition assumed for $\mathcal{H}$ is drawn from the Helmholtz decomposition. This then leads to an appropriate generalisation of $\mathrm{H}$-convergence (or $\mathrm{G}$-convergence) to a general operator-theoretic and possibly but not necessarily nonlocal setting (see \Cref{thm:mrCVPDE}). The definition of nonlocal $\mathrm{H}$-convergence reads as follows.

\begin{definition}\label{def:nHc}
  The \emph{nonlocal $\mathrm{H}$-topology} or \emph{Schur topology}, $\tau(\mathcal{H}_0,\mathcal{H}_1)$, on $\mathcal{M}(\mathcal{H}_0,\mathcal{H}_1)$ is given as the initial topology given by the mappings
  \begin{align*}
    \mathcal{M}(\mathcal{H}_0,\mathcal{H}_1)\ni M &\mapsto M_{00}^{-1} \in \Lbw(\mathcal{H}_0) \\
    \mathcal{M}(\mathcal{H}_0,\mathcal{H}_1)\ni M &\mapsto M_{10}M_{00}^{-1} \in \Lbw(\mathcal{H}_0,\mathcal{H}_1)\\
    \mathcal{M}(\mathcal{H}_0,\mathcal{H}_1)\ni M &\mapsto M_{00}^{-1}M_{01} \in \Lbw(\mathcal{H}_1,\mathcal{H}_0)\\
    \mathcal{M}(\mathcal{H}_0,\mathcal{H}_1)\ni M &\mapsto M_{11}-M_{10}M_{00}^{-1}M_{01} \in \Lbw(\mathcal{H}_1),
  \end{align*}
  where $\Lbw(\mathcal{X},\mathcal{Y})$ denotes the space $\Lb(\mathcal{X},\mathcal{Y})$ endowed with the weak operator topology.
\end{definition}

For later use and illustrational purposes, we quickly provide a sufficient condition for convergence in the topology just introduced. The main tool for the proof is a modification of the proof of \cite[Prop.~13.1.4]{SeTrWa22} (see also \Cref{le:sot-inverse-sequence-conv-crit}).

\begin{lemma}\label{le:sot-conv-implies-nonlocal-h-pointw}
  Let $(M_{n})_{n\in\N}$ be a sequence in $\mathcal{M}(\alpha)$ that converges to $M \in \Lb(\mathcal{H})$ in the strong operator topology. Then, $M \in \mathcal{M}(\alpha)$ and $(M_{n})_{n\in\N}$ converges to $M$ in $\hyperref[def:nHc]{\tau(\mathcal{H}_0,\mathcal{H}_1)}$.
\end{lemma}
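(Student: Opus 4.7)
The plan is to upgrade strong operator topology (SOT) convergence to the (stronger than $\tau(\mathcal{H}_0,\mathcal{H}_1)$) statement that all four defining maps in \Cref{def:nHc} even converge in the SOT, while simultaneously showing that the bounds defining $\mathcal{M}(\alpha)$ pass to the limit. All four convergence statements in \Cref{def:nHc} are evidently implied by SOT convergence, so it suffices to establish the latter.

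First I would note that $M_{n,ij} \to M_{ij}$ in the SOT for $i,j \in \set{0,1}$, because the orthogonal projections $P_{0}, P_{1}$ onto $\mathcal{H}_{0}, \mathcal{H}_{1}$ are bounded and hence SOT-continuous multipliers. By the uniform boundedness principle, each $(M_{n,ij})_{n}$ is norm-bounded. Next, since $\Re M_{n,00} \geq \alpha_{00}$, testing against $x \in \mathcal{H}_{0}$ and passing to the weak limit yields $\Re M_{00} \geq \alpha_{00}$, which in particular gives invertibility of $M_{00}$ with $\norm{M_{00}^{-1}} \leq \alpha_{00}^{-1}$, and analogously the uniform bound $\norm{M_{n,00}^{-1}} \leq \alpha_{00}^{-1}$. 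At this point I would invoke the standard second resolvent-type identity
\begin{equation*}
  M_{n,00}^{-1} - M_{00}^{-1} = M_{n,00}^{-1}(M_{00} - M_{n,00})M_{00}^{-1},
\end{equation*}
which, combined with the uniform bound on $M_{n,00}^{-1}$ and pointwise SOT convergence of $M_{n,00}$, gives $M_{n,00}^{-1} \to M_{00}^{-1}$ in the SOT (this is exactly the content of the referenced \Cref{le:sot-inverse-sequence-conv-crit}).

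The four mappings of \Cref{def:nHc} then converge in the SOT by the elementary fact that if $A_{n} \to A$ in SOT with $(A_{n})_{n}$ norm-bounded and $B_{n} \to B$ in SOT, then $A_{n} B_{n} \to AB$ in SOT (applied iteratively to the products $M_{n,10}M_{n,00}^{-1}$, $M_{n,00}^{-1}M_{n,01}$, and $(M_{n,10}M_{n,00}^{-1})M_{n,01}$, using uniform boundedness of each factor either by hypothesis or by Banach--Steinhaus). In particular the Schur complement $S_{n} \coloneqq M_{n,11} - M_{n,10}M_{n,00}^{-1}M_{n,01}$ converges to $S \coloneqq M_{11} - M_{10}M_{00}^{-1}M_{01}$ in the SOT.

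The remaining task is to verify the defining inequalities of $\mathcal{M}(\alpha)$ for $M$. The norm bounds $\norm{M_{10}M_{00}^{-1}} \leq \alpha_{10}$ and $\norm{M_{00}^{-1}M_{01}} \leq \alpha_{01}$ pass to the limit by weak-operator lower semicontinuity of the norm; the positivity bounds $\Re M_{00}^{-1} \geq \alpha_{11}^{-1}$ and $\Re S \geq \alpha_{00}$ pass to the limit by testing against vectors and using SOT-implies-weak convergence (as done above for $\Re M_{00}$). The bound $\Re S \geq \alpha_{00}$ in particular forces $S$ to be invertible with $\norm{S^{-1}} \leq \alpha_{00}^{-1}$, so the same resolvent-identity argument as before gives $S_{n}^{-1} \to S^{-1}$ in SOT, and $\Re S^{-1} \geq \alpha_{11}^{-1}$ then passes to the limit in the weak sense. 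This exhausts the conditions in the definition of $\mathcal{M}(\alpha)$ and completes the proof. The only real care required is the bookkeeping around which sequences are a priori norm-bounded (one uses Banach--Steinhaus and the lower bound $\Re M_{n,00} \geq \alpha_{00}$, not any uniform bound on $M_{n,10}$ or $M_{n,01}$ coming from $\alpha$), so that each application of the SOT-product lemma is justified; this is the only place where one could easily slip.
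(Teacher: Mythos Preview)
Your proof is correct and follows essentially the same route as the paper's: pass to block components, use $\Re M_{n,00}\geq\alpha_{00}$ to invert $M_{00}$ with uniform bounds, invoke \Cref{le:sot-inverse-sequence-conv-crit} (equivalently your resolvent identity) for SOT convergence of the inverses, then use sequential SOT-continuity of products for the remaining three maps and pass all inequalities to the limit. The one item you do not mention is $M^{-1}\in\Lb(\mathcal{H})$, which is part of the definition of $\mathcal{M}(\mathcal{H}_0,\mathcal{H}_1)\supseteq\mathcal{M}(\alpha)$; the paper observes that this follows immediately from the invertibility of $M_{00}$ and of the Schur complement via the block-inverse formula.
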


\begin{proof}
  First, note that the Pythagorean theorem implies $M_{n,00}\varphi \to M_{00}\varphi$
  and $M_{n,10}\varphi \to M_{10}\varphi$ for all $\varphi \in \mathcal{H}_{0}$, as well as
  $M_{n,01}\psi \to M_{01}\psi$ and $M_{n,11}\psi \to M_{11}\psi$ for all $\psi\in\mathcal{H}_{1}$.

  Since $M_{n} \in \mathcal{M}(\alpha)$ holds, we have $\Re M_{n,00} \geq \alpha_{00}$  for $n\in\N$.
  Together with $M_{n,00}\varphi \to M_{00}\varphi$ for $\varphi \in \mathcal{H}_{0}$, this shows
\begin{equation*}
  \Re M_{00} \geq \alpha_{00}\text{.}
\end{equation*}
  Considering \Cref{le:realpart-bounded-from-below-implies-invertible},
  we infer that $M_{00}$ is boundedly invertible and that the sequence $(M_{n,00}^{-1})_{n\in\N}$
  is uniformly bounded in the operator norm. Hence, we can apply \Cref{le:sot-inverse-sequence-conv-crit}
  to prove
\begin{equation*}
  M_{n,00}^{-1}\varphi \to M_{00}^{-1}\varphi
\end{equation*}
  for all $\varphi \in \mathcal{H}_{0}$.
  We also know that
  $\Re M_{n,00}^{-1} \geq 1/\alpha_{11}$ for all $n\in\N$, which now implies
\begin{equation*}
  \Re M_{00}^{-1} \geq 1/\alpha_{11}\text{.}
\end{equation*}

  Addition and multiplication of operators are
  sequentially continuous w.r.t.\ the strong operator topology. Thus, we immediately
  obtain
  \begin{align*}
    M_{n,10} M_{n,00}^{-1}\varphi&\to M_{10} M_{00}^{-1}\varphi\text{,}\\
    M_{n,00}^{-1}M_{n,01}\psi&\to M_{00}^{-1}M_{01}\psi\text{ and}\\
    M_{n,11}\psi - M_{n,10}M_{n,00}^{-1}M_{n,01}\psi&\to M_{11}\psi - M_{10}M_{00}^{-1}M_{01}\psi
  \end{align*}
  for all $\varphi \in \mathcal{H}_{0}$ and all $\psi\in\mathcal{H}_{1}$.
  Once again, \Cref{le:realpart-bounded-from-below-implies-invertible} and \Cref{le:sot-inverse-sequence-conv-crit} yield
  \begin{align}
    \Re (M_{11} - M_{10}M_{00}^{-1}M_{01}) &\geq \alpha_{00}\text{ and}\label{eq:inv-cond-block-matrix}\\
    \Re (M_{11} - M_{10}M_{00}^{-1}M_{01})^{-1} &\geq 1/\alpha_{11}\text{.}\notag
  \end{align}

Hence, \Cref{le:realpart-bounded-from-below-implies-invertible} even allows us to explicitly write down the
  inverse of $M$, which we will omit here (cf., however, \Cref{le:inverse-of-t-plus-a}). This means $M^{-1}\in \Lb(\mathcal{H})$.

  Clearly, operator norm balls are closed in the strong operator topology. Hence, we lastly infer
  \begin{equation*}
    \norm{M_{10} M_{00}^{-1}} \leq \alpha_{10}\quad\text{ and }\quad
    \norm{M_{00}^{-1}M_{01}}\leq \alpha_{01}\text{.}\qedhere
  \end{equation*}
\end{proof}

The reason for having introduced the \hyperref[def:nHc]{nonlocal $\mathrm{H}$-topology} is the consideration of homogenisation problems in an operator-theoretic setting. For this, we quickly recall a standard situation for choices of $\mathcal{H}_0$ and $\mathcal{H}_1$, respectively.
\begin{example}\label{ex:Htop}
  Let $\Omega\subseteq\R^3$ a bounded weak Lipschitz domain with continuous boundary.
  Then,
  \begin{align*}
    \rgn & \coloneqq \dset{\grad u}{u\in \Hspace_0^1(\Omega)}, \\
    \rr & \coloneqq \dset{\rot E}{E\in \Lp{2}(\Omega)^3, \rot E\in \Lp{2}(\Omega)^3}, \\
    \rg & \coloneqq \dset{\grad u}{u\in \Hspace^1(\Omega)},\quad\text{and} \\
    \rrn & \coloneqq \dset{\rot E}{\exists (E_n)_n \in \Cc(\Omega)^3\colon E_n\to E\in \Lp{2}(\Omega)^3, \rot E_n\to \rot E \in \Lp{2}(\Omega)^3}
  \end{align*}
  are closed subspaces of $\Lp{2}(\Omega)^3$.
  Indeed, see~\cite{PZ20} for the FA-toolbox establishing closed range results based on selection theorems (see also~\cite[Lemma~4.1]{ElstGordWau2019} for the technique) and~\cite{BPS16,P84} for the Picard--Weber--Weck selection theorem needed.

  Then, the orthogonal decompositions
  \begin{equation*}
    \Lp{2}(\Omega)^3= \rgn\oplus \rr \oplus \mathcal{H}_D(\Omega) = \rg\oplus \rrn \oplus \mathcal{H}_N(\Omega),
  \end{equation*}
  hold, see, e.g., \cite{PW22}, where
  $\mathcal{H}_D(\Omega)$ and $\mathcal{H}_N(\Omega)$ are finite-dimensional subspaces, whose dimensions can be found to be the number of bounded connected components and the number of handles respectively, see \cite{P82,PW22} for the details.
\end{example}

One of the main results of \cite{Wa18} is the identification of the \hyperref[def:nHc]{nonlocal $\mathrm{H}$-topology} as the precise topology describing homogenisation problems. For this, we introduce for $\Omega\subseteq \R^3$ open and $0<\alpha<\beta$
\begin{equation*}
  M(\alpha,\beta;\Omega)
  \coloneqq \dset*{a\in \Lp{\infty}(\Omega)^{3\times 3}}{\Re a(x)\geq \alpha, \Re a(x)^{-1}\geq 1/\beta \ (\text{a.e.}\ x\in \Omega)}.
\end{equation*}
Next, we can identify $M(\alpha,\beta;\Omega)$ as a subspace of $\Lb(\Lp{2}(\Omega)^3)$ and, thus, endow it with the trace topology induced by the \hyperref[def:nHc]{nonlocal $\mathrm{H}$-topology} subject to the decompositions exemplified in \Cref{ex:Htop}. This straightforwardly works for $\Omega$ being \emph{topologically trivial}; that is, for $\mathcal{H}_D(\Omega)= \mathcal{H}_N(\Omega)=\{0\}$. For topologically non-trivial domains, the situation is more involved. This is dealt with in~\cite[Section 5]{Wa22}.
The main result of~\cite{Wa18} shows that \hyperref[def:nHc]{nonlocal $\mathrm{H}$-convergence} indeed generalises $\mathrm{H}$-convergence appropriately and reads as follows.

\begin{theorem}\label{thm:mrCVPDE}
  Let $\Omega\subseteq \R^3$ be a bounded weak Lipschitz domain with continuous boundary. Additionally, assume $\Omega$ to be topologically trivial. Let $0<\alpha<\beta$ and $(a_n)_n$ in $M(\alpha,\beta;\Omega)$, $a\in \Lb(\Lp{2}(\Omega)^{3\times 3})$. Then, the following conditions are equivalent:
  \begin{enumerate}
    \item $a\in M(\alpha,\beta;\Omega)$ and $(a_n)_n$ \emph{$\mathrm{H}$-converges to} $a$; that is, for all $f\in \Hspace^{-1}(\Omega)$ and $u_n\in \Hspace_0^1(\Omega)$ satisfying
          \begin{equation*}
          \scprod{a_n \grad u_n}{\grad \varphi} = f(\varphi)\quad(\varphi\in \Hspace_{0}^{1}(\Omega)),
          \end{equation*}
          we have $u_n\weakto u\in \Hspace_{0}^{1}(\Omega)$ and $a_n\grad u_n\weakto a\grad u \in \Lp{2}(\Omega)^{3}$, where
          \begin{equation*}
          \scprod{a \grad u}{\grad \varphi} = f(\varphi)\quad(\varphi\in \Hspace_{0}^{1}(\Omega));
          \end{equation*}
    \item $(a_n)_n$ converges to $a$ in $\hyperref[def:nHc]{\tau(\rgn,\rr)}$;
    \item $(a_n)_n$ converges to $a$ in $\hyperref[def:nHc]{\tau(\rg,\rrn)}$.
  \end{enumerate}
\end{theorem}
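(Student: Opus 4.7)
The plan is to follow the approach of \cite{Wa18}, recasting H-convergence in the language of the block decomposition of $a$ with respect to $\Lp{2}(\Omega)^3 = \rgn \oplus \rr$ (the harmonic component vanishes under the topological assumption on $\Omega$). As a preliminary, the matrix-ellipticity conditions defining $M(\alpha,\beta;\Omega)$ translate into the operator-theoretic conditions defining $\mathcal{M}(\alpha')$ for some $\alpha'$ depending only on $\alpha,\beta$, using the pointwise bound $\|a\|\leq\beta$ together with the Schur-complement identity $(a^{-1})_{11}=(a_{11}-a_{10}a_{00}^{-1}a_{01})^{-1}$. Since (i) $\Leftrightarrow$ (iii) is entirely analogous to (i) $\Leftrightarrow$ (ii) after swapping to the Helmholtz decomposition $\rg \oplus \rrn$ (Neumann testing in place of Dirichlet), I focus on (i) $\Leftrightarrow$ (ii).

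For (ii) $\Rightarrow$ (i): Given $f \in \Hspace^{-1}(\Omega)$, let $\rho \in \rgn$ be the unique Riesz representative with $f(\varphi) = \scprod{\rho}{\grad\varphi}$ for $\varphi \in \Hspace_0^1(\Omega)$. The defining equation for $u_n$ then reads $a_{n,00}\grad u_n = \rho$ in block form (since $\grad u_n \in \rgn$), so $\grad u_n = a_{n,00}^{-1}\rho$. The first block convergence in (ii) yields $\grad u_n \weakto a_{00}^{-1}\rho =: \grad u$, and hence $u_n \weakto u$ in $\Hspace_0^1(\Omega)$ by Poincaré. The $\rgn$-component of $a_n\grad u_n$ is the fixed $\rho$ and its $\rr$-component equals $(a_{n,10}a_{n,00}^{-1})\rho$, which by the second block convergence tends weakly to $a_{10}a_{00}^{-1}\rho = a_{10}\grad u$. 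Adding, $a_n\grad u_n \weakto a\grad u$, which is H-convergence.

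For (i) $\Rightarrow$ (ii): The first two block convergences are obtained by running the previous argument in reverse: since $\rgn = \grad(\Hspace_0^1(\Omega))$, any $\rho \in \rgn$ arises as the right-hand side of some Dirichlet problem, and H-convergence delivers the required weak limits of $a_{n,00}^{-1}\rho$ and $(a_{n,10}a_{n,00}^{-1})\rho$. For the remaining two blocks, which encode the response to $\rr$-inputs, I would fix $\eta \in \rr$ and analyse the modified problem $\pi_{\rgn}(a_n(\grad u_n + \eta)) = 0$, whose unique solution satisfies $\grad u_n = -a_{n,00}^{-1}a_{n,01}\eta$ and produces the $\rr$-component $(a_{n,11} - a_{n,10}a_{n,00}^{-1}a_{n,01})\eta$. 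Since the effective forcing $-\pi_{\rgn}(a_n\eta)$ depends on $n$, H-convergence is not directly applicable; instead, I would apply the Murat--Tartar div-curl lemma to the pair $(\grad u_n + \eta,\ a_n(\grad u_n + \eta))$, noting that elements of $\rgn$ are curl-free while elements of $\rr$ are divergence-free, to identify the weak limits and conclude.

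Main obstacle: The genuinely delicate step is the second half of (i) $\Rightarrow$ (ii) --- identifying weak limits of products of weakly convergent sequences when the auxiliary forcing depends on $n$. Compensated compactness resolves this, exploiting the compensating structure that the pair $(v_n, w_n) = (\grad u_n + \eta,\ a_n(\grad u_n + \eta))$ lies in the curl-free/divergence-free regime modulo fixed corrections. Coordinating this with the block decomposition and verifying that the identified limit coincides with the operator-valued limit $a$ is the most technical piece of bookkeeping, carried out in detail in \cite{Wa18}.
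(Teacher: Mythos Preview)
The paper does not prove this theorem; it is quoted as ``the main result of \cite{Wa18}'' and stated without proof. So there is no in-paper argument to compare against, and your plan of following \cite{Wa18} is exactly what the authors intend the reader to do.

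Your sketch of (ii) $\Rightarrow$ (i) and of the first two block limits in (i) $\Rightarrow$ (ii) is correct and is essentially the mechanism used in \cite{Wa18}: the variational equation forces $\pi_{\rgn}(a_n\grad u_n)=\rho$, hence $\grad u_n=a_{n,00}^{-1}\rho$, and the Schur-block convergences do the rest. Two points deserve more care, though. First, assertion (i) includes $a\in M(\alpha,\beta;\Omega)$, i.e.\ that the limit is a \emph{local} multiplication operator, not merely an element of $\Lb(\Lp{2}(\Omega)^3)$; your argument for (ii) $\Rightarrow$ (i) establishes the convergence of solutions and fluxes but never shows locality of $a$. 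That step is genuinely nontrivial (it is where compensated compactness enters on the (ii) $\Rightarrow$ (i) side as well) and is one of the substantial contributions of \cite{Wa18}. Second, your claim that (i) $\Leftrightarrow$ (iii) is ``entirely analogous after swapping to Neumann testing'' hides the fact that (i), as stated, is Dirichlet $\mathrm{H}$-convergence; linking it to the Neumann-type decomposition $\rg\oplus\rrn$ requires either the classical boundary-condition independence of $\mathrm{H}$-convergence or a separate argument showing (ii) $\Leftrightarrow$ (iii) for local coefficients. Both routes are available in \cite{Wa18}, but neither is a one-line symmetry.
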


\section{Evolutionary Equations}\label{sec:EEs}

In this section, we briefly summarise the concept of evolutionary equations as introduced by Picard, see \cite{Pi09}; we particularly refer to \cite{SeTrWa22} for a recent monograph on the subject matter. For $\nu>0$ and a Hilbert space $\mathcal{H}$ we define
\begin{equation*}
  \Lp[\nu]{2}(\R;\mathcal{H})\coloneqq \dset*{f\in \Lp[\textup{loc}]{1}(\R;\mathcal{H})}{ \int_\R \norm{f(s)}_{\mathcal{H}}^2 \exp(-2\nu s)\dx[s] <\infty}.
\end{equation*}
The distributional derivative, $\partial_t$, realised as an operator in $\Lp[\nu]{2}(\R;\mathcal{H})$ endowed with the maximal domain is continuously invertible, with explicit inverse given by
\begin{equation*}
   \partial_t^{-1}f(t) = \int_{-\infty}^t f(s) \dx[s]\quad(f\in \Lp[\nu]{2}(\R;\mathcal{H})).
\end{equation*}
We have $\norm{\partial_t}_{\Lb(\Lp[\nu]{2}(\R;\mathcal{H}))}\leq 1/\nu$. The (unitary) Fourier-Laplace transformation, $\mathcal{L}_\nu\colon \Lp[\nu]{2}(\R;\mathcal{H})\to \Lp{2}(\R;\mathcal{H}) $ provides an explicit spectral theorem for $\partial_t$. Indeed, using $\mathrm{m}$ to denote the multiplication-by-argument operator with maximal domain in $\Lp{2}(\R;\mathcal{H})$, we have
\begin{equation*}
  \partial_t = \mathcal{L}_\nu^*(\iu \mathrm{m}+\nu)\mathcal{L}_\nu,
\end{equation*}
where, for compactly supported, continuous $\varphi\colon \R\to \mathcal{H}$,
\begin{equation*}
  \mathcal{L}_\nu \varphi(\xi) = \frac{1}{\sqrt{2\pi}}\int_{\R} \euler^{-(\iu \xi  +\nu)s}\varphi(s)\dx[s] \quad (\xi\in \R).
\end{equation*}
The spectral representation for $\partial_t$ gives rise to a functional calculus. It is enough to consider bounded, holomorphic functions on some right-half planes of $\C$. This leads us to define the notion of material laws and corresponding material law operators:

\begin{definition}\label{def:matlaw}
  We call $M\colon \dom(M)\subseteq \C\to \Lb(\mathcal{H})$ \emph{material law}, if $\dom(M)$ is open, $M$ is holomorphic and if there exists $\nu_0>0$ such that $\C_{\Re>\nu_0}\subseteq \dom(M)$ with
  \begin{equation*}
    \sup_{z\in \C_{\Re>\nu_0}}\norm{M(z)}<\infty\text{.}
  \end{equation*}
  The infimum over all such $\nu_0$ is denoted by $\absbd(M)$, the \emph{abscissa of boundedness}. If $M$ is a material law and $\nu>\absbd(M)$, we define $M(\partial_t) \in \Lb(\Lp[\nu]{2}(\R;\mathcal{H}))$, the \emph{(corresponding) material law operator}, via
  \begin{equation*}
    M(\partial_t)\coloneqq \mathcal{L}_\nu^*M(\iu \mathrm{m}+\nu)\mathcal{L}_\nu,
  \end{equation*}
  where for $\varphi \in \Lp{2}(\R;\mathcal{H})$ we put
  \begin{equation*}
    (M(\iu \mathrm{m}+\nu)\varphi)(\xi)\coloneqq M(\iu \xi +\nu)\varphi(\xi) \quad(\text{a.e.}~\xi\in \R).
    \myqedhere
  \end{equation*}
\end{definition}

The well-posedness theorem for evolutionary equations reads as follows. It is applicable to both classical examples like heat and Maxwell's equations or non-standard ones like time-nonlocal examples from elasticity theory, mixed type equations or equations with coefficients that are nonlocal in space. A closed and densely defined operator in $\mathcal{H}$ can be (canonically) lifted to $\Lp[\nu]{2}(\R;\mathcal{H})$ by pointwise application; this (abstract) multiplication operator will be denoted by the same name. Note that if the initial operator is skew-selfadjoint, so is the lifted one.

\begin{theorem}[{{Picard's Theorem, see, e.g., \cite[Theorem 6.2.1]{SeTrWa22}}}]\label{th:pt}
  Let $\nu>0$, $\mathcal{H}$ a Hilbert space, $A\colon \dom(A)\subseteq \mathcal{H}\to \mathcal{H}$ a skew-selfadjoint operator. Let $M\colon \dom(M)\subseteq \C\to \Lb(\mathcal{H})$ be a material law with $\nu>\absbd(M)$. If, there exists $c>0$ such that for all $z\in \C_{\Re>\nu}$ we have
  \begin{equation*}
    \Re zM(z) \geq c\text{.}
  \end{equation*}
  Then, the operator sum
  \begin{equation*}
    (\partial_t M(\partial_t) + A) \colon \dom(\partial_t M(\partial_t))\cap \dom(A)\subseteq \Lp[\nu]{2}(\R;\mathcal{H})\to \Lp[\nu]{2}(\R;\mathcal{H})
  \end{equation*}
  is closable. Its closure is continuously invertible with
  \begin{equation*}
    \norm{\cl{\partial_t M(\partial_t)+A}^{-1}} \leq \tfrac{1}{c}.
  \end{equation*}
  Moreover, $\mathcal{S}\coloneqq \cl{\partial_t M(\partial_t)+A}^{-1}$ is the material law operator corresponding to the material law
  \begin{equation*}
    z\mapsto (zM(z)+A)^{-1},
  \end{equation*}
  $\mathcal{S}$ is \emph{causal}; that is,
  \begin{equation*}
    \forall a\in \R: \indicator_{(-\infty,a)}\mathcal{S}\indicator_{(-\infty,a)} = \indicator_{(-\infty,a)}\mathcal{S};
  \end{equation*}
  and, if $f\in \dom(\partial_t)$, then $\mathcal{S}f\in \dom(\partial_t) \cap \dom(A)\subseteq \dom(\partial_t M(\partial_t))\cap \dom(A)$.
\end{theorem}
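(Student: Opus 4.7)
The plan is to construct the inverse operator directly through the functional calculus and verify the remaining assertions in the Fourier--Laplace transformed picture, where $\partial_t$ is unitarily equivalent to multiplication by $\iu\mathrm{m}+\nu$. First, for $z\in \C_{\Re>\nu}$, I would combine $\Re zM(z)\geq c$ with skew-selfadjointness of $A$ (so that $\Re \scprod{Ax}{x}=0$ for $x\in \dom(A)$) to get
\begin{equation*}
  \Re \scprod{(zM(z)+A)x}{x} \geq c\norm{x}^{2} \quad (x\in \dom(A)).
\end{equation*}
A standard real-part argument (as in \Cref{le:realpart-bounded-from-below-implies-invertible}) then gives that $zM(z)+A$ is boundedly invertible with $\norm{(zM(z)+A)^{-1}}\leq 1/c$. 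Setting $N(z)\coloneqq (zM(z)+A)^{-1}$, a resolvent-type identity $N(z)-N(w)=N(z)[wM(w)-zM(z)]N(w)$ together with holomorphy of $z\mapsto zM(z)$ shows that $N$ is holomorphic on $\C_{\Re>\nu}$. Hence $N$ is a material law with $\absbd(N)\leq \nu$ and $\sup\norm{N}\leq 1/c$, so $N(\partial_t)\in \Lb(\Lp[\nu]{2}(\R;\mathcal{H}))$ with $\norm{N(\partial_t)}\leq 1/c$.

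Next, I would identify $N(\partial_t)$ with the inverse of the closure of $\partial_t M(\partial_t)+A$. On the transform side, for any $f$ whose Fourier--Laplace transform is compactly supported (a dense subset), the function $\xi\mapsto N(\iu\xi+\nu)(\mathcal{L}_\nu f)(\xi)$ takes values in $\dom(A)$ and satisfies pointwise
\begin{equation*}
  \bigl[(\iu\xi+\nu)M(\iu\xi+\nu)+A\bigr] N(\iu\xi+\nu)(\mathcal{L}_\nu f)(\xi) = (\mathcal{L}_\nu f)(\xi).
\end{equation*}
Transforming back, $N(\partial_t)f \in \dom(\partial_t M(\partial_t))\cap \dom(A)$ and $(\partial_t M(\partial_t)+A)N(\partial_t)f=f$. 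Conversely, for $u\in \dom(\partial_t M(\partial_t))\cap \dom(A)$, the pointwise identity $N(z)(zM(z)+A)=\id$ on $\dom(A)$ together with the commutation of $A$ with the (spectral) multiplication operators yields $N(\partial_t)(\partial_t M(\partial_t)+A)u=u$. This shows the operator sum is injective and that $N(\partial_t)$ is a bounded left- and right-inverse on its natural domain, from which both closability and the identification $\mathcal{S}=N(\partial_t)$ with norm $\leq 1/c$ follow.

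Finally, causality and regularity: since $N$ is bounded and holomorphic on every right-half plane $\C_{\Re>\nu'}$ with $\nu'\geq \nu$, the material law operator $N(\partial_t)$ is $\nu'$-independent on the intersection of the corresponding $\Lp[\nu']{2}$-spaces, and a Paley--Wiener/shift argument (letting $\nu'\to \infty$ on functions supported in $(-\infty,a)$) gives the causality identity $\indicator_{(-\infty,a)}\mathcal{S}\indicator_{(-\infty,a)}=\indicator_{(-\infty,a)}\mathcal{S}$. For the regularity claim, $\partial_t$ commutes with every material law operator on the appropriate domain, so $f\in\dom(\partial_t)$ yields $\partial_t \mathcal{S}f = N(\partial_t)\partial_t f\in \Lp[\nu]{2}(\R;\mathcal{H})$, whence $\mathcal{S}f\in \dom(\partial_t)$; the equation $(\partial_t M(\partial_t)+A)\mathcal{S}f=f$ then gives $A\mathcal{S}f=f-\partial_t M(\partial_t)\mathcal{S}f \in \Lp[\nu]{2}(\R;\mathcal{H})$, so $\mathcal{S}f\in \dom(A)$ as well.

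The main obstacle I expect is the careful handling of the unbounded operator $A$ in the functional calculus: one has to verify that the pointwise resolvent identity lifts to a genuine operator identity on $\Lp[\nu]{2}(\R;\mathcal{H})$, that $\xi\mapsto A N(\iu\xi+\nu)(\mathcal{L}_\nu f)(\xi)$ is measurable and square-integrable (so that $N(\partial_t)f$ actually lies in the maximal domain of the lifted $A$), and that $A$ commutes with the multiplication operators induced by $M(\iu\mathrm{m}+\nu)$ and $(\iu\mathrm{m}+\nu)$ in the sense required to conclude the domain assertion. These measurability and commutation issues are routine but delicate, and constitute the technical core of the argument.
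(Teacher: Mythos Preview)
The paper does not prove this theorem; it is quoted verbatim from \cite[Theorem~6.2.1]{SeTrWa22} and used as a black box, so there is no ``paper's own proof'' to compare against. Your sketch follows the standard route taken in that reference: invert $zM(z)+A$ pointwise via accretivity, check holomorphy of the resolvent, lift to $\Lp[\nu]{2}$ by the Fourier--Laplace functional calculus, and read off causality from boundedness on all larger half-planes. One small correction: for the pointwise invertibility you need \Cref{le:ubd-realpart-bounded-from-below-implies-invertible} (the unbounded version, since $A$ is unbounded), not \Cref{le:realpart-bounded-from-below-implies-invertible}; the latter only treats $A\in\Lb(\mathcal{H})$. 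Otherwise your outline is sound and matches the argument in the cited monograph.
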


\section{Compactness in the Locally Uniform Weak Operator Topology}\label{sec:cluwop}

In this section we consider the space of holomorphic functions that map into the space of bounded operators between two Hilbert spaces. We endow this space with an initial topology that is motivated by the weak operator topology. The new aspect is that we additionally have holomorphic dependence on a complex parameter.

\begin{lemma}\label{th:G-from-operator-to-sesqui-is-bijective}
  Let $U\subseteq\C$ be open and let $\mathcal{X}$, $\mathcal{Y}$ be Hilbert spaces.
  We regard $\Hol(U,\Lb(\mathcal{X},\mathcal{Y}))$ and $\Hol(U,\Ses(\mathcal{X},\mathcal{Y}))$, where $\Lb(\mathcal{X},\mathcal{Y})$ is endowed with the operator norm and the bounded sesquilinear forms $\Ses(\mathcal{X},\mathcal{Y})$ are endowed with the sesquilinear norm, i.e., $\norm{\sigma} \coloneqq \sup_{\norm{\varphi}=1,\norm{\psi}=1} \abs{\sigma(\varphi,\psi)}$ for
  $\sigma\in \Ses(\mathcal{X},\mathcal{Y})$.
  Then, the mapping
  \begin{align}\label{eq:isomorphism-between-Hol-Lb-and-Hol-Ses}
    G \colon\left\{
    \begin{array}{rcl}
      \Hol(U,\Lb(\mathcal{X},\mathcal{Y})) & \to & \Hol(U,\Ses(\mathcal{X},\mathcal{Y})), \\
      f & \mapsto & z \mapsto \scprod{f(z)\cdot}{\cdot}_{\mathcal{Y}}.
    \end{array}
    \right.
  \end{align}
  is bijective and linear.
\end{lemma}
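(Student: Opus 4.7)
The plan is to reduce bijectivity and linearity of $G$ to the pointwise Riesz representation theorem together with the fact that composition with a bounded linear map preserves Banach-space-valued holomorphy.

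First, linearity is immediate from the definition: for $f, g \in \Hol(U,\Lb(\mathcal{X},\mathcal{Y}))$ and scalars, the sesquilinear form $\scprod{(\alpha f + \beta g)(z)\varphi}{\psi}_{\mathcal{Y}}$ decomposes linearly. Next, I would introduce the pointwise Riesz map
\begin{equation*}
  R \colon \Lb(\mathcal{X},\mathcal{Y}) \to \Ses(\mathcal{X},\mathcal{Y}), \qquad T \mapsto \scprod{T\cdot}{\cdot}_{\mathcal{Y}},
\end{equation*}
and observe that, by the Riesz representation theorem applied pointwise in the second argument, $R$ is an isometric isomorphism of Banach spaces (the norm equality $\norm{R(T)} = \norm{T}$ is the definition of the operator norm on $\Lb(\mathcal{X},\mathcal{Y})$, and the inverse assigns to each $\sigma \in \Ses(\mathcal{X},\mathcal{Y})$ the unique operator $T$ with $\sigma = \scprod{T\cdot}{\cdot}$). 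In particular, $R$ is continuous and linear, hence so is $R^{-1}$.

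With this in hand, observe that $G(f) = R \circ f$. Since composition of a Banach-space-valued holomorphic function with a bounded linear operator between the target Banach spaces is again holomorphic, $R \circ f$ lies in $\Hol(U,\Ses(\mathcal{X},\mathcal{Y}))$; this shows $G$ is well-defined. Injectivity follows by the pointwise injectivity of $R$: if $G(f)(z) = 0$ for all $z$, then $f(z) = R^{-1}(0) = 0$ for all $z$. For surjectivity, given $s \in \Hol(U,\Ses(\mathcal{X},\mathcal{Y}))$, set $f \coloneqq R^{-1} \circ s$; the same principle (composition with the bounded linear map $R^{-1}$ preserves holomorphy) gives $f \in \Hol(U,\Lb(\mathcal{X},\mathcal{Y}))$, and by construction $G(f) = s$.

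There is no real obstacle here; the only point requiring care is to note that the underlying isomorphism $R$ is an isometry of \emph{Banach} spaces with the norms specified in the lemma, so that holomorphy of $f$ in operator norm and holomorphy of $G(f)$ in the sesquilinear norm are transferred back and forth verbatim via continuity of $R$ and $R^{-1}$, without any need to pass through weak notions of holomorphy.
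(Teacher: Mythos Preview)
Your proposal is correct and follows essentially the same approach as the paper: both reduce the statement to the fact that the pointwise map $R\colon \Lb(\mathcal{X},\mathcal{Y})\to\Ses(\mathcal{X},\mathcal{Y})$ is a linear isometric isomorphism, so that holomorphy transfers in both directions. The only cosmetic difference is that the paper writes out the difference-quotient computation explicitly (showing $G(f)'=G(f')$ via the isometry), whereas you invoke the general principle that post-composition with a bounded linear map preserves holomorphy; the paper's computation is precisely the proof of that principle in this instance.
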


\begin{proof}
  It is well-known that mapping an operator to the corresponding sesquilinear form is a linear and isometric bijection from $\Lb(\mathcal{X},\mathcal{Y})$ to
  $\Ses(\mathcal{X},\mathcal{Y})$. Hence, $G$ is a linear bijection from $\Lb(\mathcal{X},\mathcal{Y})^{U}$ to $\Ses(\mathcal{X},\mathcal{Y})^{U}$.

  Let $f \in \Hol(U,\Lb(\mathcal{X},\mathcal{Y}))$. Then the complex derivative $f'$ exists. Thus,
  \begin{align*}
    \MoveEqLeft[7]
    \lim_{w \to z} \norm*{\frac{G(f)(z) - G(f)(w)}{z-w} - G(f')(z)}_{\Ses(\mathcal{X},\mathcal{Y})} \\
    &= \lim_{w \to z} \norm*{\frac{G(f)(z) - G(f)(w) - G(f')(z)(z-w)}{z-w}}_{\Ses(\mathcal{X},\mathcal{Y})} \\
    &= \lim_{w \to z} \norm*{G\Big(\frac{f(z) - f(w) - f'(z)(z-w)}{z-w}\Big)}_{\Ses(\mathcal{X},\mathcal{Y})} \\
    &= \lim_{w \to z} \norm*{\frac{f(z) - f(w)}{z-w} - f'(z)}_{\Lb(\mathcal{X},\mathcal{Y})} = 0,
  \end{align*}
  which implies $G(f)\in \Hol(U,\Ses(\mathcal{X},\mathcal{Y}))$ with $G(f)'(z) = G(f')(z)$.
  For $\sigma \in \Hol(U,\Ses(\mathcal{X},\mathcal{Y}))$ we can analogously prove $G^{-1}(\sigma)\in \Hol(U,\Lb(\mathcal{X},\mathcal{Y}))$ with $G^{-1}(\sigma)'(z)=G^{-1}(\sigma')(z)$.
\end{proof}


\begin{definition}\label{def:locally-uniform-weak-operator-topology}
  Let $U \subseteq \C$ be an open set and $\mathcal{X}$, $\mathcal{Y}$ Hilbert spaces. For every $\varphi \in \mathcal{X}$ and $\psi \in \mathcal{Y}$ we define
  (cf.\ \Cref{re:compact-convergence-topology-properties})
  \begin{equation*}
    \Lambda_{\varphi,\psi}\colon
    \left\{
      \begin{array}{rcl}
       \Hol(U,\Lb(\mathcal{X},\mathcal{Y})) & \to & \Hol(U,\C), \\
       f & \mapsto & \scprod{f(\cdot)\varphi}{\psi}_{\mathcal{Y}},
      \end{array}
    \right.
  \end{equation*}
  and
  \begin{equation*}
    \tilde{\Lambda}_{\varphi,\psi}\colon
    \left\{
      \begin{array}{rcl}
       \Hol(U,\Ses(\mathcal{X},\mathcal{Y})) & \to & \Hol(U,\C), \\
       \sigma & \mapsto & \sigma(\cdot)(\varphi,\psi).
      \end{array}
    \right.
  \end{equation*}
  Moreover, we define the \emph{locally uniform weak operator topology} on $\Hol(U,\Lb(\mathcal{X},\mathcal{Y}))$ as the initial topology  w.r.t.\ the mappings $\Lambda_{\varphi,\psi}$ for $\varphi \in \mathcal{X}$ and $\psi \in \mathcal{Y}$ and denote it by $\mathcal{T}_{\Lambda}$. Analogously, we denote the initial topology on $\Hol(U,\Ses(\mathcal{X},\mathcal{Y}))$ w.r.t.\ the mappings $\tilde{\Lambda}_{\varphi,\psi}$ for all $\varphi \in \mathcal{X}$ and $\psi \in \mathcal{Y}$ by $\mathcal{T}_{\tilde{\Lambda}}$.
\end{definition}

\begin{remark}\label{re:loc-unif-wot-is-tvs-and-hausdorff}
The mappings $\Lambda_{\varphi,\psi}$ are linear and $\bigcap\ker\Lambda_{\varphi,\psi}=\{0\}$. Therefore, the corresponding initial topology
  $\mathcal{T}_{\Lambda}$ is Hausdorff and makes $\Hol(U,\Lb(\mathcal{X},\mathcal{Y}))$ a topological vector space.
\end{remark}

\begin{lemma}\label{th:G-from-operator-to-sesqui-is-isomorphism}
  Let $G\colon \Hol(U,\Lb(\mathcal{X},\mathcal{Y})) \to \Hol(U,\Ses(\mathcal{X},\mathcal{Y}))$ be the linear bijection from~\eqref{eq:isomorphism-between-Hol-Lb-and-Hol-Ses}. Then $G \colon (\Hol(U,\Lb(\mathcal{X},\mathcal{Y})),\mathcal{T}_{\Lambda}) \to (\Hol(U,\Ses(\mathcal{X},\mathcal{Y})),\mathcal{T}_{\tilde{\Lambda}})$ is a linear homeomorphism.
\end{lemma}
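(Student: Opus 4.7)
The plan is to exploit the universal property of initial topologies. By \Cref{th:G-from-operator-to-sesqui-is-bijective}, the map $G$ is already known to be linear and bijective, so the entire content of the claim is the continuity of $G$ and $G^{-1}$. Since both topologies $\mathcal{T}_{\Lambda}$ and $\mathcal{T}_{\tilde{\Lambda}}$ are defined as initial topologies with respect to families of evaluation maps indexed by the same pairs $(\varphi,\psi)\in \mathcal{X}\times\mathcal{Y}$, everything reduces to matching these evaluations on the two sides.

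Concretely, I would observe that for every $f\in \Hol(U,\Lb(\mathcal{X},\mathcal{Y}))$ and $\varphi\in\mathcal{X}$, $\psi\in\mathcal{Y}$,
\begin{equation*}
  \tilde{\Lambda}_{\varphi,\psi}(G(f))=G(f)(\cdot)(\varphi,\psi)=\scprod{f(\cdot)\varphi}{\psi}_{\mathcal{Y}}=\Lambda_{\varphi,\psi}(f),
\end{equation*}
so $\tilde{\Lambda}_{\varphi,\psi}\circ G=\Lambda_{\varphi,\psi}$. Since $\Lambda_{\varphi,\psi}$ is continuous with respect to $\mathcal{T}_{\Lambda}$ by definition, the universal property of the initial topology $\mathcal{T}_{\tilde{\Lambda}}$ yields continuity of $G$. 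Applying the same identity read as $\Lambda_{\varphi,\psi}\circ G^{-1}=\tilde{\Lambda}_{\varphi,\psi}$ and invoking the universal property of $\mathcal{T}_{\Lambda}$ gives continuity of $G^{-1}$.

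There is essentially no obstacle here: the statement is a soft consequence of the way the two topologies were set up, mediated by the pointwise identification of an operator with its sesquilinear form. The only thing to double-check is that the evaluation maps really correspond under $G$ pointwise in $z\in U$, but this is immediate from the definition of $G$ in \eqref{eq:isomorphism-between-Hol-Lb-and-Hol-Ses}. Consequently the proof will fit in a few lines and needs no appeal to the holomorphic structure beyond what \Cref{th:G-from-operator-to-sesqui-is-bijective} already provides.
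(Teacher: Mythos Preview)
Your proposal is correct and follows essentially the same approach as the paper: both establish the identity $\tilde{\Lambda}_{\varphi,\psi}\circ G=\Lambda_{\varphi,\psi}$ and then invoke the universal property (equivalently, transitivity) of initial topologies to conclude that $G$ is a homeomorphism.
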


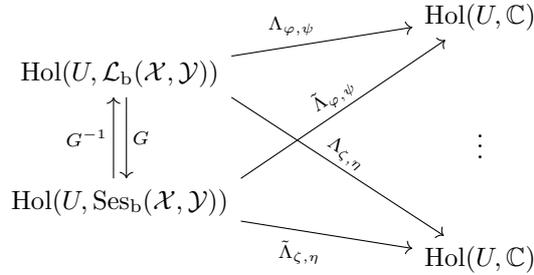
\begin{figure}[h]
  \centering
  \begin{tikzcd}[row sep=1ex,column sep=15ex]
    & \Hol(U,\C) \\
    \Hol(U,\Lb(\mathcal{X},\mathcal{Y})) \ar[shift left]{dd}{G} \ar{ur}{\Lambda_{\varphi,\psi}} \ar[start anchor=south east]{dddr}[sloped]{\Lambda_{\zeta,\eta}} & \\
    & \vdots \\
    \Hol(U,\Ses(\mathcal{X},\mathcal{Y})) \ar[shift left]{uu}{G^{-1}}\ar{dr}[swap]{\tilde{\Lambda}_{\zeta,\eta}} \ar[start anchor=north east]{uuur}[sloped]{\tilde{\Lambda}_{\varphi,\psi}}& \\
    & \Hol(U,\C)
  \end{tikzcd}
  \caption{Initial topology on $\Hol(U,\Lb(\mathcal{X},\mathcal{Y}))$ and $\Hol(U,\Ses(\mathcal{X},\mathcal{Y}))$}
  \label{fig:illustration-initial-topology-on-Hol-spaces}
\end{figure}

\begin{proof}
  By definition of $G$, $\Lambda_{\varphi,\psi}$ and $\tilde{\Lambda}_{\varphi,\psi}$ we can immediately see that $\tilde{\Lambda}_{\varphi,\psi} \circ G = \Lambda_{\varphi,\psi}$ and since $G$ is invertible we also have $\tilde{\Lambda}_{\varphi,\psi} = \Lambda_{\varphi,\psi} \circ G^{-1}$. Hence, the diagram in \Cref{fig:illustration-initial-topology-on-Hol-spaces} commutes. Since initial topologies are transitive, we conclude that $G$ is a homeomorphism.
\end{proof}

\begin{lemma}\label{th:very-weak-holomorphy-for-Lb-and-Ses-implies-holomorphy}
  A function $\sigma\colon U \to \Ses(\mathcal{X},\mathcal{Y})$ is holomorphic if and only if $\sigma(\cdot)(\varphi,\psi)$ is holomorphic for all $\varphi \in \mathcal{X}$ and $\psi \in \mathcal{Y}$.
\end{lemma}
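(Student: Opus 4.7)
The forward direction is routine: for any fixed $\varphi\in\mathcal{X}$, $\psi\in\mathcal{Y}$, the evaluation functional $\Ses(\mathcal{X},\mathcal{Y})\ni\tau\mapsto\tau(\varphi,\psi)\in\C$ is continuous and linear (bounded by $\norm{\varphi}\norm{\psi}$), so composing it with a holomorphic $\sigma\colon U\to\Ses(\mathcal{X},\mathcal{Y})$ yields a scalar holomorphic function. The substance is the converse, and my plan is to reduce it to a standard weak-to-norm holomorphy argument in the spirit of Dunford's theorem, split into (i) local uniform boundedness and (ii) a Cauchy-integral construction of the derivative in the $\Ses$-norm.

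For (i), fix $z_0\in U$ and a closed disk $\cl{\ball(z_0,r)}\subseteq U$. For each pair $(\varphi,\psi)$ the scalar holomorphic function $z\mapsto\sigma(z)(\varphi,\psi)$ is continuous, hence bounded on $\cl{\ball(z_0,r)}$. I would then apply the uniform boundedness principle twice. First, for fixed $\psi\in\mathcal{Y}$, the family of continuous linear functionals $\{\sigma(z)(\cdot,\psi):z\in\cl{\ball(z_0,r)}\}$ on $\mathcal{X}$ is pointwise bounded, so it is uniformly bounded by some $C(\psi)$. Second, the family of continuous antilinear functionals $\{\sigma(z)(\varphi,\cdot):z\in\cl{\ball(z_0,r)},\norm{\varphi}\leq 1\}$ on $\mathcal{Y}$ is now pointwise bounded (by $C(\psi)$), hence uniformly bounded. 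This yields $\sup_{z\in\cl{\ball(z_0,r)}}\norm{\sigma(z)}_{\Ses(\mathcal{X},\mathcal{Y})}<\infty$.

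For (ii), shrink to a slightly smaller disk and, for $z$ in its interior, define the candidate derivative by
\begin{equation*}
  \sigma'(z)(\varphi,\psi)\coloneqq\frac{1}{2\pi\iu}\oint_{\abs{w-z_0}=r}\frac{\sigma(w)(\varphi,\psi)}{(w-z)^2}\dx[w].
\end{equation*}
Since the integrand is sesquilinear in $(\varphi,\psi)$ and uniformly bounded on the contour thanks to (i), $\sigma'(z)$ defines an element of $\Ses(\mathcal{X},\mathcal{Y})$ with norm control independent of $\varphi,\psi$. Applying the scalar Cauchy integral formula pointwise to the difference quotient and subtracting $\sigma'(z)$ gives
\begin{equation*}
  \Bigl(\frac{\sigma(z+h)-\sigma(z)}{h}-\sigma'(z)\Bigr)(\varphi,\psi)=\frac{h}{2\pi\iu}\oint_{\abs{w-z_0}=r}\frac{\sigma(w)(\varphi,\psi)}{(w-z)^2(w-z-h)}\dx[w],
\end{equation*}
whose modulus is bounded by $\abs{h}\cdot K\cdot\norm{\varphi}\norm{\psi}$ via the uniform estimate from (i), with $K$ independent of $\varphi,\psi$. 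Taking the sesquilinear-norm supremum over unit balls in $\mathcal{X}$ and $\mathcal{Y}$ yields $\norm{h^{-1}(\sigma(z+h)-\sigma(z))-\sigma'(z)}_{\Ses(\mathcal{X},\mathcal{Y})}\to 0$ as $h\to 0$, i.e., $\sigma$ is norm-differentiable at $z$.

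The main obstacle is step (i): one has to be careful that sesquilinearity (linear in the first, antilinear in the second argument) is handled correctly when invoking Banach--Steinhaus, but a single antilinear reflection or conjugation makes each family a genuine family of bounded linear functionals, after which the principle applies verbatim. Once local uniform boundedness is in hand, the Cauchy-integral construction of $\sigma'$ is entirely mechanical and the conclusion follows.
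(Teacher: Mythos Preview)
Your argument is correct. Both you and the paper hinge on the same core idea---a double application of the uniform boundedness principle to upgrade pointwise boundedness of $\sigma(z)(\varphi,\psi)$ to local uniform boundedness of $\norm{\sigma(z)}_{\Ses}$---but the packaging differs. The paper does not carry out the Cauchy-integral construction of the derivative by hand; instead it invokes the isometric bijection $G$ between $\Hol(U,\Lb(\mathcal{X},\mathcal{Y}))$ and $\Hol(U,\Ses(\mathcal{X},\mathcal{Y}))$ (\Cref{th:G-from-operator-to-sesqui-is-bijective}) and then appeals to a general weak-implies-strong holomorphy theorem for Banach-space-valued functions (\Cref{th:weak-is-strong-holomorphy}), for which the only hypothesis to verify is that the functionals $A\mapsto\scprod{A\varphi}{\psi}$ detect boundedness in $\Lb(\mathcal{X},\mathcal{Y})$. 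That verification is precisely your step (i), carried out in $\Lb$ rather than in $\Ses$. What your approach buys is self-containment: you never leave $\Ses(\mathcal{X},\mathcal{Y})$ and you do not need an external black-box theorem. What the paper's approach buys is economy---once \Cref{th:weak-is-strong-holomorphy} is available, the entire step (ii) is absorbed into it, and the proof reduces to a few lines of Banach--Steinhaus.
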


\begin{proof}
  In view of \Cref{th:G-from-operator-to-sesqui-is-bijective} and \Cref{th:weak-is-strong-holomorphy}, it suffices to prove that a set of operators
  $\mathcal{B}\subseteq \Lb(\mathcal{X},\mathcal{Y})$ is bounded (w.r.t.\ the operator norm) if and only if
  \begin{equation}\label{eq:weakly-bounded-set-of-bounded-operators}
    \forall\varphi\in\mathcal{X}\forall\psi\in\mathcal{Y}:\sup_{A\in \mathcal{B}}\abs{\scprod{A\varphi}{\psi}_{\mathcal{Y}}}<\infty\text{.}
  \end{equation}
  Obviously, boundedness of $\mathcal{B}$ implies~\eqref{eq:weakly-bounded-set-of-bounded-operators}.

  Conversely, we can keep $\varphi\in\mathcal{X}$ fixed and write $\iota_{A\varphi}\in\Lb(\mathcal{Y},\C)$ for the functional
  with Riesz representation $A\varphi\in\mathcal{Y}$, i.e., $\psi\mapsto \scprod{\psi}{A\varphi}_{\mathcal{Y}}$.
  Then,~\eqref{eq:weakly-bounded-set-of-bounded-operators} and the uniform boundedness principle yield
  \begin{equation*}
    \forall \psi\in\mathcal{Y}:\sup_{A\in \mathcal{B}}\abs{\iota_{A\varphi}(\psi)}<\infty \implies\sup_{A\in \mathcal{B}}\norm{A\varphi}_{\mathcal{Y}}<\infty\text{.}
  \end{equation*}
  Since this holds true for every $\varphi\in\mathcal{X}$, another iteration of the uniform boundedness principle shows that $\mathcal{B}$ is bounded.
\end{proof}

Similarly to Banach-Alaoglu's theorem (for the weak operator topology) we show that 
\begin{equation*}
K\coloneqq \dset{\sigma \in \Hol(U,\Ses(\mathcal{X},\mathcal{Y}))}{\forall z\in U:\norm{\sigma(z)} \leq 1}
\end{equation*} (the analogue of the closed ball in our setting) is compact.

\begin{theorem}\label{th:alaoglu-for-holomorphic-sesquilinear-forms}
  $K$ is a $\mathcal{T}_{\tilde{\Lambda}}$-compact subset of $\Hol(U,\Ses(\mathcal{X},\mathcal{Y}))$.
  If both $\mathcal{X}$ and $\mathcal{Y}$ are separable, then $(K,\mathcal{T}_{\tilde{\Lambda}})$ is metrisable.
\end{theorem}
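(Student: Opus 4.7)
The plan is to adapt the classical Banach--Alaoglu argument by embedding $K$ into a product of compacta and invoking Tychonoff's theorem, with Montel's theorem taking the role played by compactness of closed balls in $\C$.

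For each $(\varphi,\psi) \in \mathcal{X} \times \mathcal{Y}$ and $\sigma \in K$ one has $\abs{\sigma(z)(\varphi,\psi)} \leq \norm{\varphi}\norm{\psi}$ for all $z \in U$. I would introduce
\begin{equation*}
   M_{\varphi,\psi} \coloneqq \dset{f \in \Hol(U,\C)}{\forall z \in U:\abs{f(z)} \leq \norm{\varphi}\norm{\psi}},
\end{equation*}
which is compact in $\Hol(U,\C)$ with respect to locally uniform convergence by Montel's theorem (uniformly bounded families of holomorphic functions are normal, and the set is closed under locally uniform limits). The product $P \coloneqq \prod_{(\varphi,\psi)} M_{\varphi,\psi}$ is then compact by Tychonoff. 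Consider the evaluation map $E\colon K \to P$, $\sigma \mapsto (\tilde{\Lambda}_{\varphi,\psi}(\sigma))_{(\varphi,\psi)}$. Since $\mathcal{T}_{\tilde{\Lambda}}$ is by construction the initial topology with respect to the $\tilde{\Lambda}_{\varphi,\psi}$, the universal property of initial topologies ensures that $E$ is a topological embedding onto its image in $P$.

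Next, I would verify that $E(K)$ is closed in $P$. Let $(\sigma_\alpha)_\alpha$ be a net in $K$ with $E(\sigma_\alpha) \to (f_{\varphi,\psi})_{(\varphi,\psi)}$ in $P$; in particular, $\sigma_\alpha(z)(\varphi,\psi) \to f_{\varphi,\psi}(z)$ for each $z \in U$. Define $\tau(z)\colon (\varphi,\psi) \mapsto f_{\varphi,\psi}(z)$. As a pointwise limit of sesquilinear forms of norm at most $1$, $\tau(z)$ is itself sesquilinear with $\norm{\tau(z)} \leq 1$, hence $\tau(z) \in \Ses(\mathcal{X},\mathcal{Y})$. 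Each scalar function $\tau(\cdot)(\varphi,\psi) = f_{\varphi,\psi}$ being holomorphic, \Cref{th:very-weak-holomorphy-for-Lb-and-Ses-implies-holomorphy} yields $\tau \in \Hol(U,\Ses(\mathcal{X},\mathcal{Y}))$, and consequently $\tau \in K$ with $E(\tau) = (f_{\varphi,\psi})$. Combined with the embedding step, this gives compactness of $K$.

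For the metrisability assertion, choose countable dense subsets $\set{\varphi_n}_{n\in\N} \subseteq \mathcal{X}$ and $\set{\psi_m}_{m\in\N} \subseteq \mathcal{Y}$, and let $\mathcal{T}'$ denote the initial topology on $K$ generated by the countable family $\tilde{\Lambda}_{\varphi_n,\psi_m}$, $n,m \in \N$. Since $\Hol(U,\C)$ with the topology of locally uniform convergence is a Fréchet space, $\mathcal{T}'$ embeds into a countable product of metrisable spaces and is therefore pseudometrisable; sesquilinearity together with density imply that this countable family separates points on $K$, so $\mathcal{T}'$ is in fact Hausdorff and hence metrisable. As $\mathcal{T}'$ is Hausdorff and coarser than the compact topology $\mathcal{T}_{\tilde{\Lambda}}|_K$, the identity $(K,\mathcal{T}_{\tilde{\Lambda}}|_K) \to (K,\mathcal{T}')$ is a continuous bijection from a compact to a Hausdorff space and thus a homeomorphism. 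The main obstacle is the closedness step, where one must verify that the putative limit $\tau$ is not merely coordinatewise holomorphic but assembles into a genuinely $\Ses(\mathcal{X},\mathcal{Y})$-valued holomorphic function; this is precisely the gap bridged by \Cref{th:very-weak-holomorphy-for-Lb-and-Ses-implies-holomorphy}.
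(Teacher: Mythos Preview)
Your proposal is correct and follows essentially the same route as the paper: embed $K$ into the Tychonoff product $\prod_{(\varphi,\psi)}\Hol(U,\cl{\ball}_{\norm{\varphi}\norm{\psi}}(0))$ via the evaluation map, use Montel plus Tychonoff for compactness of the ambient product, verify closedness of the image by reconstructing a sesquilinear-form-valued limit and invoking \Cref{th:very-weak-holomorphy-for-Lb-and-Ses-implies-holomorphy}, and handle metrisability by passing to a countable dense index set and using the compact-to-Hausdorff bijection trick. The only cosmetic difference is that the paper first shows the image of all of $\Hol(U,\Ses(\mathcal{X},\mathcal{Y}))$ is closed in the full product and then intersects with the compact box, whereas you argue closedness of $E(K)$ directly inside $P$; both are equivalent here.
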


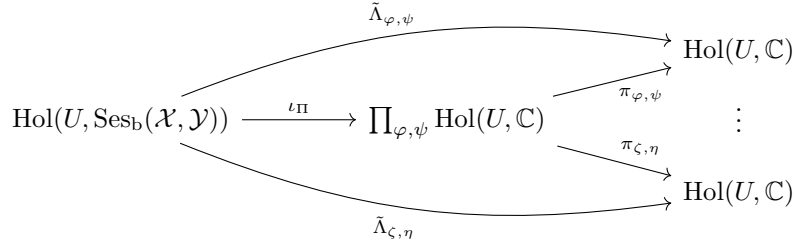
\begin{figure}[h]
  \centering
  \begin{tikzcd}[row sep=2ex,column sep=10ex]
    & & \Hol(U,\C)\\
    \Hol(U,\Ses(\mathcal{X},\mathcal{Y})) \ar{r}{\iota_{\Pi}} \ar[bend left=15]{urr}{\tilde{\Lambda}_{\varphi,\psi}} \ar[bend right=15]{drr}[swap]{\tilde{\Lambda}_{\zeta,\eta}} & \prod_{\varphi,\psi} \Hol(U,\C) \ar{dr}{\pi_{\zeta,\eta}} \ar{ur}[swap]{\pi_{\varphi,\psi}} & \smash{\raisebox{-0.5ex}{\vdots}}\\
    & & \Hol(U,\C)
  \end{tikzcd}
  \caption{Initial topology on $\Hol(U,\Ses(\mathcal{X},\mathcal{Y}))$}
  \label{fig:initial-topology-Hol-Ses-associative}
\end{figure}

\begin{proof}
  We define the following mappings
  \begin{align*}
    \iota_{\Pi}\colon
    \left\{ \renewcommand*{\arraystretch}{1.5}
    \begin{array}{rcl}
     \Hol(U,\Ses(\mathcal{X},\mathcal{Y})) & \to & \displaystyle\prod_{(\varphi,\psi) \in \mathcal{X} \times \mathcal{Y}} \Hol(U,\C), \\
     \sigma & \mapsto & \big(\sigma(\cdot)(\varphi,\psi)\big)_{(\varphi,\psi) \in \mathcal{X} \times \mathcal{Y}},
    \end{array}
    \right.
  \end{align*}
  and
  \begin{align*}
    \pi_{\zeta,\eta}\colon
    \left\{ \renewcommand*{\arraystretch}{1.5}
    \begin{array}{rcl}
     \displaystyle\prod_{(\varphi,\psi) \in \mathcal{X} \times \mathcal{Y}} \Hol(U,\C) & \to & \Hol(U,\C), \\
     \big(f(\cdot)(\varphi,\psi)\big)_{(\varphi,\psi) \in \mathcal{X} \times \mathcal{Y}} & \mapsto & f(\cdot)(\zeta,\eta).
    \end{array}
    \right.
  \end{align*}
  We can immediately see that $\tilde{\Lambda}_{\varphi,\psi} = \pi_{\varphi,\psi} \circ \iota_{\Pi}$.
  Hence, the diagram in \Cref{fig:initial-topology-Hol-Ses-associative} is commutative. If we endow $\prod_{(\varphi,\psi)} \Hol(U,\C)$ with the product topology, i.e., the initial
  topology w.r.t.\ the $\pi_{\zeta,\eta}$ and $\iota_{\Pi}(\Hol(U,\Ses(\mathcal{X},\mathcal{Y})))\subseteq \prod_{(\varphi,\psi)} \Hol(U,\C)$ with the trace topology, the transitivity
  of initial topologies and the commutativity of the diagram imply that $\iota_{\Pi}$ is a homeomorphism onto its range.

  Therefore, $K$ is compact in $\Hol(U,\Ses(\mathcal{X},\mathcal{Y}))$, if and only if $\iota_{\Pi}(K)$ is compact in $\prod_{(\varphi,\psi)}\Hol(U,\C)$.

  First, we show that $\iota_{\Pi}\big(\Hol(U,\Ses(\mathcal{X},\mathcal{Y}))\big)$ is closed in $\prod_{(\varphi,\psi)} \Hol(U,\C)$:
  Let $(\sigma_{i})_{i \in I}$ be a net in $\Hol(U,\Ses(\mathcal{X},\mathcal{Y}))$ such that $(\iota_{\Pi} \sigma_{i})_{i\in I}$ converges to \linebreak $f\in \prod_{(\varphi,\psi)} \Hol(U,\C)$ (w.r.t.\ the product topology). The
  sesquilinearity of the $\sigma_{i}$ exactly means
  \begin{multline*}
    \sigma_{i}(\cdot)(\varphi_{1} + \alpha \varphi_{2},\psi_{1} + \beta \psi_{2}) \\
    = \sigma_{i}(\cdot)(\varphi_{1},\psi_{1})
    + \alpha \sigma_{i}(\cdot)(\varphi_{2},\psi_{1}) + \conj{\beta}\sigma_{i}(\cdot)(\varphi_{1},\psi_{2})
    + \alpha \conj{\beta} \sigma_{i}(\cdot)(\varphi_{2},\psi_{2})
  \end{multline*}
  or equivalently
  \begin{align*}
    \pi_{\varphi_{1}+\alpha\varphi_{2},\psi_{1}+\beta\psi_{2}}\iota_{\Pi}\sigma_{i}
    = \pi_{\varphi_{1},\psi_{1}} \iota_{\Pi} \sigma_{i}
    + \alpha \pi_{\varphi_{2},\psi_{1}} \iota_{\Pi}\sigma_{i} + \conj{\beta} \pi_{\varphi_{1},\psi_{2}} \iota_{\Pi}\sigma_{i}
    + \alpha \conj{\beta} \pi_{\varphi_{2},\psi_{2}} \iota_{\Pi}\sigma_{i}
  \end{align*}
   for all $\varphi_{1},\varphi_{2} \in \mathcal{X}$, $\psi_{1},\psi_{2} \in \mathcal{Y}$ and $\alpha, \beta \in \C$.
   By continuity of the projections $\pi_{\zeta,\eta}$ we conclude that the last identity also holds if we replace $\iota_{\Pi}\sigma_{i}$ by its limit $f$, i.e.,
   $f(z)$ is sesquilinear for all $z\in U$.
  Therefore, \Cref{th:very-weak-holomorphy-for-Lb-and-Ses-implies-holomorphy} implies that $\iota_{\Pi}^{-1} f \in \Hol(U,\Ses(\mathcal{X},\mathcal{Y}))$ exists. Hence, $\iota_{\Pi}\big(\Hol(U,\Ses(\mathcal{X},\mathcal{Y}))\big)$ is closed in $\prod_{(\varphi,\psi)} \Hol(U,\C)$.

  It is straightforward to show
  \begin{equation*}
    \iota_{\Pi}(K) = \prod_{(\varphi,\psi) \in \mathcal{X} \times \mathcal{Y}} \Hol(U,\cl{\ball}_{\norm{\varphi}_{\mathcal{X}} \norm{\psi}_{\mathcal{Y}}}(0)) \cap \iota_{\Pi}\big(\Hol(U,\Ses(\mathcal{X},\mathcal{Y}))\big).
  \end{equation*}

  Note that by \Cref{th:Hol-U-ball-compact} $\Hol(U,\cl{\ball}_{\norm{\varphi}\norm{\psi}}(0))$ is compact for every $\varphi \in \mathcal{X}$ and $\psi \in \mathcal{Y}$. By Tychonoff's theorem $\prod_{(\varphi,\psi)} \Hol(U,\cl{\ball}_{\norm{\varphi}\norm{\psi}}(0))$ is compact and therefore $\iota_{\Pi}(K)$ is compact as the intersection of a compact and a closed set. This finally implies the compactness of $K$.

  In the separable case, let $X\subseteq\mathcal{X}$ and $Y\subseteq\mathcal{Y}$ be countable and dense. Then, we replace
  \begin{equation*}
    \prod_{(\varphi,\psi) \in \mathcal{X} \times \mathcal{Y}} \Hol(U,\C)
  \quad\text{ with }\quad
    \prod_{(\varphi,\psi) \in X \times Y} \Hol(U,\C)
  \end{equation*}
  in the definition of $\iota_{\Pi}$, and we only consider $\pi_{\zeta,\eta}$ and $\tilde{\Lambda}_{\varphi,\psi}$ with $\zeta\in X$ and $\eta\in Y$ in \Cref{fig:initial-topology-Hol-Ses-associative}.
  This gives rise to a new topology $\mathcal{T}_{\aleph_{0}}$ on $\Hol(U,\Ses(\mathcal{X},\mathcal{Y}))$.
  Since two
  bounded sesquilinear forms that coincide on $X\times Y$ also coincide on $\mathcal{X}\times\mathcal{Y}$, we still have that $\iota_{\Pi}$ is a homeomorphism onto its range w.r.t.\ $\mathcal{T}_{\aleph_{0}}$.
  Thus, $\prod_{(\varphi,\psi)}\Hol(U,\C)$ being metrisable as the countable product of metrisable spaces (cf.\ \Cref{re:compact-convergence-topology-properties}) implies that
  $(\Hol(U,\Ses(\mathcal{X},\mathcal{Y})),\mathcal{T}_{\aleph_{0}})$ is metrisable. The identity operator
  \begin{equation*}
    \operatorname{id}\colon (K,\mathcal{T}_{\tilde{\Lambda}})\to
    (K,\mathcal{T}_{\aleph_{0}})
  \end{equation*}
  clearly is continuous and bijective. Moreover, we have just shown that its domain is compact and that
  its codomain is metrisable and hence Hausdorff. Therefore, $\operatorname{id}$ is a
  homeomorphism implying that $(K,\mathcal{T}_{\tilde{\Lambda}})$ is metrisable.
\end{proof}

\Cref{th:G-from-operator-to-sesqui-is-isomorphism} now immediately yields:

\begin{corollary}\label{th:alaoglu-for-holomorphic-linear-operators}
  The set $R \coloneqq\dset{f \in \Hol(U,\Lb(\mathcal{X},\mathcal{Y}))}{\forall z\in U :\norm{f(z)} \leq 1}$ is a $\mathcal{T}_{\Lambda}$-compact subset of $\Hol(U,\Lb(\mathcal{X},\mathcal{Y}))$.
  If both $\mathcal{X}$ and $\mathcal{Y}$ are separable, then $(R,\mathcal{T}_{\Lambda})$ is metrisable.
\end{corollary}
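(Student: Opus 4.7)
The plan is to transfer \Cref{th:alaoglu-for-holomorphic-sesquilinear-forms} across the homeomorphism $G$ from \Cref{th:G-from-operator-to-sesqui-is-isomorphism}. The first thing I would record is that $G$ sends $R$ exactly to $K$. Indeed, for any $f \in \Hol(U,\Lb(\mathcal{X},\mathcal{Y}))$ and $z\in U$, the operator $f(z)$ and its associated sesquilinear form $G(f)(z)=\scprod{f(z)\cdot}{\cdot}_{\mathcal{Y}}$ have equal norms (this is the classical isometric identification $\Lb(\mathcal{X},\mathcal{Y}) \cong \Ses(\mathcal{X},\mathcal{Y})$ already invoked in the proof of \Cref{th:G-from-operator-to-sesqui-is-bijective}). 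Therefore $\norm{f(z)}\leq 1$ for all $z\in U$ if and only if $\norm{G(f)(z)}\leq 1$ for all $z\in U$, so $G(R)=K$ and $G^{-1}(K)=R$.

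Next, since $G\colon (\Hol(U,\Lb(\mathcal{X},\mathcal{Y})),\mathcal{T}_{\Lambda}) \to (\Hol(U,\Ses(\mathcal{X},\mathcal{Y})),\mathcal{T}_{\tilde{\Lambda}})$ is a homeomorphism by \Cref{th:G-from-operator-to-sesqui-is-isomorphism}, its restriction $G|_{R}\colon R \to K$ is likewise a homeomorphism (with respect to the trace topologies). Compactness of $K$ in $\mathcal{T}_{\tilde{\Lambda}}$, established in \Cref{th:alaoglu-for-holomorphic-sesquilinear-forms}, then transfers to compactness of $R=G^{-1}(K)$ in $\mathcal{T}_{\Lambda}$, which is the first assertion.

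Finally, if $\mathcal{X}$ and $\mathcal{Y}$ are separable, then \Cref{th:alaoglu-for-holomorphic-sesquilinear-forms} also asserts that $(K,\mathcal{T}_{\tilde{\Lambda}})$ is metrisable. Metrisability is preserved by homeomorphisms, so pulling back the metric (equivalently, the topology) through $G|_{R}$ yields that $(R,\mathcal{T}_{\Lambda})$ is metrisable as well. There is no real obstacle in this argument beyond checking that the unit balls correspond under $G$; everything else is a formal consequence of \Cref{th:G-from-operator-to-sesqui-is-isomorphism} and the preceding theorem.
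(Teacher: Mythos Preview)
Your proof is correct and matches the paper's approach exactly: the corollary is stated as an immediate consequence of \Cref{th:G-from-operator-to-sesqui-is-isomorphism}, and you have simply spelled out the details of transferring \Cref{th:alaoglu-for-holomorphic-sesquilinear-forms} through the homeomorphism $G$, including the observation that $G(R)=K$ by the isometry of the operator--sesquilinear-form correspondence.
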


\section{The Parameterised Nonlocal $\mathrm{H}$-Topology}\label{sec:holwop}%

We now come to the main part of this paper, the introduction and discussion of the \hyperref[def:main-topology]{parameterised nonlocal $\mathrm{H}$-topology} or parametrised Schur topology

From now on, we regard a Hilbert space $\mathcal{H}$ that can be orthogonally decomposed into two closed subspaces
\begin{equation*}
  \mathcal{H} = \mathcal{H}_{0} \oplus \mathcal{H}_{1},
\end{equation*}
and an open subset $U$ of $\C$. Furthermore, let $\alpha \in (0,\infty)^{2\times 2}$ be the matrix
\begin{equation*}
  \alpha =
  \begin{pmatrix}
    \alpha_{00} & \alpha_{01} \\
    \alpha_{10} & \alpha_{11}
  \end{pmatrix}.
\end{equation*}
We regard the following space of holomorphic functions
\begin{equation}
  \label{eq:parameterised-M}
  \mathfrak{M}(U) \coloneqq
  \dset{M \in \Hol(U,\Lb(\mathcal{H}))}{\forall z\in U : M(z) \in \mathcal{M}(\mathcal{H}_{0},\mathcal{H}_{1})}.
\end{equation}
Similarly to~\cite[Sec.~5]{Wa22}, we will introduce the topology on $\mathfrak{M}(U)$ as an initial topology w.r.t.\ the ``projections'' onto the components of the
Schur complement and expect to obtain analogous properties. Instead of the weak operator topology we will use the locally uniform weak operator topology, see~\Cref{def:locally-uniform-weak-operator-topology}.
To wit, we regard the following ``projections''
\begin{subequations}\label{eq:schur-projections}
  \begin{align}
    \Lambda_{00}&\colon\left\{
                  \begin{array}{rcl}
                    \mathfrak{M}(U)& \to & \Hol(U,\Lb(\mathcal{H}_{0})) \\
                    M &\mapsto & M_{00}(\cdot)^{-1}
                  \end{array}
                  \right.
    \\
    \Lambda_{01}&\colon\left\{
                  \begin{array}{rcl}
                    \mathfrak{M}(U)& \to & \Hol(U,\Lb(\mathcal{H}_{1},\mathcal{H}_{0})) \\
                    M &\mapsto & M_{00}(\cdot)^{-1}M_{01}
                  \end{array}
                  \right.
    \\
    \Lambda_{10}&\colon\left\{
                  \begin{array}{rcl}
                    \mathfrak{M}(U)& \to & \Hol(U,\Lb(\mathcal{H}_{0},\mathcal{H}_{1})) \\
                    M &\mapsto & M_{10}M_{00}(\cdot)^{-1}
                  \end{array}
                  \right.
    \\
    \Lambda_{11}&\colon\left\{
                  \begin{array}{rcl}
                    \mathfrak{M}(U)& \to & \Hol(U,\Lb(\mathcal{H}_{1})) \\
                    M &\mapsto & M_{11} - M_{10}M_{00}(\cdot)^{-1}M_{01}
                  \end{array}
                  \right.
  \end{align}
\end{subequations}
Note that these mappings are well-defined by the definition of $\mathfrak{M}(U)$ and \Cref{le:components-holomorphic,le:inverse-holomorphic,le:product-holomorphic}.

\begin{definition}\label{def:main-topology}
  Let $\mathfrak{M}(U)$ be the set defined in~\eqref{eq:parameterised-M} and $\Lambda_{00}$, $\Lambda_{01}$, $\Lambda_{10}$, $\Lambda_{11}$ the mappings defined in~\eqref{eq:schur-projections}. Then we define the \emph{parameterised nonlocal $\mathrm{H}$-topology} or \emph{parameterised Schur topology}, $\tau_{\Hol}(\mathcal{H}_0,\mathcal{H}_1)$, as the initial topology on $\mathfrak{M}(U)$ w.r.t.\ the mappings $\Lambda_{00}$, $\Lambda_{01}$, $\Lambda_{10}$, $\Lambda_{11}$, where the codomains are each endowed with the corresponding \hyperref[def:locally-uniform-weak-operator-topology]{locally uniform weak operator topology $\mathcal{T}_{\Lambda}$} (\Cref{def:locally-uniform-weak-operator-topology}).
\end{definition}

\begin{remark}\label{re:paramet-implies-pointw-nonlocal-h-conv}
  Comparing \Cref{def:nHc} and \Cref{def:main-topology}, we can deduce
  that if a net $(M_{i})_{i\in I}$ in $\mathfrak{M}(U)$ \hyperref[def:main-topology]{$\tau_{\Hol}(\mathcal{H}_0,\mathcal{H}_1)$}-converges to $M\in\mathfrak{M}(U)$, then for each $z\in U$ the net $(M_{i}(z))_{i\in I}$ in $\mathcal{M}(\mathcal{H}_{0},\mathcal{H}_{1})$ converges to $M(z)\in \mathcal{M}(\mathcal{H}_{0},\mathcal{H}_{1})$ w.r.t. \hyperref[def:nHc]{$\tau(\mathcal{H}_0,\mathcal{H}_1)$}.
\end{remark}

\begin{remark}
  Apparently, we cannot expect a statement similar to \Cref{re:loc-unif-wot-is-tvs-and-hausdorff} to hold. Neither is $\mathfrak{M}(U)$ a vector space, nor are
  the mappings $\Lambda_{00}$, $\Lambda_{01}$, $\Lambda_{10}$, $\Lambda_{11}$ linear.
  Just considering $\Lambda_{00}$ and the fact that in general $1/z+1/w\neq 1/(z+w)$
  holds for $z,w\in\C$, we see that addition is not even continuous when staying within $\mathfrak{M}(U)$. Scalar multiplication however is continuous as a mapping
  from $\C \setminus \set{0}\times\mathfrak{M}(U)$ to $\mathfrak{M}(U)$ as one can show using nets and the definition of $\mathcal{T}_{\Lambda}$. Moreover, $\Lambda_{00}$, $\Lambda_{01}$, $\Lambda_{10}$, $\Lambda_{11}$ separate points, i.e., $\mathfrak{M}(U)$ is Hausdorff.
\end{remark}

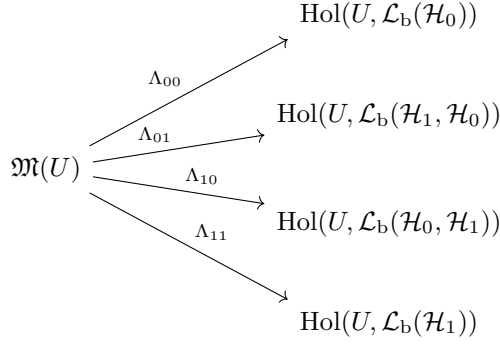
\begin{figure}
  \centering
  \begin{tikzcd}[row sep=0.7ex,column sep=15ex]
    & \Hol(U,\Lb(\mathcal{H}_{0}))  \\
    \phantom{\Hol}\\
    & \Hol(U,\Lb(\mathcal{H}_{1},\mathcal{H}_{0}))  \\
    \mathfrak{M}(U)
    \ar[end anchor=south west]{uuur}{\Lambda_{00}}
    \ar{ur}{\Lambda_{01}}
    \ar{dr}{\Lambda_{10}}
    \ar[end anchor=north west]{dddr}{\Lambda_{11}} \\
    & \Hol(U,\Lb(\mathcal{H}_{0},\mathcal{H}_1))  \\
    \phantom{\Hol}\\
    & \Hol(U,\Lb(\mathcal{H}_1))
  \end{tikzcd}
  \caption{Initial topology on $\mathfrak{M}(U)$}
\end{figure}

We now regard the space
\begin{equation*}
  \mathfrak{M}(U,\alpha) \coloneqq
  \dset{f \in \mathfrak{M}(U)}{\forall z\in U : f(z) \in \mathcal{M}(\alpha)}
\end{equation*}
equipped with the trace topology of $(\mathfrak{M}(U), \hyperref[def:main-topology]{\tau_{\Hol}(\mathcal{H}_0,\mathcal{H}_1)})$ and the spaces

\begin{align*}
  \mathcal{A}(U,\alpha_{00},\alpha_{11}) &\coloneqq \dset*{A \in \Hol(U,\Lb(\mathcal{H}_{0}))}{\forall z\in U : \Re A(z)^{-1} \geq \alpha_{00}, \Re A(z) \geq \frac{1}{\alpha_{11}}}, \\
  \mathcal{B}(U,\alpha_{01}) &\coloneqq \dset*{B \in \Hol(U,\Lb(\mathcal{H}_{0},\mathcal{H}_{1}))}{\forall z\in U : \norm{B(z)} \leq \alpha_{01}\vphantom{\frac{1}{2}}}, \\
  \mathcal{C}(U,\alpha_{10}) &\coloneqq \dset*{C \in \Hol(U,\Lb(\mathcal{H}_{1},\mathcal{H}_{0}))}{\forall z\in U : \norm{C(z)} \leq \alpha_{10}\vphantom{\frac{1}{2}}}, \\
  \mathcal{D}(U,\alpha_{00},\alpha_{11}) &\coloneqq \dset*{D \in \Hol(U,\Lb(\mathcal{H}_{1}))}{\forall z\in U : \Re D(z) \geq \alpha_{00}, \Re D(z)^{-1} \geq \frac{1}{\alpha_{11}}}
\end{align*}
equipped with the traces of the respective
\hyperref[def:locally-uniform-weak-operator-topology]{locally uniform weak operator topology}.


\begin{remark}
  Let $\Lambda_{00}$, $\Lambda_{01}$, $\Lambda_{10}$ and $\Lambda_{11}$ be the mappings from \eqref{eq:schur-projections}. Then their following restrictions to $\mathfrak{M}(U,\alpha)$ are well-defined and continuous:
  \begin{align*}
    \Lambda_{00}&\colon\left\{
                  \begin{array}{rcl}
                    \mathfrak{M}(U,\alpha)& \to & \mathcal{A}(U,\alpha_{00},\alpha_{11}) \\
                    M &\mapsto & M_{00}(\cdot)^{-1}
                  \end{array}
                  \right.
    \\
    \Lambda_{01}&\colon\left\{
                  \begin{array}{rcl}
                    \mathfrak{M}(U,\alpha)& \to & \mathcal{B}(U,\alpha_{01}) \\
                    M &\mapsto & M_{00}(\cdot)^{-1}M_{01}
                  \end{array}
                  \right.
    \\
    \Lambda_{10}&\colon\left\{
                  \begin{array}{rcl}
                    \mathfrak{M}(U,\alpha)& \to & \mathcal{C}(U,\alpha_{10}) \\
                    M &\mapsto & M_{10}M_{00}(\cdot)^{-1}
                  \end{array}
                  \right.
    \\
    \Lambda_{11}&\colon\left\{
                  \begin{array}{rcl}
                    \mathfrak{M}(U,\alpha)& \to & \mathcal{D}(U,\alpha_{00},\alpha_{11}) \\
                    M &\mapsto & M_{11} - M_{10}M_{00}(\cdot)^{-1}M_{01}
                  \end{array}
                  \right.
  \end{align*}
  In fact, they even induce the topology on $\mathfrak{M}(U,\alpha)$ as their initial
  topology (the diagram in \Cref{fig:different-intial-topos-on-M-alpha} is commutative
  and initial topologies are transitive).
\end{remark}

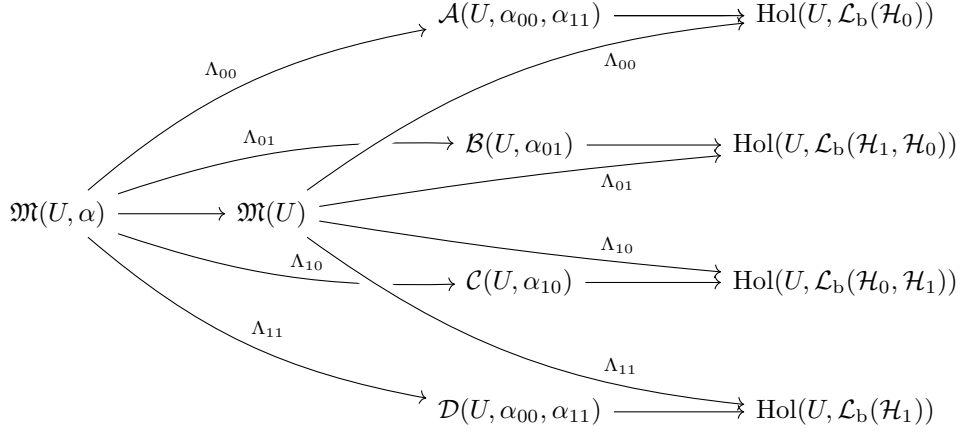
\begin{figure}
  \centering
  \begin{tikzcd}[row sep=2ex,column sep=9.4ex]
    &&\mathcal{A}(U,\alpha_{00},\alpha_{11}) \ar{r}& \Hol(U,\Lb(\mathcal{H}_{0}))  \\
    \phantom{\Hol}\\
    &&\mathcal{B}(U,\alpha_{01})\ar{r}& \Hol(U,\Lb(\mathcal{H}_{1},\mathcal{H}_{0}))  \\
    \mathfrak{M}(U,\alpha) \ar{r}
    \ar[bend left=15]{uuurr}{\Lambda_{00}}
    \ar[bend left=10]{urr}{\Lambda_{01}}
    \ar[bend right=10]{drr}{\Lambda_{10}}
    \ar[bend right=15]{dddrr}{\Lambda_{11}}
    &
    \mathfrak{M}(U)
    \ar[bend left=15,crossing over]{uuurr}[swap,pos=0.7]{\Lambda_{00}}
    \ar[bend left=2]{urr}[swap,pos=0.7]{\Lambda_{01}}
    \ar[bend right=2]{drr}[pos=0.7]{\Lambda_{10}}
    \ar[bend right=15,crossing over]{dddrr}[pos=0.7]{\Lambda_{11}}
    \\
    &&\mathcal{C}(U,\alpha_{10})\ar{r}& \Hol(U,\Lb(\mathcal{H}_{0},\mathcal{H}_1))  \\
    \phantom{\Hol}\\
    &&\mathcal{D}(U,\alpha_{00},\alpha_{11})\ar{r}& \Hol(U,\Lb(\mathcal{H}_1))
    \ar[from=1-3, to=1-4]
  \end{tikzcd}
  \caption{Two equivalent ways of defining the initial topology on $\mathfrak{M}(U,\alpha)$}
  \label{fig:different-intial-topos-on-M-alpha}
\end{figure}

\begin{lemma}\label{le:compactness-of-schur-components}
  $\mathcal{A}(U,\alpha_{00},\alpha_{11})$, $\mathcal{B}(U,\alpha_{01})$, $\mathcal{C}(U,\alpha_{10})$ and $\mathcal{D}(U,\alpha_{00},\alpha_{11})$ are compact. 
  If $\mathcal{H}$ is separable, then these sets are metrisable.
\end{lemma}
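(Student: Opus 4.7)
The plan is to realise each of the four sets as a closed subset of a $\mathcal{T}_{\Lambda}$-compact operator-norm ball in the corresponding holomorphic function space. Compactness then follows from \Cref{th:alaoglu-for-holomorphic-linear-operators} because closed subsets of compact sets are compact, and metrisability in the separable case is inherited from the ambient ball (closed subspaces of metrisable spaces being metrisable).

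For the norm bounds: the sets $\mathcal{B}(U,\alpha_{01})$ and $\mathcal{C}(U,\alpha_{10})$ are, by definition, (scaled) balls of the form considered in \Cref{th:alaoglu-for-holomorphic-linear-operators}, so nothing is to be done for them beyond closedness. For $\mathcal{A}(U,\alpha_{00},\alpha_{11})$ and $\mathcal{D}(U,\alpha_{00},\alpha_{11})$, the condition $\Re A(z)^{-1}\geq \alpha_{00}$, together with \Cref{le:realpart-bounded-from-below-implies-invertible} applied to $A(z)^{-1}$, forces $A(z)$ to be boundedly invertible with $\norm{A(z)} = \norm*{(A(z)^{-1})^{-1}}\leq 1/\alpha_{00}$; thus $\mathcal{A}$ and $\mathcal{D}$ are contained in the ball of radius $1/\alpha_{00}$.

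For closedness, convergence in $\mathcal{T}_{\Lambda}$ implies pointwise weak convergence at every $z\in U$, and operator-norm balls in $\Lb$ are weakly closed, which already handles $\mathcal{B}$ and $\mathcal{C}$. For $\mathcal{A}$, the coercivity $\Re A(z)\geq 1/\alpha_{11}$ passes to the limit directly and yields invertibility of $A(z)$ via \Cref{le:realpart-bounded-from-below-implies-invertible}. The main obstacle is to propagate $\Re A(z)^{-1}\geq \alpha_{00}$ to the limit, since weak convergence of $A_i(z)$ does not a priori give any control over $A_i(z)^{-1}$. I would side-step this by eliminating the inverse from the coercivity condition: substituting $\psi = A(z)\varphi$ shows that $\Re A(z)^{-1}\geq \alpha_{00}$ is equivalent to
\[
  \Re\scprod{\varphi}{A(z)\varphi} \geq \alpha_{00}\norm{A(z)\varphi}^2 \qquad(\varphi\in \mathcal{H}_0).
\]
Under pointwise weak convergence $A_i(z)\varphi\weakto A(z)\varphi$, lower semicontinuity of the norm gives $\norm{A(z)\varphi}^2\leq \liminf \norm{A_i(z)\varphi}^2$, while the left-hand side converges. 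Taking $\liminf$ across the inequality valid for each $A_i(z)$ then propagates it to $A(z)$, and undoing the substitution recovers $\Re A(z)^{-1}\geq \alpha_{00}$. The case of $\mathcal{D}$ is entirely symmetric, with the roles of the two real-part conditions swapped.
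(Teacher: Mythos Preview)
Your proposal is correct and follows essentially the same architecture as the paper: contain each set in a $\mathcal{T}_\Lambda$-compact norm ball via \Cref{th:alaoglu-for-holomorphic-linear-operators}, then verify closedness by passing the defining inequalities through pointwise weak limits, using the substitution $\psi=A(z)\varphi$ to rewrite the inverse-coercivity as $\alpha_{00}\norm{A(z)\varphi}^{2}\leq \Re\scprod{A(z)\varphi}{\varphi}$. The only difference is the device used to push this last inequality to the limit: you invoke weak lower semicontinuity of the norm and take a $\liminf$, whereas the paper inserts a Cauchy--Schwarz step,
\[
  \alpha_{00}\,\frac{\abs{\scprod{A_i(z)\varphi}{A(z)\varphi}}^{2}}{\norm{A(z)\varphi}^{2}}
  \leq \alpha_{00}\norm{A_i(z)\varphi}^{2}
  \leq \Re\scprod{A_i(z)\varphi}{\varphi},
\]
so that both outer expressions are genuine inner products converging along the net, and no $\liminf$ is needed. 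Your route is slightly more direct; the paper's avoids having to think about $\liminf$ for general nets. Both are standard and equally valid.
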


\begin{proof}
  Note that
  \begin{equation*}
    \mathcal{A}(U,\alpha_{00},\alpha_{11}) \subseteq \dset{f \in \Hol(U,\Lb(\mathcal{H}_{0}))}{\forall z\in U :\norm{f(z)} \leq C}
  \end{equation*}
  for a suitable $C \geq 0$ (see \Cref{le:realpart-bounded-from-below-implies-invertible}).
  Hence by \Cref{th:alaoglu-for-holomorphic-linear-operators}, the set $\mathcal{A}(U,\alpha_{00},\alpha_{11})$ is contained in a compact set and is therefore relatively compact.

  For compactness, we will show that $\mathcal{A}(U,\alpha_{00},\alpha_{11})$ even is closed: Let $(A_{i})_{i\in I}$ be a net in $\mathcal{A}(U,\alpha_{00},\alpha_{11})$ converging
  to $A\in \Hol(U,\Lb(\mathcal{H}_{0}))$. Then for each $z\in U$, $i\in I$ and $\varphi \in\mathcal{H}_{0}$,
  \begin{align*}
    \frac{1}{\alpha_{11}} \norm{\varphi}_{\mathcal{H}_{0}}^{2} \leq \Re \scprod{A_{i}(z) \varphi}{\varphi}_{\mathcal{H}_{0}}
  \end{align*}
  holds and taking the limit in $i\in I$ implies
  \begin{equation*}
     \frac{1}{\alpha_{11}}\leq\Re A(z).
  \end{equation*}
  These last two inequalities also show that $A_{i}(z)^{-1}\in \Lb(\mathcal{H}_{0}),i\in I$
  and $A(z)^{-1}\in \Lb(\mathcal{H}_{0})$ exist for every $z\in U$ (\Cref{le:realpart-bounded-from-below-implies-invertible}).
  Next, we have
  \begin{subequations}\label{eq:bound-on-inverse-equivalent-bound-on-operator}
  \begin{equation}
    \alpha_{00}\norm{\varphi}_{\mathcal{H}_{0}}^{2} \leq\Re\scprod{A_{i}(z)^{-1}\varphi}{\varphi}_{\mathcal{H}_{0}}
    \end{equation}
  for each $z\in U$, $i\in I$ and $\varphi\in \mathcal{H}_{0}$. Using the substitution
  $\psi = A_{i}(z)^{-1}\varphi$, we get
  \begin{equation}
    \alpha_{00}\norm{A_{i}(z)\psi}_{\mathcal{H}_{0}}^{2} \leq\Re\scprod{\psi}{A_{i}(z)\psi}_{\mathcal{H}_{0}}
    =\Re\scprod{A_{i}(z)\psi}{\psi}_{\mathcal{H}_{0}}
  \end{equation}
  \end{subequations}
  for each $z\in U$, $i\in I$ and $\psi\in \mathcal{H}_{0}$. The Cauchy--Schwarz inequality
  yields
  \begin{align}
    \alpha_{00}\frac{\abs{\scprod{A_{i}(z)\psi}{A(z)\psi}_{\mathcal{H}_{0}}}^{2}}{\norm{A(z)\psi}^{2}_{\mathcal{H}_{0}}}
    \leq \alpha_{00}\norm{A_{i}(z)\psi}_{\mathcal{H}_{0}}^{2}
    \leq\Re\scprod{A_{i}(z)\psi}{\psi}_{\mathcal{H}_{0}}\label{eq:cauchy-schwarz-instead-lim-inf}
  \end{align}
  in case $A(z)\psi\neq 0$. Taking the limits in the two scalar products, we obtain
  \begin{equation*}
    \alpha_{00}\norm{A(z)\psi}_{\mathcal{H}_{0}}^{2}
    \leq\Re\scprod{A(z)\psi}{\psi}_{\mathcal{H}_{0}}
  \end{equation*}
  for each $z\in U$ and $\psi\in \mathcal{H}_{0}$ (the case $A(z)\psi=0$ is trivial).
  The substitution $\psi = A(z)^{-1}\varphi$ then implies $\Re A(z)^{-1} \geq \alpha_{00}$.

  The same proof also shows that $\mathcal{D}(U,\alpha_{00},\alpha_{11})$ is compact.
  The sets $\mathcal{B}(U,\alpha_{01})$ and $\mathcal{C}(U,\alpha_{10})$ are already of a
  form that allows us to directly employ \Cref{th:alaoglu-for-holomorphic-linear-operators}.

  If $\mathcal{H}$ is separable, both $\mathcal{H}_{0}$ and $\mathcal{H}_{1}$ are separable
  too. Hence, \Cref{th:alaoglu-for-holomorphic-linear-operators} yields
  the desired metrisability.
\end{proof}

\begin{theorem}\label{th:main-theorem}
  $\mathfrak{M}(U,\alpha)$ equipped with the trace topology of $\mathfrak{M}(U)$ is compact.\newline
  If $\mathcal{H}$ is separable, then $\mathfrak{M}(U,\alpha)$ is metrisable and thus sequentially compact.
\end{theorem}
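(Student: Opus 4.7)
The plan is to exhibit $\mathfrak{M}(U,\alpha)$ as homeomorphic to the product
\[
\Pi \coloneqq \mathcal{A}(U,\alpha_{00},\alpha_{11})\times\mathcal{B}(U,\alpha_{01})\times\mathcal{C}(U,\alpha_{10})\times\mathcal{D}(U,\alpha_{00},\alpha_{11}),
\]
which is compact by Tychonoff's theorem together with \Cref{le:compactness-of-schur-components}. The preceding remark already shows that the product map $\Lambda \coloneqq (\Lambda_{00},\Lambda_{01},\Lambda_{10},\Lambda_{11})\colon \mathfrak{M}(U,\alpha)\to \Pi$ is continuous and that the topology on $\mathfrak{M}(U,\alpha)$ is the initial topology with respect to the four projections $\Lambda_{ij}$. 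Transitivity of initial topologies identifies this with the initial topology with respect to $\Lambda$ alone, so once $\Lambda$ is shown to be bijective it will automatically be a homeomorphism onto $\Pi$, and compactness of $\mathfrak{M}(U,\alpha)$ will follow.

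The heart of the argument is therefore the construction of an inverse $\Psi\colon \Pi \to \mathfrak{M}(U,\alpha)$ via the usual Schur reconstruction
\[
\Psi(A,B,C,D) \coloneqq \begin{pmatrix} A^{-1} & A^{-1}B \\ CA^{-1} & D+CA^{-1}B \end{pmatrix}.
\]
Pointwise invertibility of $A(z)$ is guaranteed by $\Re A(z)\geq 1/\alpha_{11}$ via \Cref{le:realpart-bounded-from-below-implies-invertible}, and holomorphy of the resulting matrix entries then follows from \Cref{le:inverse-holomorphic} and \Cref{le:product-holomorphic}. A direct calculation shows that the Schur data of $M = \Psi(A,B,C,D)$ read $M_{00}^{-1}=A$, $M_{10}M_{00}^{-1}=C$, $M_{00}^{-1}M_{01}=B$ and $M_{11}-M_{10}M_{00}^{-1}M_{01}=D$, so each of the six defining inequalities of $\mathcal{M}(\alpha)$ translates verbatim into one of the defining conditions of $\mathcal{A}$, $\mathcal{B}$, $\mathcal{C}$ or $\mathcal{D}$; bounded invertibility of $M(z)$ is then handed down by the Schur-complement formula in \Cref{le:inverse-of-t-plus-a} applied to the invertible diagonal data $A^{-1}$ and $D$. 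This gives $\Psi(\Pi)\subseteq\mathfrak{M}(U,\alpha)$ together with $\Lambda\circ\Psi=\mathrm{id}_\Pi$; the reverse identity $\Psi\circ\Lambda=\mathrm{id}_{\mathfrak{M}(U,\alpha)}$ is immediate from the same algebra applied to an arbitrary $M\in\mathfrak{M}(U,\alpha)$.

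With $\Lambda$ established as a homeomorphism onto the compact space $\Pi$, compactness of $\mathfrak{M}(U,\alpha)$ follows. For the second assertion, separability of $\mathcal{H}$ is inherited by $\mathcal{H}_0$ and $\mathcal{H}_1$, so by \Cref{le:compactness-of-schur-components} each of the four factors of $\Pi$ is metrisable; a finite product of metrisable spaces is metrisable, and a compact metrisable space is sequentially compact. I expect the only real hurdle to be the bookkeeping in the middle step: matching the six inequalities of $\mathcal{M}(\alpha)$ against the four sets $\mathcal{A}$, $\mathcal{B}$, $\mathcal{C}$, $\mathcal{D}$ with no condition left over, and carrying the $z$-wise bounded invertibility of $A(z)$ and of the full block operator $M(z)$ through the holomorphy lemmas uniformly on $U$.
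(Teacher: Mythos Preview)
Your proposal is correct and follows essentially the same route as the paper: both exhibit $(\Lambda_{00},\Lambda_{01},\Lambda_{10},\Lambda_{11})$ as a homeomorphism from $\mathfrak{M}(U,\alpha)$ onto the compact (and, in the separable case, metrisable) product $\mathcal{A}\times\mathcal{B}\times\mathcal{C}\times\mathcal{D}$, with the inverse given by the Schur reconstruction. The only cosmetic difference is that the paper writes down the explicit block inverse of $M(z)$ in place rather than invoking \Cref{le:inverse-of-t-plus-a}, which is a forward reference to Section~6 formulated with a fixed skew-selfadjoint $A$ and would need to be specialised to $A=0$ for your purpose.
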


\begin{figure}[!h]
  \centering
  \begin{tikzcd}[row sep=0.7ex,column sep=15ex]
    &&& \mathcal{A}(U,\alpha_{00},\alpha_{11})  \\
    \phantom{\Hol}\\
    &&& \mathcal{B}(U,\alpha_{01})  \\
    \mathfrak{M}(U,\alpha)\ar{rr}{(\Lambda_{00},\Lambda_{01},\Lambda_{10},\Lambda_{11})}\ar[bend right=20]{rrrddd}{\Lambda_{11}}\ar[bend left=20]{rrruuu}{\Lambda_{00}}
    \ar[bend left=15]{rrru}{\Lambda_{01}}\ar[bend right=15]{rrrd}{\Lambda_{10}}
    &&S
    \ar[end anchor=south west,crossing over]{uuur}
     \ar[pos=0.8]{ur}
    \ar{dr}
    \ar[end anchor=north west,crossing over]{dddr}\\
    &&& \mathcal{C}(U,\alpha_{10})  \\
    \phantom{\Hol}\\
    &&& \mathcal{D}(U,\alpha_{00},\alpha_{11})
  \end{tikzcd}
  \caption{Topology on $\mathfrak{M}(U,\alpha)$ as a product topology}
  \label{fig:nonlocal-h-topology-is-product-topology}
\end{figure}
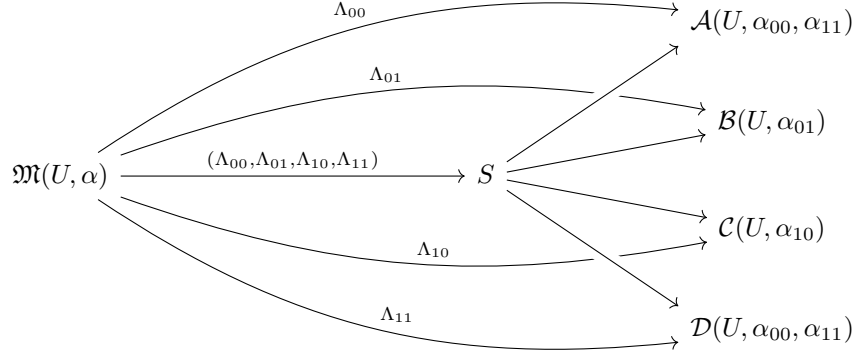

\begin{proof}
If we endow
  \begin{equation*}
    S \coloneqq
    \mathcal{A}(U,\alpha_{00},\alpha_{11})\times\mathcal{B}(U,\alpha_{01})\times \mathcal{C}(U,\alpha_{10})\times\mathcal{D}(U,\alpha_{00},\alpha_{11})
  \end{equation*}
  with the product topology, $S$ is compact (metrisable) as the product
  of finitely many compact (metrisable) spaces (see \Cref{le:compactness-of-schur-components}).
  Moreover, the mapping
  \begin{equation*}
    (\Lambda_{00},\Lambda_{01},\Lambda_{10},\Lambda_{11})\colon \mathfrak{M}(U,\alpha)\to S
  \end{equation*}
  is continuous.
  For $(A,B,C,D)\in S$ we define (note that $A(z)$ and
  $D(z)$ are bounded and invertible for $z\in U$ by \Cref{le:realpart-bounded-from-below-implies-invertible} and note
  \Cref{le:components-holomorphic,le:inverse-holomorphic,le:product-holomorphic}) the block operator
    \begin{equation*}
    z\mapsto M(z) \coloneqq
    \begin{pmatrix}
      A(z)^{-1} & A(z)^{-1}B(z) \\
      C(z) A(z)^{-1} & D(z) +C(z) A(z)^{-1} B(z)
    \end{pmatrix}\in \Hol(U,\Lb(\mathcal{H}))\text{.}
  \end{equation*}
  Its inverse operator is given by
  \begin{equation*}
    z\mapsto M(z)^{-1} =
    \begin{pmatrix}
      A(z) + B(z)D(z)^{-1}C(z) & -B(z)D(z)^{-1} \\
      -D(z)^{-1}C(z) & D(z)^{-1}
    \end{pmatrix}\in \Hol(U,\Lb(\mathcal{H}))\text{.}
  \end{equation*}
  In other words, we obtain $M\in\mathfrak{M}(U)$. Moreover,
  $\Lambda_{00}(M)=A$, $\Lambda_{01}(M)=B$, $\Lambda_{10}(M)=C$ and
  $\Lambda_{11}(M)=D$, i.e., $M\in \mathfrak{M}(U,\alpha)$ is a
  pre-image of $(A,B,C,D)$ under $(\Lambda_{00},\Lambda_{01},\Lambda_{10},\Lambda_{11})$.
  Clearly, it is the only one, which implies that 
  \begin{equation*}(\Lambda_{00},\Lambda_{01},\Lambda_{10},\Lambda_{11})\colon \mathfrak{M}(U,\alpha)\to S\end{equation*}
  is a continuous bijection. Since the diagram in \Cref{fig:nonlocal-h-topology-is-product-topology} is commutative and initial topologies are transitive,
  $(\Lambda_{00},\Lambda_{01},\Lambda_{10},\Lambda_{11})$ even is a homeomorphism, which finishes the proof.
\end{proof}

\begin{lemma}\label{le:paramet-equiv-pointw-nonlocal-h-conv}
  Let $(M_{i})_{i\in I}$ be a net in $\mathfrak{M}(U,\alpha)$ and $M\colon U\to\mathcal{M}(\mathcal{H}_{0},\mathcal{H}_{1})$.
  Then, $M\in \mathfrak{M}(U)$ and $(M_{i})_{i\in I}$ converges to $M$ w.r.t.\ \hyperref[def:main-topology]{$\tau_{\Hol}(\mathcal{H}_0,\mathcal{H}_1)$} if and only
  if $(M_{i}(z))_{i\in I}$ converges to $M(z)$ w.r.t.\ \hyperref[def:nHc]{$\tau(\mathcal{H}_0,\mathcal{H}_1)$}  for every $z\in U$. In either case, we have $M\in \mathfrak{M}(U,\alpha)$.
\end{lemma}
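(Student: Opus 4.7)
The plan is to derive the forward implication directly from the definition of the initial topology $\tau_{\Hol}$, and to obtain the backward implication together with the claim $M \in \mathfrak{M}(U,\alpha)$ via a standard compact-Hausdorff subnet argument, with the compactness of $\mathfrak{M}(U,\alpha)$ supplied by \Cref{th:main-theorem}.

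For the forward direction, I would assume $M \in \mathfrak{M}(U)$ and $(M_i)_{i\in I} \to M$ in $\hyperref[def:main-topology]{\tau_{\Hol}(\mathcal{H}_0,\mathcal{H}_1)}$. Unfolding the initial topology, $\Lambda_{kl}(M_i) \to \Lambda_{kl}(M)$ in the \hyperref[def:locally-uniform-weak-operator-topology]{locally uniform weak operator topology} $\mathcal{T}_{\Lambda}$ for $k,l\in\{0,1\}$. Since point evaluation is continuous on $\Hol(U,\C)$ (cf.~\Cref{re:compact-convergence-topology-properties}), composing the defining maps $\Lambda_{\varphi,\psi}$ of $\mathcal{T}_\Lambda$ with evaluation at an arbitrary $z\in U$ produces weak operator convergence $\Lambda_{kl}(M_i)(z) \to \Lambda_{kl}(M)(z)$. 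In view of \Cref{def:nHc}, this is precisely \hyperref[def:nHc]{$\tau(\mathcal{H}_0,\mathcal{H}_1)$}-convergence of $M_i(z)$ to $M(z)$ for every $z \in U$, as already pointed out in \Cref{re:paramet-implies-pointw-nonlocal-h-conv}.

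For the backward direction together with the inclusion $M \in \mathfrak{M}(U,\alpha)$, I would argue by extracting subnets. Take any subnet of $(M_i)_{i\in I}$; by compactness of $\mathfrak{M}(U,\alpha)$ (\Cref{th:main-theorem}) it admits a further subnet $(M_{j_\beta})_\beta$ that $\tau_{\Hol}$-converges to some $\tilde M \in \mathfrak{M}(U,\alpha)$. Applying the forward direction, already established, to $(M_{j_\beta})_\beta$ yields $M_{j_\beta}(z) \to \tilde M(z)$ in $\tau(\mathcal{H}_0,\mathcal{H}_1)$ for every $z$, while the hypothesis of the backward direction simultaneously gives $M_{j_\beta}(z) \to M(z)$ in $\tau(\mathcal{H}_0,\mathcal{H}_1)$. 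Since the four defining maps of $\tau(\mathcal{H}_0,\mathcal{H}_1)$ separate points\,---\,via the pointwise version of the block-matrix reconstruction displayed in the proof of \Cref{th:main-theorem}\,---\, the topology $\tau(\mathcal{H}_0,\mathcal{H}_1)$ is Hausdorff, forcing $\tilde M(z) = M(z)$ for every $z \in U$ and hence $M = \tilde M \in \mathfrak{M}(U,\alpha)$. Thus every subnet of $(M_i)_{i\in I}$ has a further subnet $\tau_{\Hol}$-converging to $M$, which in the compact Hausdorff space $(\mathfrak{M}(U,\alpha),\tau_{\Hol})$ is equivalent to $(M_i)_{i\in I} \to M$ in $\tau_{\Hol}$. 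The same subnet extraction, applied inside the forward direction, yields $M \in \mathfrak{M}(U,\alpha)$ there as well, so the additional claim holds in both cases.

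Once \Cref{th:main-theorem} is granted, no genuine obstacle remains: the present lemma is essentially a bookkeeping consequence of compactness and Hausdorffness, and the substantive work has been invested in the compactness statement. The only place to be slightly careful is in justifying Hausdorffness of $\tau(\mathcal{H}_0,\mathcal{H}_1)$ itself, but this is immediate from the same Schur-complement identity that underlies \Cref{th:main-theorem}.
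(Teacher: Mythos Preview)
Your proposal is correct and follows essentially the same route as the paper: the forward implication is \Cref{re:paramet-implies-pointw-nonlocal-h-conv}, and the backward implication is obtained by passing to a convergent sub-subnet via the compactness of $\mathfrak{M}(U,\alpha)$ (\Cref{th:main-theorem}) and identifying its limit with $M$ by Hausdorffness. The only cosmetic difference is that you spell out the Hausdorffness of $\tau(\mathcal{H}_0,\mathcal{H}_1)$ explicitly, whereas the paper leaves it implicit; note also that the ``every subnet has a further subnet converging to $M$ implies convergence to $M$'' step holds in any topological space, so you need not invoke compactness or Hausdorffness for that final inference.
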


\begin{proof}
  As discussed in \Cref{re:paramet-implies-pointw-nonlocal-h-conv}, we know that
  parameterised implies pointwise convergence to the same limit.

  Conversely, assume that
  $(M_{i}(z))_{i\in I}$ converges to $M(z)$ w.r.t.\ \hyperref[def:nHc]{$\tau(\mathcal{H}_0,\mathcal{H}_1)$} for every
  $z\in U$ and consider any subnet of $(M_{i})_{i\in I}$. By virtue of \Cref{th:main-theorem}, this subnet
  has a further subnet converging to some $N\in \mathfrak{M}(U,\alpha)$ w.r.t.\ the \hyperref[def:main-topology]{$\tau_{\Hol}(\mathcal{H}_0,\mathcal{H}_1$)}. Since we also have pointwise convergence to $M$, \Cref{re:paramet-implies-pointw-nonlocal-h-conv} implies $N=M\in\mathfrak{M}(U,\alpha)$. So, every subnet has a further subnet converging to $M$ which finishes the proof.
\end{proof}

Combining \Cref{le:paramet-equiv-pointw-nonlocal-h-conv} and \Cref{le:sot-conv-implies-nonlocal-h-pointw}, we immediately obtain:

\begin{corollary}\label{cor:sotnlH}
  Let $(M_{n})_{n\in\N}$ be a sequence in $\mathfrak{M}(U,\alpha)$
  and $M\colon U\to \Lb(\mathcal{H})$
  such that $M_{n}(z)$ converges to $M(z)$ in the strong operator topology for every $z \in U$. Then, $M \in \mathfrak{M}(U,\alpha)$ and $(M_{n})_{n\in\N}$ converges to $M$ w.r.t.\
  \hyperref[def:main-topology]{$\tau_{\Hol}(\mathcal{H}_0,\mathcal{H}_1)$}.
\end{corollary}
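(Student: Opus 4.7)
The strategy is direct: transfer the strong-operator convergence assumption to the pointwise nonlocal $\mathrm{H}$-convergence setting via \Cref{le:sot-conv-implies-nonlocal-h-pointw}, and then promote this pointwise statement to parameterised convergence via \Cref{le:paramet-equiv-pointw-nonlocal-h-conv}. There is essentially no obstacle beyond verifying that the hypotheses of the two lemmas line up with the hypotheses we are given.

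In detail, I would first fix an arbitrary $z \in U$. Since $M_{n} \in \mathfrak{M}(U,\alpha)$ by assumption, the definition of $\mathfrak{M}(U,\alpha)$ gives $M_{n}(z) \in \mathcal{M}(\alpha)$ for every $n \in \N$, and by hypothesis $M_{n}(z) \to M(z)$ in the strong operator topology. Hence \Cref{le:sot-conv-implies-nonlocal-h-pointw} is applicable and yields both $M(z) \in \mathcal{M}(\alpha)$ (so in particular $M(z) \in \mathcal{M}(\mathcal{H}_{0},\mathcal{H}_{1})$) and $M_{n}(z) \to M(z)$ in $\tau(\mathcal{H}_{0},\mathcal{H}_{1})$.

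As $z \in U$ was arbitrary, this simultaneously shows that $M$ maps $U$ into $\mathcal{M}(\mathcal{H}_{0},\mathcal{H}_{1})$ and that $(M_{n}(z))_{n\in\N}$ converges to $M(z)$ w.r.t.\ $\tau(\mathcal{H}_{0},\mathcal{H}_{1})$ for every $z\in U$. The hypotheses of \Cref{le:paramet-equiv-pointw-nonlocal-h-conv} (applied to the sequence viewed as a net) are therefore satisfied, so we conclude that $M \in \mathfrak{M}(U,\alpha)$ (which in particular delivers the missing holomorphy of $M$) and that $(M_{n})_{n\in\N}$ converges to $M$ w.r.t.\ $\tau_{\Hol}(\mathcal{H}_{0},\mathcal{H}_{1})$, completing the proof.
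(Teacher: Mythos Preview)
Your proof is correct and follows exactly the route the paper takes: the paper states that the corollary is obtained immediately by combining \Cref{le:sot-conv-implies-nonlocal-h-pointw} and \Cref{le:paramet-equiv-pointw-nonlocal-h-conv}, which is precisely what you do, with the hypothesis checks spelled out.
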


We stress that the statement of \Cref{cor:sotnlH} is independent of the decomposition considered for $\mathcal{H}$. In the next section, we establish the announced continuity result for solution operators for abstract time-dependent partial differential equations, that is, for evolutionary equations.

\section{Applications to Evolutionary Equations}\label{sec:AEEs}

Finally, we establish the connection between evolutionary equations and the \hyperref[def:main-topology]{parameterised nonlocal $\mathrm{H}$-topology}.

The following lemma is based on ideas obtained from~\cite{BV14,BEW2023}.

\begin{lemma}\label{le:wot-of-solutions-conserves-inequalities}
  Let $\mathcal{H}$ be a separable Hilbert space and $(T_{n})_{n\in\N}$ in $ \Lb(\mathcal{H})$ with
\begin{equation*}
  \Re T_{n}\geq c >0 \quad\text{ and }\quad
  \Re T_{n}^{-1}\geq d > 0
\end{equation*}
  for $n\in\N$. Moreover, assume that $A\colon\dom (A)\subseteq\mathcal{H}\to \mathcal{H}$ is a
  skew-selfadjoint operator and that $T\in\Lb(\mathcal{H})$ such that $0\in\rho (T+A)$. If $(T_{n}+A)^{-1}$ converges to
  $(T+A)^{-1}$ in the weak operator topology, then we obtain
  \begin{equation*}
  \Re T\geq c \quad\text{ and }\quad
  \Re T^{-1}\geq d\text{.}
\end{equation*}
\end{lemma}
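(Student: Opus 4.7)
\emph{Proof proposal.} The idea is to exploit the skew--selfadjointness of $A$ to eliminate $A$ from quadratic forms $\langle (T_n{+}A)u_n,u_n\rangle$, and then pass to the limit along $u_n\coloneqq (T_n{+}A)^{-1}\varphi\weakto u\coloneqq (T{+}A)^{-1}\varphi$. The first inequality is standard; the second one requires in addition that we can take weak limits not only of $u_n$ but also of $Au_n$.

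For the first inequality, fix $\varphi\in\mathcal{H}$. Both $T_n+A$ and $T+A$ are boundedly invertible (by assumption for $T$, and by the referenced \Cref{le:realpart-bounded-from-below-implies-invertible} for $T_n$, since $\Re T_n\geq c>0$ on $\dom(A)=\dom(T_n+A)$), so $u_n,u\in\dom(A)$ and $u_n\weakto u$ by assumption. Using $\Re\langle Av,v\rangle=0$ for $v\in\dom(A)$, one computes
\begin{equation*}
  \Re\langle\varphi,u_n\rangle \;=\;\Re\langle T_n u_n,u_n\rangle\;\geq\; c\|u_n\|^2.
\end{equation*}
The left-hand side converges to $\Re\langle\varphi,u\rangle = \Re\langle Tu,u\rangle$, while weak lower semicontinuity of the norm gives $\|u\|^2\leq\liminf\|u_n\|^2$. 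Since $u$ ranges over all of $\dom(A)$ as $\varphi$ varies, and $\dom(A)$ is dense in $\mathcal{H}$, boundedness of $T$ yields $\Re T\geq c$ on $\mathcal{H}$.

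For the second inequality, I apply $\Re T_n^{-1}\geq d$ with test vector $\eta=T_n u_n$ to obtain
\begin{equation*}
  \Re\langle\varphi,u_n\rangle\;=\;\Re\langle u_n,T_n u_n\rangle\;\geq\;d\|T_n u_n\|^2\;=\;d\|\varphi-Au_n\|^2.
\end{equation*}
In particular $(Au_n)_n$ is bounded. The main step is to show $Au_n\weakto Au$: since $A$ is skew-selfadjoint, for any $w\in\dom(A)$ we have $\langle Au_n,w\rangle=-\langle u_n,Aw\rangle\to -\langle u,Aw\rangle=\langle Au,w\rangle$, and a standard $3\varepsilon$-argument using density of $\dom(A)$ together with the uniform bound on $\|Au_n\|$ extends this to all $w\in\mathcal{H}$. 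Hence $T_n u_n=\varphi-Au_n\weakto \varphi-Au=Tu$, so $\|Tu\|^2\leq\liminf\|T_n u_n\|^2$. Taking the limit inferior in the displayed inequality yields
\begin{equation*}
  \Re\langle Tu,u\rangle\;=\;\Re\langle\varphi,u\rangle\;\geq\; d\|Tu\|^2
\end{equation*}
for every $u\in\dom(A)$, and by density and continuity of $T$ for every $u\in\mathcal{H}$. Since $\Re T\geq c>0$, \Cref{le:realpart-bounded-from-below-implies-invertible} gives $T^{-1}\in\Lb(\mathcal{H})$, so substituting $u=T^{-1}\psi$ we conclude $\Re\langle T^{-1}\psi,\psi\rangle\geq d\|\psi\|^2$ for all $\psi\in\mathcal{H}$, i.e.\ $\Re T^{-1}\geq d$.

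The main obstacle is the weak-convergence statement $Au_n\weakto Au$: naively, weak convergence of $u_n$ does not pass through the unbounded operator $A$, but the a priori bound on $\|Au_n\|$ coming from $\Re T_n^{-1}\geq d$ together with skew-selfadjointness of $A$ (so that $\dom(A^*)=\dom(A)$ is dense) makes the argument go through. Separability of $\mathcal{H}$ is not strictly needed for the argument sketched above, although it would allow one to reduce to sequential subsequence arguments in the weak operator topology if one wished to.
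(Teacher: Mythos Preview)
Your proof is correct and, in fact, slightly more streamlined than the paper's. The overall skeleton is the same---pass from $\Re\langle\varphi,u_n\rangle=\Re\langle T_n u_n,u_n\rangle$ to the limit using skew-selfadjointness---but you diverge from the paper in two places. First, where the paper avoids $\liminf$ by inserting the Cauchy--Schwarz estimate $\lVert u_n\rVert^2\geq\lvert\langle u_n,u\rangle\rvert^2/\lVert u\rVert^2$ (so that both sides of the inequality converge outright), you simply invoke weak lower semicontinuity of the norm; both work. Second, and more substantially, the paper obtains $Au_n\weakto Au$ by showing that $\big((T_n+A)^{-1}\big)_n$ is bounded in $\Lb(\mathcal{H},\dom(A))$ and then extracting weakly convergent subsequences via sequential compactness of norm balls in the weak operator topology---this is where separability of $\mathcal{H}$ enters. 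Your route is more direct: you bound $\lVert Au_n\rVert$ from $\Re T_n^{-1}\geq d$, test against $w\in\dom(A)=\dom(A^*)$, and extend by density. This genuinely bypasses the separability hypothesis, as you observe.

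One small citation slip: the invertibility of $T_n+A$ does not follow from \Cref{le:realpart-bounded-from-below-implies-invertible} (which is for bounded operators) but from its unbounded analogue \Cref{le:ubd-realpart-bounded-from-below-implies-invertible}, since $T_n+A$ is closed, densely defined, with $\dom((T_n+A)^*)=\dom(A)=\dom(T_n+A)$.
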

\begin{proof}
  $T_{n}+A$ is closed with adjoint $T^{\ast}_{n}-A$ and
\begin{equation}
  \Re\scprod{(T_{n}+A)\varphi}{\varphi}_{\mathcal{H}} =\Re\scprod{T_{n}\varphi}{\varphi}_{\mathcal{H}}\geq c\norm{\varphi}^{2}_{\mathcal{H}} \label{eq:real-part-skew-is-zero}
\end{equation}
  for all $\varphi\in\dom (A)$ and $n\in\N$.
  By virtue of \Cref{le:ubd-realpart-bounded-from-below-implies-invertible}, we infer $(T_{n}+A)^{-1}\in \Lb(\mathcal{H})$ with $\norm{(T_{n}+A)^{-1}}\leq 1/c$.
  \Cref{le:realpart-bounded-from-below-implies-invertible} yields $\norm{T_{n}}\leq 1/d$. Hence, we also get
\begin{equation}\label{eq:resolvent-bounded-in-graph-space}
  \norm{A(T_{n}+A)^{-1}}=\norm{I-T_{n}(T_{n}+A)^{-1}}\leq 1+\frac{1}{cd}
\end{equation}
for $n\in\N$. Thus, for any subsequence of $((T_{n}+A)^{-1})_{n\in\N}$, sequential compactness of operator norm balls in the weak operator topology gives us a
further subsequence that converges in  the weak operator topology on $\Lb(\mathcal{H},\dom(A))$, where the Hilbert space $\dom(A)$ is endowed with the graph inner product.
Since the weak operator limits in $\Lb(\mathcal{H},\dom(A))$ and $\Lb(\mathcal{H})$ have to coincide (
$\dom(A)$ as a Hilbert space is continuously embedded in $\mathcal{H}$), we conclude that every subsequence of $((T_{n}+A)^{-1})_{n\in\N}$ has
a further subsequence converging to $(T+A)^{-1}$ in  the weak operator topology on $\Lb(\mathcal{H},\dom(A))$. In other words, $(T_{n}+A)^{-1}$ converges to
$(T+A)^{-1}$ in  the weak operator topology on $\Lb(\mathcal{H},\dom(A))$.

We have $T_{n}(T_{n}+A)^{-1}=I-A(T_{n}+A)^{-1}$ for $n\in\N$. Therefore, $T_{n}(T_{n}+A)^{-1}$ converges to $I-A(T+A)^{-1}=T(T+A)^{-1}$ in  the weak operator topology on $\Lb(\mathcal{H})$. Furthermore, the skew-selfadjointness of $A$ implies
\begin{equation*}
  \Re\scprod{A(T_{n}+A)^{-1}\varphi}{(T_{n}+A)^{-1}\varphi}_{\mathcal{H}}= 0\text{ and }\Re\scprod{A(T+A)^{-1}\varphi}{(T+A)^{-1}\varphi}_{\mathcal{H}}= 0
  \end{equation*}
  for all $\varphi\in\mathcal{H}$ and $n\in\N$. Thus, from\begin{equation*}
  T_{n}(T_{n}+A)^{-1}+A(T_{n}+A)^{-1}=I=T(T+A)^{-1}+A(T+A)^{-1}
\end{equation*}
for all $n\in\N$, it follows
\begin{equation}\label{eq:conv-real-part-Tn-TnA}
\begin{aligned}
  \lim_{n\to\infty}\Re\scprod{T_{n}(T_{n}+A)^{-1}\varphi}{(T_{n}+A)^{-1}\varphi}_{\mathcal{H}}
  &=\lim_{n\to\infty}\Re \scprod{\varphi}{(T_{n}+A)^{-1}\varphi}_{\mathcal{H}}\\
  &=\Re \scprod{\varphi}{(T+A)^{-1}\varphi}_{\mathcal{H}}\\
  &=\Re\scprod{T(T+A)^{-1}\varphi}{(T+A)^{-1}\varphi}_{\mathcal{H}}
\end{aligned}
\end{equation}
for all $\varphi\in\mathcal{H}$. Reusing the methods employed in~\eqref{eq:cauchy-schwarz-instead-lim-inf} and~\eqref{eq:bound-on-inverse-equivalent-bound-on-operator}, we obtain
\begin{align*}
  c\norm{(T+A)^{-1}\varphi}^{2}_{\mathcal{H}}&\leq\lim_{n\to\infty} \Re\scprod{T_{n}(T_{n}+A)^{-1}\varphi}{(T_{n}+A)^{-1}\varphi}_{\mathcal{H}}\\
  &=\Re\scprod{T(T+A)^{-1}\varphi}{(T+A)^{-1}\varphi}_{\mathcal{H}}
\end{align*}
as well as
\begin{equation}\label{eq:inverse-real-part-limit-bound}
\begin{aligned}
  d\norm{T(T+A)^{-1}\varphi}^{2}_{\mathcal{H}}&\leq\lim_{n\to\infty} \Re\scprod{T_{n}(T_{n}+A)^{-1}\varphi}{(T_{n}+A)^{-1}\varphi}_{\mathcal{H}}\\
  &=\Re\scprod{T(T+A)^{-1}\varphi}{(T+A)^{-1}\varphi}_{\mathcal{H}}
\end{aligned}
\end{equation}
for all $\varphi\in \mathcal{H}$. As $(T+A)^{-1}\varphi$ ranges over the dense subspace $\dom(A)$ of $\mathcal{H}$ and as both $T$ and $T^{-1}$ are bounded
on $\mathcal{H}$, we conclude $\Re T\geq c$ and
with~\eqref{eq:bound-on-inverse-equivalent-bound-on-operator} also $\Re T^{-1}\geq d$.
\end{proof}

From now on, let $\mathcal{H}$ be a separable Hilbert space,
let $A\colon \dom(A)\subseteq \mathcal{H}\to\mathcal{H}$ be skew-selfadjoint and let $\dom A \cap (\ker A)^{\perp}$ endowed with the graph scalar product of
$A$ be compactly embedded into $\mathcal{H}$.
Recall that this compact embedding implies closedness of $\ran A$ by a standard argument (see~\cite[Lemma~4.1]{ElstGordWau2019} or the FA-Toolbox in \cite{PZ20}) and hence
$(\ker A)^{\perp} =\ran A$.
Thus, we obtain the following decomposition:
\begin{equation}\label{eq:decompA}
  \mathcal{H} = \mathopen{}\underbrace{\ker A}_{\eqqcolon\mathrlap{\mathcal{H}_{0}}} \mathclose{}\oplus \underbrace{\ran A}_{\eqqcolon\mathrlap{\mathcal{H}_{1}}}\text{,}
\end{equation}
and $\dom(A)\cap \mathcal{H}_{1}$ is compactly embedded in $\mathcal{H}_{1}$.
\begin{remark}
Clearly, $A$ itself has the form
\begin{equation*}
  \begin{pmatrix}
    0&0\\
    0&\tilde{A}
  \end{pmatrix}\colon \ker A\oplus(\dom A \cap\ran A)  \subseteq\mathcal{H}_{0}\oplus \mathcal{H}_{1}\to \mathcal{H}_{0}\oplus \mathcal{H}_{1}\text{,}
\end{equation*}
where $\tilde{A}\colon (\dom A \cap\ran A)\subseteq\mathcal{H}_{1}\to \mathcal{H}_{1}$ is the restriction of $A$.
Introducing this $\tilde{A}$ gives that $\dom \tilde{A}$ as a Hilbert space is compactly embedded in $\mathcal{H}_{1}$.
One can immediately verify that
$\tilde{A}$ is still skew-selfadjoint by showing that $(\tilde{A})^{\ast}=\widetilde{(A^{\ast})}=-\tilde{A}$.
\end{remark}

\begin{lemma}\label{le:inverse-of-t-plus-a}
  Consider an operator $T\in\Lb(\mathcal{H})$ and assume $T_{00}^{-1}\in\Lb(\mathcal{H})$ as well as $\Re (T_{11}-T_{10}T_{00}^{-1}T_{01})\geq c >0$.
  Then $(T+A)^{-1}\in \Lb(\mathcal{H})$ and this inverse reads

  \begin{equation}\label{eq:inverse-of-perturbed-block-matrix}
    \begin{pmatrix}
      T_{00}^{-1}+T_{00}^{-1}T_{01}T_A^{-1}T_{10}T_{00}^{-1} & -T_{00}^{-1}T_{01}T_A^{-1}\\
      -T_{A}^{-1}T_{10}T_{00}^{-1} & T_A^{-1}
    \end{pmatrix}\text{,}
  \end{equation}
where $T_{A}\coloneqq (T_{11}-T_{10}T_{00}^{-1}T_{01}+\tilde{A})$.
Moreover, we have
\begin{align*}
  \norm{T_{A}^{-1}}\leq\frac{1}{c}
  \quad\text{and}\quad
  \norm{\tilde{A}T_{A}^{-1}}\leq 1+\frac{\norm{T_{11}-T_{10}T_{00}^{-1}T_{01}}}{c}\text{.}
\end{align*}
\end{lemma}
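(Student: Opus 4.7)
The entire lemma hinges on the bounded invertibility of $T_{A} = (T_{11} - T_{10} T_{00}^{-1} T_{01}) + \tilde{A}$ on $\mathcal{H}_{1}$. The idea is to use the preceding remark, which tells us that $\tilde{A}$ is skew-selfadjoint on $\mathcal{H}_{1}$. Hence for $\varphi \in \dom \tilde{A}$,
\begin{equation*}
  \Re \scprod{T_{A}\varphi}{\varphi}_{\mathcal{H}_{1}} = \Re \scprod{(T_{11} - T_{10} T_{00}^{-1} T_{01})\varphi}{\varphi}_{\mathcal{H}_{1}} + \underbrace{\Re \scprod{\tilde{A}\varphi}{\varphi}_{\mathcal{H}_{1}}}_{=\,0} \geq c \norm{\varphi}_{\mathcal{H}_{1}}^{2}.
\end{equation*}
The same estimate holds for $T_{A}^{\ast} = (T_{11} - T_{10} T_{00}^{-1} T_{01})^{\ast} - \tilde{A}$. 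Thus, invoking the appendix result \Cref{le:ubd-realpart-bounded-from-below-implies-invertible} on unbounded operators with coercive real parts (applied on the Hilbert space $\mathcal{H}_{1}$), we obtain $T_{A}^{-1} \in \Lb(\mathcal{H}_{1})$ with $\norm{T_{A}^{-1}} \leq 1/c$.

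\textbf{Block factorisation.} Once $T_{A}^{-1}$ is available, I would establish the formula for $(T+A)^{-1}$ by the standard block LDU-style decomposition. Writing
\begin{equation*}
  T+A = \begin{pmatrix} I & 0 \\ T_{10}T_{00}^{-1} & I \end{pmatrix}
        \begin{pmatrix} T_{00} & 0 \\ 0 & T_{A} \end{pmatrix}
        \begin{pmatrix} I & T_{00}^{-1}T_{01} \\ 0 & I \end{pmatrix}
\end{equation*}
on $\dom(A) = \mathcal{H}_{0} \oplus \dom \tilde{A}$, each of the three factors is boundedly invertible: the outer factors are unipotent triangular with bounded entries (note $T_{10}, T_{01}, T_{00}^{-1}$ are all bounded), and the middle factor inverts block-diagonally to $\operatorname{diag}(T_{00}^{-1}, T_{A}^{-1})$. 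Inverting in reverse order and multiplying out yields exactly \eqref{eq:inverse-of-perturbed-block-matrix}. Alternatively, one can simply multiply the claimed inverse against $T+A$ from both sides and verify it gives the identity, using that $T_{A}^{-1}$ maps $\mathcal{H}_{1}$ into $\dom \tilde{A}$, which guarantees that every composition above is well-defined.

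\textbf{Bound on $\tilde{A} T_{A}^{-1}$.} For the last estimate, I would exploit the identity
\begin{equation*}
  \tilde{A} T_{A}^{-1} = \big(T_{A} - (T_{11} - T_{10}T_{00}^{-1}T_{01})\big) T_{A}^{-1} = I - (T_{11} - T_{10}T_{00}^{-1}T_{01}) T_{A}^{-1},
\end{equation*}
which is meaningful since $\ran T_{A}^{-1} \subseteq \dom \tilde{A}$. The right-hand side is patently a bounded operator on $\mathcal{H}_{1}$, and the triangle inequality together with $\norm{T_{A}^{-1}} \leq 1/c$ delivers the claimed bound $\norm{\tilde{A} T_{A}^{-1}} \leq 1 + \norm{T_{11} - T_{10}T_{00}^{-1}T_{01}}/c$.

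\textbf{Expected difficulty.} The only genuinely delicate step is the invertibility of $T_{A}$, because $\tilde{A}$ is unbounded; once an appropriate Lax--Milgram-type statement for coercive (not necessarily accretive in the classical sense, but with nonnegative real part plus a strictly positive real part in the ``symmetric'' bounded piece) unbounded perturbations is cited, the remainder of the lemma is a purely algebraic (and well-definedness) exercise. The factorisation and the norm bounds are then routine.
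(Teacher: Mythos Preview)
Your proposal is correct and follows essentially the same route as the paper: you invoke \Cref{le:ubd-realpart-bounded-from-below-implies-invertible} for the invertibility of $T_A$ via skew-selfadjointness of $\tilde{A}$, verify the block inverse formula (the paper opts for direct two-sided multiplication, which you also mention as an alternative to the LDU factorisation), and derive the graph-norm bound from the identity $\tilde{A}T_A^{-1}=I-(T_{11}-T_{10}T_{00}^{-1}T_{01})T_A^{-1}$, exactly as the paper does by reference to \eqref{eq:resolvent-bounded-in-graph-space}.
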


\begin{proof}
  Using the decomposition~\eqref{eq:decompA} and \Cref{le:components-holomorphic}, we can write
  \begin{equation*}
    T + A =
    \begin{pmatrix}
      T_{00}&T_{01}\\
      T_{10}&T_{11}+\tilde{A}
    \end{pmatrix}
    \colon \mathcal{H}_{0}\oplus(\dom A \cap\mathcal{H}_{1}) \subseteq\mathcal{H}_{0}\oplus \mathcal{H}_{1}
    \to \mathcal{H}_{0}\oplus \mathcal{H}_{1}
  \end{equation*}
  with all the components of $T$ being bounded by $\norm{T}$.
  Due to \Cref{le:ubd-realpart-bounded-from-below-implies-invertible} and the conditions imposed on $T$ and $A$,
  $T_{11}-T_{10}T_{00}^{-1}T_{01}+\tilde{A}$ is boundedly invertible on $\mathcal{H}_{1}$  with
  $\norm{(T_{11}-T_{10}T_{00}^{-1}T_{01}+\tilde{A})^{-1}}_{\mathcal{H}_{1}}\leq 1/c$
  (cf.~\eqref{eq:real-part-skew-is-zero}).
  Therefore,~\eqref{eq:inverse-of-perturbed-block-matrix} is an element of $\Lb(\mathcal{H})$. Furthermore,~\eqref{eq:inverse-of-perturbed-block-matrix}
  maps from $\mathcal{H}_{0}\oplus \mathcal{H}_{1}$ to $\mathcal{H}_{0}\oplus(\dom A \cap\mathcal{H}_{1})$. It remains to verify that
  applying~\eqref{eq:inverse-of-perturbed-block-matrix} to $T+A$ from the right yields the identity on $\mathcal{H}$, and that
  applying~\eqref{eq:inverse-of-perturbed-block-matrix} to $T+A$ from the left yields the identity on $\mathcal{H}_{0}\oplus(\dom A \cap\mathcal{H}_{1})$. These
  are two short and straightforward calculations. The remaining inequality follows
  similarly to~\eqref{eq:resolvent-bounded-in-graph-space}.
\end{proof}

The combination of the definition of the Schur topology together with the compactness assumption on $A$ leads to the following fundamental convergence statement underlying our main result on evolutionary equations.

\begin{lemma}\label{le:nonlocal-h-implies-solution-conv}
  Let $(T_{n})_{n\in\N}$ be a sequence in $\mathcal{M}(\alpha)$ converging
  to $T\in \mathcal{M}(\alpha)$ w.r.t.\ \hyperref[def:nHc]{$\tau(\mathcal{H}_0,\mathcal{H}_1)=\tau(\ker(A),\ran(A))$}. Then,
  $(T_{n}+A)^{-1}$ converges to $(T+A)^{-1}$ in the weak operator topology on $\Lb(\mathcal{H})$.
\end{lemma}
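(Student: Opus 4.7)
The plan is to leverage the explicit block representation of $(T+A)^{-1}$ from \Cref{le:inverse-of-t-plus-a} and to deduce the weak operator convergence of each of the four block entries separately. The key analytic ingredient, on which everything hinges, is that the compact embedding $\dom \tilde{A} \hookrightarrow \mathcal{H}_1$ upgrades weak convergence of right-hand sides to strong convergence of solutions of the Schur-complement equation.

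\textbf{Step 1 (Setup).} Using \Cref{le:inverse-of-t-plus-a}, I would write
\begin{equation*}
  (T_n+A)^{-1} =
  \begin{pmatrix}
     T_{n,00}^{-1}+T_{n,00}^{-1}T_{n,01}T_{n,A}^{-1}T_{n,10}T_{n,00}^{-1} & -T_{n,00}^{-1}T_{n,01}T_{n,A}^{-1}\\
     -T_{n,A}^{-1}T_{n,10}T_{n,00}^{-1} & T_{n,A}^{-1}
  \end{pmatrix},
\end{equation*}
where $T_{n,A} = S_n + \tilde A$ and $S_n \coloneqq T_{n,11}-T_{n,10}T_{n,00}^{-1}T_{n,01}$, with the analogous formulas and notations for the limit $T$. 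The hypothesis $T_n\to T$ in $\tau(\mathcal{H}_0,\mathcal{H}_1)$ means exactly that $T_{n,00}^{-1}\to T_{00}^{-1}$, $T_{n,10}T_{n,00}^{-1}\to T_{10}T_{00}^{-1}$, $T_{n,00}^{-1}T_{n,01}\to T_{00}^{-1}T_{01}$ and $S_n\to S$ in WOT, and these four limit operators are uniformly norm-bounded via the constants $\alpha_{ij}$.

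\textbf{Step 2 (Compactness step — the core lemma).} I will establish the following claim: if $z_n\weakto z$ weakly in $\mathcal{H}_1$, then $w_n\coloneqq T_{n,A}^{-1}z_n \to w \coloneqq T_A^{-1} z$ strongly in $\mathcal{H}_1$. Testing $T_{n,A}w_n = z_n$ against $w_n$ and using $\Re S_n\ge \alpha_{00}$ together with the skew-selfadjointness of $\tilde A$, one obtains $\alpha_{00}\|w_n\|^2 \le \Re\scprod{z_n}{w_n}_{\mathcal{H}_1}$, so $\|w_n\|$ is bounded. Then $\tilde{A}w_n = z_n - S_n w_n$ is bounded too, so $(w_n)$ is bounded in the graph norm of $\tilde A$. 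By the compact embedding $\dom \tilde A \cpt \mathcal{H}_1$, every subsequence has a further subsequence converging strongly in $\mathcal{H}_1$, say to some $w'$; along such a subsequence $\tilde{A}w_n \weakto \tilde{A}w'$ (graph-closedness via weak convergence in the Hilbert space $\dom \tilde A$), and $S_n w_n \weakto Sw'$ because $S_n\to S$ in WOT while $w_n$ converges strongly (WOT times strong gives weak in the limit). Passing to the limit in $S_n w_n + \tilde A w_n = z_n$ identifies $w' = T_A^{-1}z = w$, and uniqueness of the limit yields convergence of the whole sequence.

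\textbf{Step 3 (WOT of each block entry).} I then feed the four WOT convergences from Step 1 into Step 2:
\begin{itemize}
  \item Entry $(2,2)$: $T_{n,A}^{-1}\varphi \to T_A^{-1}\varphi$ strongly (Step 2 with $z_n=\varphi$ constant), in particular in WOT.
  \item Entry $(2,1)$: for $\varphi\in\mathcal{H}_0$, set $z_n = T_{n,10}T_{n,00}^{-1}\varphi\weakto T_{10}T_{00}^{-1}\varphi$; Step 2 gives strong convergence of $T_{n,A}^{-1}z_n$, hence WOT convergence of the entry.
  \item Entry $(1,2)$: for $\varphi\in\mathcal{H}_1$, $u_n\coloneqq T_{n,A}^{-1}\varphi\to T_A^{-1}\varphi$ strongly by Step 2; splitting $T_{n,00}^{-1}T_{n,01}u_n = T_{n,00}^{-1}T_{n,01}u + T_{n,00}^{-1}T_{n,01}(u_n-u)$ and using $\norm{T_{n,00}^{-1}T_{n,01}}\le\alpha_{01}$ together with WOT convergence of $T_{n,00}^{-1}T_{n,01}$ yields weak convergence to $T_{00}^{-1}T_{01}T_A^{-1}\varphi$.
  \item Entry $(1,1)$: combine WOT convergence of $T_{n,00}^{-1}$ (the isolated summand) with the $(2,1)$-plus-$(1,2)$ argument applied in sequence: first strong convergence of $w_n = T_{n,A}^{-1}T_{n,10}T_{n,00}^{-1}\varphi$ by Step 2, then the splitting trick of entry $(1,2)$ applied to $T_{n,00}^{-1}T_{n,01}w_n$.
\end{itemize}

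\textbf{Anticipated obstacle.} The only real difficulty is Step 2; once the strong convergence of $T_{n,A}^{-1}$ on weakly convergent inputs is secured, the remaining bookkeeping is a matter of repeatedly applying the elementary fact that a WOT-convergent uniformly bounded sequence of operators, applied to a strongly convergent sequence of vectors, converges weakly. The subtlety in Step 2 lies in correctly verifying that $\tilde A w_n$ converges weakly to $\tilde A w'$ along a graph-bounded strongly convergent subsequence; this is where the skew-selfadjointness of $\tilde{A}$ (and hence its closedness together with weak–weak closedness of its graph in $\mathcal{H}_1\times\mathcal{H}_1$) is used.
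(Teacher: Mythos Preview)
Your proposal is correct and follows essentially the same route as the paper: both invoke the block formula of \Cref{le:inverse-of-t-plus-a}, obtain graph-norm bounds for the Schur-complement solutions, use the compact embedding $\dom\tilde A\hookrightarrow\mathcal{H}_1$ to upgrade weak to strong convergence, and then combine this with the elementary ``WOT-convergent bounded operators applied to strongly convergent vectors converge weakly'' observation. The only difference is organisational: you abstract the compactness step into a stand-alone claim (Step~2) for general weakly convergent right-hand sides and then feed each of the four block entries through it, whereas the paper applies the same argument directly to the full input $\varphi_0+\varphi_1$, first extracting the $\mathcal{H}_1$-component and then the $\mathcal{H}_0$-component via one subsequence pass.
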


\begin{proof}
  In view of \Cref{le:realpart-bounded-from-below-implies-invertible}, we can write both
  $(T_{n}+A)^{-1}$ and $(T+A)^{-1}$ in the form of~\eqref{eq:inverse-of-perturbed-block-matrix}.

  Consider any subsequence of $(T_{n}+A)^{-1}$. We will not introduce a new index for this subsequence. For $\varphi_{0}+\varphi_{1}\in\mathcal{H}_{0}\oplus\mathcal{H}_{1}$
  and $n\in\N$, we have
  \begin{equation*}
    \norm{\varphi_{1}-T_{n,10}T^{-1}_{n,00}\varphi_{0}}_{\mathcal{H}_{1}}\leq
    \norm{\varphi_{1}}_{\mathcal{H}_{1}}+\alpha_{10}\norm{\varphi_{0}}_{\mathcal{H}_{0}}\text{.}
  \end{equation*}
  Moreover, \Cref{le:inverse-of-t-plus-a} yields
  \begin{align*}
  \norm{(T_{n,11}-T_{n,10}T_{n,00}^{-1}T_{n,01}+\tilde{A})^{-1}}&\leq\frac{1}{\alpha_{00}}\\
  \mathllap{\text{and}}\quad\norm{\tilde{A}(T_{n,11}-T_{n,10}T_{n,00}^{-1}T_{n,01}+\tilde{A})^{-1}}&\leq 1+\frac{\alpha_{11}}{\alpha_{00}}\text{.}
  \end{align*}
  Thus, denoting 
  \begin{equation*}
    (u_{n,1})_{n\in\N}\coloneqq \big((T_{n,11}-T_{n,10}T_{n,00}^{-1}T_{n,01}+\tilde{A})^{-1}(\varphi_{1}-T_{n,10}T^{-1}_{n,00}\varphi_{0})\big)_{n\in\N}
  \end{equation*}
  and using
  \begin{equation*}
    \norm{T_{n,11}-T_{n,10}T_{n,00}^{-1}T_{n,01}}\leq\alpha_{11}
  \end{equation*}
  as well as
  \begin{equation*}
\norm{T_{n,10}T_{n,00}^{-1}}\leq \alpha_{10}
\end{equation*}
  for $n\in\N$, we deduce that both $(u_{n,1})_{n\in \N}$ and
  \begin{equation*}
    (\tilde{A}u_{n,1})_{n\in\N}= (\varphi_{1}-T_{n,10}T^{-1}_{n,00}\varphi_{0}- (T_{n,11}-T_{n,10}T_{n,00}^{-1}T_{n,01})u_{n,1})_{n\in\N}
  \end{equation*}
  are bounded sequences in $\mathcal{H}_1$.
  Hence, we may choose a subsequence (not relabelled) such that $(u_{n,1})_{n\in \N}$ weakly converges to some $u_1$ in $\dom(\tilde{A})$ endowed with the graph inner product. Since the continuity of $\tilde{A}\colon \dom(\tilde{A})\to \mathcal{H}_1$ (w.r.t.\ the graph norm) implies its weak continuity, the sequence $\tilde{A}u_{n,1}$ weakly converges to $\tilde{A}u_1$.
  The compact embedding of $\dom(\tilde{A})$ into $\mathcal{H}_{1}$ yields $\mathcal{H}_1$-convergence of (a subsequence of) $(u_{n,1})_{n\in \N}$ to $u_{1}\in \mathcal{H}_{1}$.
  Next, consider $((T_{n,11}-T_{n,10}T_{n,00}^{-1}T_{n,01})u_{n,1})_{n\in\N}$. As this is
  a uniformly bounded sequence of operators converging in the weak operator topology (\hyperref[def:nHc]{$\tau(\mathcal{H}_0,\mathcal{H}_1)$}-convergence)
  applied to a convergent sequence in $\mathcal{H}_{1}$, the sequence altogether weakly converges to $(T_{11}-T_{10}T_{00}^{-1}T_{01})u_{1}$.
  All in all, we have proven
  \begin{equation*}
    \tilde{A}u_{1}=\varphi_{1}-T_{10}T^{-1}_{00}\varphi_{0}- (T_{11}-T_{10}T_{00}^{-1}T_{01})u_{1}\text{,}
  \end{equation*}
   i.e.,
\begin{equation*}
  u_{1}= (T_{11}-T_{10}T_{00}^{-1}T_{01}+\tilde{A})^{-1}(\varphi_{1}-T_{10}T^{-1}_{00}\varphi_{0})\text{.}
\end{equation*}
In other words, the $\mathcal{H}_{1}$-component of $(T_{n}+A)^{-1}(\varphi_{0}+\varphi_{1})$ converges
to the $\mathcal{H}_{1}$-component of $(T+A)^{-1}(\varphi_{0}+\varphi_{1})$.

The convergence of $(u_{n,1})_{n\in\N}$, \hyperref[def:nHc]{$\tau(\mathcal{H}_0,\mathcal{H}_1)$}-convergence and the uniform bound
\begin{equation*}
\norm{T_{n,00}^{-1}T_{n,01}}\leq \alpha_{01}
\end{equation*}
for $n\in\N$ yield weak convergence of $T_{n,00}^{-1}\varphi_{0}-T_{n,00}^{-1}T_{n,01}u_{n,1}$
to $T_{00}^{-1}\varphi_{0}-T_{00}^{-1}T_{01}u_{1}$. In other words, the $\mathcal{H}_{0}$-component of $(T_{n}+A)^{-1}(\varphi_{0}+\varphi_{1})$ weakly converges 
to the $\mathcal{H}_0$-component of $(T+A)^{-1}(\varphi_{0}+\varphi_{1})$.

To sum up, we have shown that every subsequence of $(T_{n}+A)^{-1}$
has a further subsequence that converges to $(T+A)^{-1}$ in the weak
operator topology.
\end{proof}

We are now in the position to state and prove the main result of this section.

\begin{theorem}\label{th:second-main-theorem}
  Consider $\nu_{0}> 0$ and a sequence of material laws $(M_{n})_{n\in\N}$ with
  $\C_{\Re>\nu_{0}}$ in their domain. Furthermore, assume there exist $c,d>0$ with
  \begin{equation*}
    \Re zM_{n}(z) \geq c \quad\text{and}\quad \norm{M_{n}(z)} \leq d
\end{equation*}
for all $z\in\C_{\Re>\nu_{0}}$ and all $n\in\N$. This implies $\absbd(M_{n})\leq\nu_{0}$ for all $n\in\N$.
If there exists an $M\colon \C_{\Re>\nu_{0}}\to\mathcal{M}(\ker(A),\ran(A))$ with
$\norm{M(z)} \leq d$ for $z\in\C_{\Re >\nu_{0}}$ and  $(M_{n})_{n\in\N}$ converges to $M$
pointwise in \hyperref[def:nHc]{$\tau(\ker(A),\ran(A))$}, then $M$ is a material law with
\begin{equation}\label{eq:material-law-limit-bounds}
    \Re zM(z)\geq c
\end{equation}
for all $z\in\C_{\Re>\nu_{0}}$ and $\absbd(M)=\nu_{0}$.
  Moreover, we have
  \begin{equation}\label{eq:material-law-resolvent-convergence-t}
    \cl{\partial_{t}M_{n}(\partial_{t})+A}^{-1} \to \cl{\partial_{t}M(\partial_{t})+A}^{-1}
  \end{equation}
  in the weak operator topology on $\Lb(\Lp{2,\nu}(\R,\mathcal{H}))$ for every $\nu>\nu_{0}$.
\end{theorem}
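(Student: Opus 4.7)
The plan is to establish~\eqref{eq:material-law-resolvent-convergence-t} by conjugating with the Fourier--Laplace transform $\mathcal{L}_\nu$ into a multiplication operator on $\Lp{2}(\R;\mathcal{H})$, treating the multiplier pointwise in $\xi$ via \Cref{le:nonlocal-h-implies-solution-conv}, and closing with Lebesgue dominated convergence. The required holomorphy of $M$ and the bound $\Re zM(z)\geq c$ are harvested along the way.

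\emph{Pointwise resolvent convergence.} Fix $z\in\C_{\Re>\nu_0}$ and set $T_n\coloneqq zM_n(z)$, $T\coloneqq zM(z)$. Scalar multiplication by $z$ rescales Schur components only by factors $z^{\pm1}$ and leaves $\Lambda_{01},\Lambda_{10}$ untouched, so the pointwise $\tau(\ker A,\ran A)$-convergence $M_n(z)\to M(z)$ transfers to $T_n\to T$ in the same topology. The hypotheses $\Re zM_n(z)\geq c$, $\|M_n(z)\|\leq d$ together with \Cref{le:realpart-bounded-from-below-implies-invertible} place $T_n\in\mathcal{M}(\alpha_z)$ uniformly in $n$ for some $\alpha_z$ with entries depending polynomially on $|z|,c,d$; the analogous statement for $T$ follows by a closure argument combining weak-operator lower semicontinuity of $\Re\cdot$ and of the operator norm on each Schur component with the algebraic identity used in~\eqref{eq:bound-on-inverse-equivalent-bound-on-operator} to recover the lower bounds on $\Re T_{00}$ and $\Re(T_{11}-T_{10}T_{00}^{-1}T_{01})$ from the already-transferred $\Re(\cdot)^{-1}$-bounds and the uniform norm bounds $\|T_{00}^{-1}\|,\|(T_{11}-T_{10}T_{00}^{-1}T_{01})^{-1}\|\leq1/c$. \Cref{le:nonlocal-h-implies-solution-conv} then yields $(T_n+A)^{-1}\to(T+A)^{-1}$ in the weak operator topology of $\Lb(\mathcal{H})$, and since $A$ is skew-adjoint, $\Re(T_n+A)\geq c$ entails the uniform bound $\|(T_n+A)^{-1}\|\leq1/c$. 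Applying \Cref{le:wot-of-solutions-conserves-inequalities} with these inputs produces $\Re zM(z)\geq c$.

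\emph{Holomorphy of $M$ and lifting.} Holomorphy of $M\colon\C_{\Re>\nu_0}\to\Lb(\mathcal{H})$ is deduced by Vitali applied to each Schur component: the $\Lambda_{ij}(M_n)$ are holomorphic by \Cref{le:components-holomorphic,le:inverse-holomorphic,le:product-holomorphic}, the scalar pairings $\scprod{\Lambda_{ij}(M_n)(\cdot)\varphi}{\psi}_\mathcal{H}$ are locally uniformly bounded on $\C_{\Re>\nu_0}$ by the estimates above, and pointwise convergence to $\scprod{\Lambda_{ij}(M)(\cdot)\varphi}{\psi}_\mathcal{H}$ is built into $\tau$-convergence. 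Vitali combined with \Cref{th:very-weak-holomorphy-for-Lb-and-Ses-implies-holomorphy} and \Cref{th:weak-is-strong-holomorphy} yields holomorphy of each $\Lambda_{ij}(M)$, and the reconstruction formula from the proof of \Cref{th:main-theorem} (invoking the bounded invertibility of $M_{00}(z)$ granted by $M(z)\in\mathcal{M}(\ker A,\ran A)$) gives holomorphy of $M$ itself; together with $\|M(z)\|\leq d$ this identifies $M$ as a material law with $\absbd(M)=\nu_0$, so \Cref{th:pt} is applicable for every $\nu>\nu_0$. Finally, conjugating $\cl{\partial_t M_n(\partial_t)+A}^{-1}$ with $\mathcal{L}_\nu$ yields multiplication on $\Lp{2}(\R;\mathcal{H})$ by $\xi\mapsto((\iu\xi+\nu)M_n(\iu\xi+\nu)+A)^{-1}$; for $g,h\in\Lp{2}(\R;\mathcal{H})$, the integrands $\scprod{((\iu\xi+\nu)M_n(\iu\xi+\nu)+A)^{-1}g(\xi)}{h(\xi)}_\mathcal{H}$ converge pointwise in $\xi$ by the previous step and are uniformly dominated by $c^{-1}\|g(\xi)\|_\mathcal{H}\|h(\xi)\|_\mathcal{H}\in\Lp{1}(\R)$ (Cauchy--Schwarz), and Lebesgue dominated convergence concludes~\eqref{eq:material-law-resolvent-convergence-t}.

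The principal obstacle is the verification $T=zM(z)\in\mathcal{M}(\alpha_z)$: the pointwise $\tau$-convergence directly supplies only weak-operator limits of Schur components, so while lower semicontinuity immediately transfers the upper norm bounds, the $\Re(\cdot)^{-1}$-bounds and $\Re(T_{11}-T_{10}T_{00}^{-1}T_{01})\geq\alpha_{00}$, the reverse bounds $\Re T_{00}\geq\alpha_{00}$ and $\Re(T_{11}-T_{10}T_{00}^{-1}T_{01})^{-1}\geq 1/\alpha_{11}$ must be recovered by the algebraic duality $\Re A\geq(\Re A^{-1})/\|A^{-1}\|^2$ applied in the limit, using the uniform norm bounds inherited from $\Re T_n\geq c$.
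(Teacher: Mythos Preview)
Your proof is correct and follows the paper's architecture: pointwise resolvent convergence via \Cref{le:nonlocal-h-implies-solution-conv}, then $\Re zM(z)\geq c$ from \Cref{le:wot-of-solutions-conserves-inequalities}, then Fourier--Laplace plus dominated convergence. The one structural difference lies in how holomorphy of $M$ and the membership $zM(z)\in\mathcal{M}(\alpha_z)$ are established. The paper restricts to bounded strips $\C^{\abs{\Im}<\mu}_{\mu>\Re>\nu_0}$ (so that the $\alpha$-bounds become uniform in $z$) and invokes \Cref{le:paramet-equiv-pointw-nonlocal-h-conv}, hence the compactness \Cref{th:main-theorem}, to obtain both at once; you instead work pointwise with $z$-dependent $\alpha_z$, verify closure of $\mathcal{M}(\alpha_z)$ by hand, and recover holomorphy via Vitali on scalar Schur-component pairings followed by reassembly through the formula in the proof of \Cref{th:main-theorem}. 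Both routes work; the paper's showcases the parametrised machinery of \Cref{sec:holwop}, while yours is more self-contained and avoids the bounded-strip detour. One remark on your closing paragraph: the ``algebraic duality'' $\Re A\geq(\Re A^{-1})/\norm{A^{-1}}^{2}$ you invoke there loses a constant and would only place $T$ in some larger $\mathcal{M}(\alpha'_z)$. That is harmless, since the proof of \Cref{le:nonlocal-h-implies-solution-conv} only requires of $T$ the hypotheses of \Cref{le:inverse-of-t-plus-a}, which already follow from the directly transferred bounds; but note that the Cauchy--Schwarz device of \Cref{le:compactness-of-schur-components} (which you reference earlier via~\eqref{eq:bound-on-inverse-equivalent-bound-on-operator}) does recover the sharp $\alpha_z$ if you want it.
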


\begin{proof}
  \Cref{le:realpart-bounded-from-below-implies-invertible} yields
  \begin{equation*}
    \Re (zM_{n}(z))^{-1}\geq c\norm{zM_{n}(z)}^{-2}\geq cd^{-2}\abs{z}^{-2}
  \end{equation*}
  for all $z\in\C_{\Re>\nu_{0}}$ and all $n\in\N$.
  Fix any $\mu> \nu_{0}$. Then by easy calculations (\Cref{le:components-holomorphic,le:realpart-bounded-from-below-implies-invertible,le:dohnal}), we find an
  $\alpha\in (0,\infty)^{2\times 2}$ such that $(z\mapsto zM_{n}(z))\in\mathfrak{M}(\C^{\abs{\Im}<\mu}_{\mu >\Re>\nu_{0}},\alpha)$
  for $n\in\N$. \Cref{le:paramet-equiv-pointw-nonlocal-h-conv} yields holomorphicity of $M$ and $zM(z)\in\mathcal{M}(\alpha)$  on $\C^{\abs{\Im}<\mu}_{\mu >\Re>\nu_{0}}$.
  Since $\mu>\nu_{0}$ was arbitrary, we obtain holomorphicity of $M$ on $\C_{\Re>\nu_{0}}$.

  In particular, we have proven $z M_{n}(z)\in\mathcal{M}(\alpha)$ for all $n\in\N$ and $z M(z)\in\mathcal{M}(\alpha)$ for each $z\in \C_{\Re>\nu_{0}}$ (with the $\alpha$ only depending on $z$). Thus, \Cref{le:nonlocal-h-implies-solution-conv} yields
  \begin{equation}\label{eq:material-law-resolvent-convergence-z}
    (zM_{n}(z)+A)^{-1}\to (zM(z)+A)^{-1}
  \end{equation}
  in the weak operator topology for each $z\in \C_{\Re>\nu_{0}}$, and \Cref{le:wot-of-solutions-conserves-inequalities}
  proves~\eqref{eq:material-law-limit-bounds}. This means, \Cref{th:pt} is applicable to both $M_{n}$ for $n\in\N$ and to $M$.
  Fourier--Laplace transforming~\eqref{eq:material-law-resolvent-convergence-z},
  we get~\eqref{eq:material-law-resolvent-convergence-t}.
\end{proof}

\begin{remark}\label{re:second-main-theorem}
  It is possible to replace the uniform boundedness condition imposed on $(M_{n})_{n\in\N}$ and
  its limit $M$ in \Cref{th:second-main-theorem} with
  \begin{equation}\label{eq:alternative-boundedness-sec-main-th}
    \Re \scprod{M_{n}(z)\varphi}{\varphi}_{\mathcal{H}}\geq \frac{1}{d} \norm{M_{n}(z)\varphi}_{\mathcal{H}}^{2}
  \end{equation}
  for all $z\in\C_{\Re >\nu_{0}}$ and $n\in\N$:

  First, note that
  $\Re z M_{n}(z)\geq c$ and \Cref{le:realpart-bounded-from-below-implies-invertible}
  show that $M_{n}(z)$ is boundedly invertible for all $z\in\C_{\Re >\nu_{0}}$ and $n\in\N$.
  Hence looking at~\eqref{eq:bound-on-inverse-equivalent-bound-on-operator}, we see that~\eqref{eq:alternative-boundedness-sec-main-th} is equivalent to $\Re (M_{n}(z))^{-1}\geq 1/d$
  and with \Cref{le:realpart-bounded-from-below-implies-invertible} we even obtain $\norm{M_{n}(z)}\leq d$ for all $z\in\C_{\Re >\nu_{0}}$ and $n\in\N$. Therefore, we can apply the proof of
  \Cref{th:second-main-theorem} until we get~\eqref{eq:material-law-resolvent-convergence-z}
  and~\eqref{eq:material-law-limit-bounds}.

  In order to obtain~\eqref{eq:material-law-resolvent-convergence-t}, we need to apply \Cref{th:pt}, which means, it remains to prove the uniform boundedness of $M$. Since
  we now have a compactness condition on $A$, we can refine the argument~\eqref{eq:conv-real-part-Tn-TnA}. We have
  \begin{subequations}
    \begin{align}
        &\scprod*{zM_{n}(z)(zM_{n}(z)+A)^{-1}\varphi}{(zM_{n}(z)+A)^{-1}\varphi}_{\mathcal{H}}\label{eq:refined-arg-1} \\
        &\hspace{1.1cm}\mathclose{}+
          \scprod*{A(zM_{n}(z)+A)^{-1}\varphi}{(zM_{n}(z)+A)^{-1}\varphi}_{\mathcal{H}} \label{eq:refined-arg-2} \\
        &\hspace{3.1cm}\mathclose{}=
          \scprod*{zM(z)(zM(z)+A)^{-1}\varphi}{(zM_{n}(z)+A)^{-1}\varphi}_{\mathcal{H}} \label{eq:refined-arg-3} \\
        &\phantom{\mathopen{}=\mathclose{}}\hspace{4.2cm}\mathclose{}+
          \scprod*{A(zM(z)+A)^{-1}\varphi}{(zM_{n}(z)+A)^{-1}\varphi}_{\mathcal{H}} \label{eq:refined-arg-4}
    \end{align}
  \end{subequations}
  for $\varphi\in\mathcal{H}$. \eqref{eq:refined-arg-3} and~\eqref{eq:refined-arg-4} converge
  due to~\eqref{eq:material-law-resolvent-convergence-z}.

  For~\eqref{eq:refined-arg-2}, we
  recall that we have proven weak convergence of $A(zM_{n}(z)+A)^{-1}\varphi$ to
  $A(zM(z)+A)^{-1}\varphi$ in the first paragraph of the proof of \Cref{le:wot-of-solutions-conserves-inequalities}. Moreover, $(zM_{n}(z)+A)^{-1}\varphi$ in the
  second entry of the inner product can be replaced with its projection onto
  $\mathcal{H}_{1}=\ran A$ as it is multiplied with $A(zM_{n}(z)+A)^{-1}\varphi\in\ran A$. Obviously, this projected sequence converges weakly in
  the Hilbert space $\dom(A)\cap \mathcal{H}_{1}$.
  As a consequence, we get strong convergence to the projection of $(zM(z)+A)^{-1}\varphi$ onto $\mathcal{H}_{1}$ by the compact
  embedding of $\dom(A)\cap \mathcal{H}_{1}$ into $\mathcal{H}$. Alltogether, that
  means convergence of~\eqref{eq:refined-arg-2} to $\scprod{A(zM(z)+A)^{-1}\varphi}{(zM(z)+A)^{-1}\varphi}_{\mathcal{H}}$
  and thus~\eqref{eq:refined-arg-1} converges to
  \begin{equation*}
    \scprod{zM(z)(zM(z)+A)^{-1}\varphi}{(zM(z)+A)^{-1}\varphi}_{\mathcal{H}}\text{.}
  \end{equation*}
  Dividing by $z$ and repeating the argument~\eqref{eq:inverse-real-part-limit-bound},
  we conclude $\Re (M(z))^{-1}\geq 1/d$
  and with \Cref{le:realpart-bounded-from-below-implies-invertible} even $\norm{M(z)}\leq d$ for all $z\in\C_{\Re >\nu_{0}}$.
\end{remark}

\section{Examples}\label{sec:exs}

\subsection{On a model for cell migration}

In \cite{KPSZ20}, the authors introduce and analyse a nonlocal model for cell migration. Here, we are interested to exemplify our previous findings. Hence, we only focus on an autonomous, linear variant of the equation in \cite{KPSZ20}. However, we may allow for matrix-valued coefficients here. For this, let throughout $\Omega\subseteq \R^n$ be a bounded, weak Lipschitz domain with continuous boundary, and introduce, for $r\geq 0$ and $q\in \Lp{2}(\Omega)^n$, the linear operator $\mathcal{S}_r$ given by
\begin{equation*}
   \mathcal{S}_r q (x)\coloneqq n\int_0^1\frac{1}{|S_1|}\int_{S_1} \langle q(x+r sy),y\rangle_{\R^n}y \dx[\sigma (y)] \dx[s] \quad(x\in \Omega),
\end{equation*}
where $q$ is extended to $\R^n$ via $0$, $S_{1}$ denotes the sphere with radius $1$ and $\sigma$ its surface measure.   According to \cite{KPSZ20}, we have $\mathcal{S}_r\in \Lb(\Lp{2}(\Omega)^n)$ for all $r\geq 0$. Moreover, the operator family $(\mathcal{S}_r)_{0\leq r\leq 1}$ is a special case of an approximation of unity.
\begin{definition} We call $(\mathcal{R}_r)_{0\leq r\leq 1}$ in $\Lb(\Lp{2}(\Omega)^n)$ an \emph{approximation of unity}, if $\sup_{0\leq r\leq 1}\norm{\mathcal{R}_r} < \infty$ and $\mathcal{R}_r\to 1$ in the strong operator topology as $r\to 0$.
\end{definition}
Note that the example $(\mathcal{T}_r)_r$ treated in \cite{KPSZ20} is, too, an approximation of unity.

In the following, let $(\mathcal{R}_r)_r$ be an approximation of unity. Then consider $a_1,a_2, a_3 \in M(\alpha,\beta;\Omega)$ for some $0<\alpha<\beta$ and consider, for $0\leq r\leq 1$, the following equation
\begin{equation*}
    \partial_t c_r - \div (a_1-a_2\mathcal{R}_r a_3 )\grad c_r =f\in \Lp[\nu]{2}(\R;\Lp{2}(\Omega)),
\end{equation*}
with $f$ and $\nu>0$ fixed. Introducing $q_r\coloneqq -A_r\grad c_r$ with $A_r\coloneqq (a_1-a_2\mathcal{R}_r a_3 )$ and assuming homogeneous Neumann boundary conditions for $q_r$, we rewrite the system as an evolutionary equation. For this, we impose the standing assumption that there exists $c>0$ such that for all $0\leq r\leq 1$, we have
\begin{equation*}
   \Re (a_1-a_2\mathcal{R}_r a_3 )\geq c
\end{equation*}
in the sense of positive definiteness in $\Lb(\Lp{2}(\Omega)^n)$. This assumption is slightly weaker than the one imposed in \cite{KPSZ20}. By \Cref{le:realpart-bounded-from-below-implies-invertible}, it implies that $A_{r}$ is boundedly invertible with $\norm{A_{r}^{-1}}\leq 1/c$.
Then, we may equivalently consider
\begin{equation*}
  \left[ \partial_t \begin{pmatrix}1 & 0 \\ 0 & 0\end{pmatrix}
    + \begin{pmatrix}0 & 0 \\ 0 & A_r^{-1}\end{pmatrix}
    + \begin{pmatrix} 0& \divn \\ \grad & 0\end{pmatrix}
    \right]
    \begin{pmatrix} c_r \\ q_r \end{pmatrix}
    = \begin{pmatrix} f \\ 0 \end{pmatrix}
\end{equation*}
where $\divn\coloneqq \cl{\div|_{\Cc(\Omega)^n}}$ is the closure of $\div$ as an operator in $\Lp{2}$ on smooth compactly supported vector fields. This models homogeneous Neumann boundary conditions.

Note that
\begin{equation*}
  M_r\colon z\mapsto \begin{pmatrix}1 & 0 \\ 0 & 0\end{pmatrix} +  z^{-1}
  \begin{pmatrix}0 & 0 \\ 0 & A_r^{-1}\end{pmatrix}
\end{equation*}
defines material laws for $0\leq r\leq 1$ with $\absbd (M_{r})=0$ and the following properties:
\begin{equation*}
  \Re zM_r(z)\geq \min\{\Re z,\Re A_{r}^{-1}\}
\end{equation*}
and 
\begin{equation*}
   \norm{M_{r}(z)}\leq 1+\norm{z^{-1}A_{r}^{-1}}\leq \frac{1}{c\abs{z}}
\end{equation*}
for $\abs{z}>0$. We obtain
\begin{align*}
   \Re A_r^{-1}&= \Re (a_1-a_2\mathcal{R}_r a_3 )^{-1} \\
   &\geq c \norm{(a_1-a_2\mathcal{R}_r a_3 )}^{-2} \geq c(\beta+\beta^2\sup_{0\leq r\leq 1}\norm{\mathcal{R}_r})^{-2}
\end{align*}
by
\Cref{le:realpart-bounded-from-below-implies-invertible}.

Since $(\mathcal{R}_r)_r$ is an approximation of unity, it follows from \Cref{le:sot-inverse-sequence-conv-crit} that $A^{-1}_r\to A^{-1}_0$ in the strong operator topology as $r \to 0$.
\begin{theorem}\label{thm:applcellmigration}
For all $\nu>0$, we have
\begin{multline*}
  \cl{\left[ \partial_t
      \begin{pmatrix}1 & 0 \\ 0 & 0\end{pmatrix}
      + \begin{pmatrix}0 & 0 \\ 0 & A_r^{-1}\end{pmatrix}
      + \begin{pmatrix} 0& \divn \\ \grad & 0\end{pmatrix}
    \right]}^{-1} \\
  \to
  \cl{\left[
      \partial_t \begin{pmatrix}1 & 0 \\ 0 & 0\end{pmatrix}
      + \begin{pmatrix}0 & 0 \\ 0 & A_0^{-1}\end{pmatrix}
      + \begin{pmatrix} 0 & \divn \\ \grad & 0\end{pmatrix}
    \right]}^{-1}
\end{multline*}
as $r\to 0$ in the weak operator topology of $\Lb\big(\Lp[\nu]{2}(\R;\Lp{2}(\Omega)^{n+1})\big)$.
\end{theorem}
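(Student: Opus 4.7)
The plan is to verify the hypotheses of \Cref{th:second-main-theorem} along arbitrary sequences $(r_{n})_{n}$ in $(0,1]$ with $r_{n}\to 0$; since the parameter space $[0,1]$ is first countable, handling every such sequence yields the continuous-parameter limit that is claimed. The skew-selfadjoint operator $A=\bigl(\begin{smallmatrix}0 & \divn \\ \grad & 0\end{smallmatrix}\bigr)$ on $\mathcal{H}\coloneqq \Lp{2}(\Omega)\oplus\Lp{2}(\Omega)^{n}$ satisfies the standing hypothesis of \Cref{sec:AEEs}: the compact embedding of $\dom(A)\cap(\ker A)^{\perp}$ into $\mathcal{H}$ follows from Rellich's theorem together with the FA-toolbox closed-range argument already cited in \Cref{ex:Htop}.

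Fix $\nu>0$ and pick any $\nu_{0}\in(0,\nu)$. The inequalities displayed just before the theorem yield, uniformly in $r\in[0,1]$ and $z\in\C_{\Re>\nu_{0}}$,
\begin{equation*}
  \Re zM_{r}(z)\geq\min\{\nu_{0},\tilde{c}\}
  \quad\text{and}\quad
  \norm{M_{r}(z)}\leq 1+\frac{1}{c\nu_{0}},
\end{equation*}
where $\tilde{c}\coloneqq c(\beta+\beta^{2}\sup_{0\leq s\leq 1}\norm{\mathcal{R}_{s}})^{-2}$ is the uniform lower bound on $\Re A_{r}^{-1}$ established in the text. In particular, $\absbd(M_{r})\leq\nu_{0}$ for every $r\in[0,1]$ and the candidate limit $M_{0}$ satisfies the same bounds. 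Only the lower-right block of $M_{r}(z)$ depends on $r$, so the strong operator convergence $A_{r}^{-1}\to A_{0}^{-1}$ recorded in the paragraph preceding the theorem (obtained from the approximation-of-unity property via \Cref{le:sot-inverse-sequence-conv-crit}) propagates to $M_{r_{n}}(z)\to M_{0}(z)$ in the strong operator topology for every fixed $z\in\C_{\Re>\nu_{0}}$. By \Cref{cor:sotnlH}, this upgrades to pointwise \hyperref[def:nHc]{$\tau(\ker A,\ran A)$}-convergence, which is precisely the input required.

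All hypotheses of \Cref{th:second-main-theorem} are then in place on $\C_{\Re>\nu_{0}}$, yielding the desired weak operator convergence in $\Lb(\Lp[\nu]{2}(\R;\Lp{2}(\Omega)^{n+1}))$ for every $\nu>\nu_{0}$. Since $\nu_{0}\in(0,\nu)$ was arbitrary, the conclusion holds for every $\nu>0$. The only step that is not a direct invocation of previously established results is the compact-embedding verification for the specific first-order block operator $A$; everything else is essentially book-keeping that reassembles \Cref{cor:sotnlH} with \Cref{th:second-main-theorem}.
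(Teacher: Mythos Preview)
Your proof is correct and follows essentially the same approach as the paper: Rellich--Kondrachov for the compact embedding of $\dom(A)\cap(\ker A)^{\perp}$, \Cref{cor:sotnlH} to pass from pointwise strong operator convergence to pointwise nonlocal $\mathrm{H}$-convergence, and then \Cref{th:second-main-theorem} for the conclusion. The paper's proof is terser and additionally cites \Cref{le:components-holomorphic,le:realpart-bounded-from-below-implies-invertible,le:dohnal} (these supply the $\mathcal{M}(\alpha)$ membership that \Cref{cor:sotnlH} formally requires), while you spell out the choice of $\nu_{0}\in(0,\nu)$ and the reduction to sequences more explicitly.
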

\begin{proof}
  Considering the Rellich--Kondrachov theorem and the above discussion,
  this immediately follows from \Cref{le:components-holomorphic,le:realpart-bounded-from-below-implies-invertible,le:dohnal}, \Cref{cor:sotnlH}
  and \Cref{th:second-main-theorem}.
\end{proof}
\begin{remark}
  In \cite[Theorem 5.1.3]{W16h}, one can show -- even in the non-autonomous case -- that the solution operators even converge in the strong operator topology. The example is merely presented to have a nonlocal example at hand.

  In the case $n=3$, note that the convergence assumptions of the above theorem can be weakened. We particularly refer to the example in \cite{Wa18} showing that if, additionally, $a_1$ is replaced by an $\mathrm{H}$-converging sequence $(a_{1,k})$ with limit $a_1$, the resulting sequence
 \begin{equation*}
   (a_{1,k}-a_2\mathcal{R}_{1/k} a_3 )_k^{-1}
 \end{equation*}converges to $   (a_{1}-a_2\mathcal{R}_{0} a_3 )^{-1}$ in $\hyperref[def:nHc]{\tau(\rg,\rrn)}$.
\end{remark}

\subsection{A homogenisation problem for scalar piezo-electricity}\label{subsec:hompie}

In this section, we consider a classical homogenisation problem in order to showcase the applicability for rapidly oscillating albeit local coefficients. Again, we refer to the example in \cite{Wa18} for more sophisticated situations. Here, we follow the model description of piezo-electro-magnetism from \cite{P17}. Note, that we treat homogeneous Dirichlet boundary conditions throughout and for ease of readability we simplify the case to scalar elastic waves. The rationale for 3-dimensional elastic waves can be dealt with similarly. Let $\Omega\subseteq \R^3$ be a bounded, weak Lipschitz domain with continuous boundary. Additionally, assume that $\Omega$ is topologically trivial (recall \Cref{ex:Htop} for the Helmholtz decomposition). We adopt the notation rolled out in \cite{P17} and consider the evolutionary equation $(\partial_t M_0+M_1+A)U=F$ with the following setting
\begin{align*}
  M_0 &\coloneqq
        \begin{pmatrix}
          1 & 0 & 0 & 0 \\
          0 & C^{-1} & C^{-1} e & 0 \\
          0 & e^* C^{-1} & \varepsilon + e^* C^{-1}e & 0 \\
          0 & 0 & 0 & \mu
        \end{pmatrix},
  &
    M_1 &\coloneqq
          \begin{pmatrix}
            0 & 0 & 0 & 0 \\
            0 & 0 & 0 & 0 \\
            0 & 0 & \sigma & 0 \\
            0 & 0 & 0 & 0
          \end{pmatrix},
  \\
  A &\coloneqq
      \begin{pmatrix}
        0 & -\div & 0 & 0 \\
        -\gradn  & 0 & 0 & 0 \\
        0 & 0 & 0 & -\rot \\
        0 & 0 & \rotn & 0
      \end{pmatrix},
\end{align*}
where $C$, $e$, $\mu$, $\varepsilon$, $\sigma$ are operators in $\Lb(\Lp{2}(\Omega)^3)$, of which $C$, $\mu$ and $\varepsilon$ are self-adjoint and non-negative, and $\rotn\coloneqq \cl{\rot|_{\Cc(\Omega)^3}}$ is defined similarly to $\divn$ before. Well-posedness in $\Lp[\nu]{2}(\R;\Lp{2}(\Omega)^{10})$ can be guarenteed by~\cite{P17}, if for some $c,d>0$ and $\nu_{0}\geq 0$ we have
\begin{equation*}
  C \geq 1/d,\quad \mu \geq c \quad\text{and}\quad \nu\varepsilon +\Re \sigma\geq c
\end{equation*}
for all $\nu>\nu_{0}$. Additionally asking for
\begin{equation*}
  C^{-1} \geq c,\quad \mu^{-1} \geq 1/d \quad\text{and}\quad \Re\big((\varepsilon + \sigma/z)^{-1}\big)\geq 1/d
\end{equation*}
for $\Re z>\nu_{0}$, we analogously obtain~\eqref{eq:alternative-boundedness-sec-main-th}.
In order to address the homogenisation problem, we consider bounded sequences $(C_n)_n$, $(e_n)_n$, $(\mu_n)_n$, $(\varepsilon_n)_n$, $(\sigma_n)_n$
where we assume the same self-adjointness, non-negativity and positive-definiteness conditions as before for $ C, \varepsilon, \mu,  \sigma$. The positive-definiteness constants $c,d$ and $\nu_{0}$ are supposed to be independent of $n$.

The operator $A$ induces the following decomposition (see \Cref{ex:Htop}) of the space $\mathcal{H} = \Lp{2}(\Omega)^{10}$:
\begin{equation*}
  \mathcal{H} = \Lp{2}(\Omega)^{10}
  = (\underbrace{\set{0} \oplus \rr \oplus \rgn \oplus \rg}_{=\mathrlap{\ker A = \mathcal{H}_{0}}})
  \oplus (\underbrace{\Lp{2}(\Omega) \oplus \rgn \oplus \rr \oplus \rrn}_{=\mathrlap{\ran A = \mathcal{H}_{1}}}).
\end{equation*}
Note that the assumption that $\Omega$ is topologically trivial guarentees that $\mathcal{H}_{D}(\Omega) = \set{0} = \mathcal{H}_{N}(\Omega)$ and therefore these spaces do not appear in the Helmholtz decomposition.

The application of \Cref{th:second-main-theorem} now reads as follows:

\begin{theorem}\label{thm:piezo-electr-hom}
  Assume that for $\nu>\nu_{0}$ and for all $z\in \C_{\Re>\nu}$
  \begin{equation*}
    \begin{pmatrix}C_n^{-1} & C_n^{-1} e_n \\
      e_n^{\ast} C_n^{-1} & \varepsilon_{n} + e_n^* C_n^{-1}e_n +z^{-1}\sigma_n
    \end{pmatrix}
  \end{equation*}
  converges to some $Z(z^{-1})$ in $\hyperref[def:nHc]{\tau(\rr \oplus \rgn, \rgn\oplus \rr)}$ and $\mu_n\to \mu$ in $\hyperref[def:nHc]{\tau(\rgn,\rr)}$ as $n\to\infty$. Then
  \begin{equation*}
    M\colon z\mapsto
    \begin{pNiceArray}{cw{c}{0.5cm}cc}
      1 &  0 & 0 & 0\\
      0 & \Block[c]{2-2}{Z(z^{-1})} & & 0 \\
      0 &  &  & 0 \\
      0 & 0 & 0 &\mu
    \end{pNiceArray}
    \in \Lb\big(\Lp{2}(\Omega)\oplus \Lp{2}(\Omega)^6\oplus \Lp{2}(\Omega)^3\big)
  \end{equation*}
  is a material law that satisfies $\Re zM(z)\geq c'$ for all $z\in \C_{\Re>\nu}$ and a suitable $c'>0$. Moreover,
  \begin{multline*}
    \cl{\left[\partial_t
        \begin{pmatrix}
          1 & 0 & 0 & 0 \\
          0 & C_{n}^{-1} & C_{n}^{-1} e_{n} & 0 \\
          0 & e_{n}^* C_{n}^{-1} & \varepsilon_{n} + e_{n}^* C_{n}^{-1}e_{n} & 0 \\
          0 & 0 & 0 & \mu_{n}
        \end{pmatrix}
        +
        \begin{pmatrix}
          0 & 0 & 0 & 0 \\
          0 & 0 & 0 & 0 \\
          0 & 0 & \sigma_{n} & 0 \\
          0 & 0 & 0 & 0
        \end{pmatrix}
        + A\right]}^{-1} \\
    \to \cl{[\partial_t M(\partial_t) + A]}^{-1}
  \end{multline*}
  in the weak operator topology in $\Lb(\Lp[\nu]{2}(\R;\Lp{2}(\Omega)^{10})$.
\end{theorem}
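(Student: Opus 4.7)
The plan is to apply \Cref{th:second-main-theorem} — with the uniform-boundedness condition replaced by its refinement from \Cref{re:second-main-theorem} — to the sequence of material laws
\[
  M_n(z) \coloneqq
  \begin{pmatrix}
    1 & 0 & 0 & 0 \\
    0 & C_n^{-1} & C_n^{-1}e_n & 0 \\
    0 & e_n^{\ast} C_n^{-1} & \varepsilon_n + e_n^{\ast} C_n^{-1}e_n + z^{-1}\sigma_n & 0 \\
    0 & 0 & 0 & \mu_n
  \end{pmatrix},
\]
and the candidate limit $M(z)$ with the middle $2\times 2$ block replaced by $Z(z^{-1})$ and $\mu_n$ by $\mu$. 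From the standing coercivity and boundedness hypotheses on the coefficients one checks block-by-block on $\C_{\Re>\nu_{0}}$ that $\norm{M_{n}(z)}\leq d$ and $\Re zM_{n}(z)\geq c'$ uniformly in $n$, hence $\absbd(M_{n})\leq \nu_{0}$. The compact-embedding assumption $\dom A\cap (\ker A)^{\perp}\cpt \Lp{2}(\Omega)^{10}$ needed for the machinery in \Cref{sec:AEEs} is a row-by-row consequence of Rellich--Kondrachov together with the Picard--Weber--Weck selection theorem recalled in \Cref{ex:Htop}.

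The decisive step is pointwise convergence $M_n(z)\to M(z)$ in \hyperref[def:nHc]{$\tau(\ker A,\ran A)$} at every $z\in\C_{\Re>\nu_0}$. The key structural observation is that $M_n(z)$ is block-diagonal in the three-group splitting
\[
  \Lp{2}(\Omega)^{10} = \Lp{2}(\Omega) \oplus \Lp{2}(\Omega)^{6} \oplus \Lp{2}(\Omega)^{3},
\]
and this splitting is orthogonal and refines $\ker A\oplus\ran A$: within the scalar slot one has $\set{0}\oplus \Lp{2}(\Omega)$; within the elastic-electric group the parts of $\ker A$ and $\ran A$ are $\rr\oplus\rgn$ and $\rgn\oplus\rr$; and within the magnetic slot the parts are $\rg$ and $\rrn$. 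Consequently, the Schur projections $\Lambda_{00},\Lambda_{01},\Lambda_{10},\Lambda_{11}$ from \eqref{eq:schur-projections} applied to $M_n(z)$ split as direct sums of the analogous Schur projections within each group taken in its own local decomposition, so that the parameterised nonlocal $\mathrm{H}$-topology factorises over the three groups.

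With this factorisation at hand, the three group-wise convergences are immediate: the scalar block is the constant identity and poses no issue; the elastic-electric $6\times 6$ block converges in \hyperref[def:nHc]{$\tau(\rr\oplus\rgn,\rgn\oplus\rr)$} by hypothesis; and $\mu_n\to\mu$ in \hyperref[def:nHc]{$\tau(\rg,\rrn)$} follows from the assumed $\tau(\rgn,\rr)$-convergence via the equivalence of these two topologies in the topologically trivial case (\Cref{thm:mrCVPDE}). Then \Cref{th:second-main-theorem} (in the form provided by \Cref{re:second-main-theorem}) delivers the claimed weak-operator-topology convergence of the resolvents.

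The main obstacle is to justify the block-factorisation of the Schur projections in the preceding paragraph rigorously; this amounts to showing that the orthogonal projector onto $\ker A$ commutes with the three group-projectors, which in turn holds because each of the subspaces $\rr,\rgn,\rg,\rrn$ is contained in its corresponding group. Once this bookkeeping is secured, the theorem follows by a direct application of the general theory assembled in \Cref{sec:holwop,sec:AEEs}.
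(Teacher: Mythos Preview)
Your proposal is correct and mirrors the paper's (very terse) proof: verify the compact-embedding hypothesis for $A$ via Rellich--Kondrachov and the curl selection theorems, and then apply \Cref{th:second-main-theorem} together with \Cref{re:second-main-theorem}; your explicit block-diagonal factorisation of the Schur projections simply unpacks what the paper leaves to the reader. One minor caveat: your appeal to \Cref{thm:mrCVPDE} to convert the assumed $\tau(\rgn,\rr)$-convergence of $\mu_n$ into the $\tau(\rg,\rrn)$-convergence actually needed in the magnetic slot presupposes that the $\mu_n$ are local multiplication operators in $M(\alpha,\beta;\Omega)$---consistent with the section's declared focus on local coefficients, but not spelled out in the theorem's hypotheses.
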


\begin{proof}
  Considering the Rellich--Kondrachov theorem, the compact embeddings of $\dom(\rot)\cap\rrn$ and $\dom(\rotn)\cap\rr$ as Hilbert spaces into $\Lp{2}(\Omega)^{3}$ and the above discussion, the claim follows from \Cref{th:second-main-theorem} in
  combination with \Cref{re:second-main-theorem}.
\end{proof}

\begin{remark}
  It is desirable to obtain a more explicit formula for the limit expression in \Cref{thm:piezo-electr-hom} if more structural assumptions on the coefficients and the couplings are at hand. In fact, in a slightly different situation this is done in \cite{F83}. Thus, at least for periodic, highly oscillatory coefficents one can anticipate the existence of a limit. The particular computation of which, however, will be left to future research.
\end{remark}

\section{Conclusion}\label{sec:con}

In this paper, we have defined a topology on holomorphic, operator-valued functions
(\Cref{def:main-topology}) and provided a compactness result (\Cref{th:main-theorem}). Moreover, we have identified a continuity statement related to the resolvent of a skew-selfadjoint operator with compact resolvent outside its kernel that, together with the introduced topology, yields a convergence result (\Cref{th:second-main-theorem}) that has applications to (abstract, nonlocal) homogenisation problems for evolutionary equations and can be easily applied to a class of nonlocal equations as well as to homogenisation problems for systems of time-dependent partial differential equations.

\appendix

\section{}\label{app:app}

\begin{remark}\label{re:compact-convergence-topology-properties}
  Whenever we consider the holomorphic functions $\Hol(U,\C)$ for an open $U\subseteq\C$, we endow this space with the topology of compact convergence. That means, a sequence
  in $\Hol(U,\C)$
  converges if and only if it converges uniformly on every compact subset of $U$. One can (cf.~\cite{Arens46} and~\cite[Thm.~3.4.16]{Engelking89}) explicitly construct a complete and separable metric that induces this topology,
  i.e., $\Hol(U,\C)$ is Polish. Note that (obviously) both addition and scalar multiplication are continuous w.r.t.\ this topology, i.e., $\Hol(U,\C)$ is a topological vector space.
\end{remark}

\begin{lemma}\label{le:components-holomorphic}
  Let $U \subseteq \C$ be an open set and let $\mathcal{H}$ be a Hilbert space that can be orthogonally decomposed into $\mathcal{H} = \mathcal{H}_{0} \oplus \mathcal{H}_{1}$.
  Then, the block operator
  \begin{equation*}
    M=\begin{pmatrix}
      M_{00} & M_{01} \\
      M_{10} & M_{11}
    \end{pmatrix}\colon U \to \mathcal{H}^{\mathcal{H}}
  \end{equation*}
maps to $\Lb(\mathcal{H})$ if and only if
each block entry $M_{ij}\colon U\to \mathcal{H}_{i}^{\mathcal{H}_{j}}$ maps
to $\Lb(\mathcal{H}_{j},\mathcal{H}_{i})$. In that case, $M\colon U \to\Lb(\mathcal{H})$ is
holomorphic if and only if each block entry $M_{ij}\colon U\to \Lb(\mathcal{H}_{j},\mathcal{H}_{i})$ is holomorphic.
\end{lemma}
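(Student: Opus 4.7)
The plan is to reduce both assertions to elementary manipulations involving the canonical projections and inclusions associated with the orthogonal decomposition $\mathcal{H}=\mathcal{H}_0\oplus\mathcal{H}_1$. Denote by $P_i\in\Lb(\mathcal{H},\mathcal{H}_i)$ the orthogonal projection and by $\iota_j\in\Lb(\mathcal{H}_j,\mathcal{H})$ the canonical inclusion; both have norm at most one. The key pointwise identity is
\begin{equation*}
   M_{ij}(z)=P_i\,M(z)\,\iota_j\qquad\text{and}\qquad M(z)=\sum_{i,j\in\{0,1\}}\iota_i\,M_{ij}(z)\,P_j,
\end{equation*}
valid for every $z\in U$.

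For the first equivalence (pointwise boundedness), if $M(z)\in\Lb(\mathcal{H})$, then each composition $P_iM(z)\iota_j$ is bounded and hence $M_{ij}(z)\in\Lb(\mathcal{H}_j,\mathcal{H}_i)$ with $\norm{M_{ij}(z)}\leq\norm{M(z)}$. Conversely, if all four block entries are bounded, then for $x=x_0+x_1\in\mathcal{H}_0\oplus\mathcal{H}_1$ the Pythagorean theorem yields
\begin{equation*}
  \norm{M(z)x}_{\mathcal{H}}^2=\sum_{i\in\{0,1\}}\norm{M_{i0}(z)x_0+M_{i1}(z)x_1}_{\mathcal{H}_i}^2\leq C(z)\bigl(\norm{x_0}^2+\norm{x_1}^2\bigr),
\end{equation*}
where $C(z)$ depends only on the four block norms, so $M(z)\in\Lb(\mathcal{H})$.

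For the second equivalence (holomorphy), I would use the elementary fact that pre- and post-composition with fixed bounded linear operators preserves operator-norm holomorphy: if $f\colon U\to\Lb(\mathcal{X},\mathcal{Y})$ is holomorphic and $B\in\Lb(\tilde{\mathcal{X}},\mathcal{X})$, $A\in\Lb(\mathcal{Y},\tilde{\mathcal{Y}})$, then
\begin{equation*}
  \frac{A\,f(z)\,B-A\,f(w)\,B}{z-w}=A\,\frac{f(z)-f(w)}{z-w}\,B
\end{equation*}
converges in operator norm as $w\to z$ (to $A\,f'(z)\,B$), since composition with bounded operators is jointly continuous in norm. Applied to $A=P_i$, $B=\iota_j$, this shows that holomorphy of $M$ implies holomorphy of each $M_{ij}=P_iM\iota_j$. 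Conversely, if each $M_{ij}$ is holomorphic, then by the same principle each summand $\iota_iM_{ij}P_j$ is holomorphic, and since the sum of finitely many holomorphic operator-valued functions is holomorphic (the difference quotient of a sum is the sum of the difference quotients), $M$ is holomorphic.

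There is no real obstacle here; the statement is essentially a bookkeeping consequence of the orthogonal direct sum structure together with the fact that composing an operator-valued holomorphic function with fixed bounded linear maps preserves holomorphy. The only point that requires a brief justification is this preservation property, which follows immediately from the definition of the complex derivative as a norm limit of difference quotients.
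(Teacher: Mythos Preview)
Your proposal is correct and follows essentially the same approach as the paper. Both arguments rest on the Pythagorean identity to handle the boundedness equivalence; for the holomorphy equivalence the paper derives the two-sided norm inequalities $\max_{i,j}\norm{M_{ij}(z)}\leq\norm{M(z)}\leq\sum_{i,j}\norm{M_{ij}(z)}$ and appeals to them directly, while you phrase the same content as ``composition with the fixed bounded maps $P_i$, $\iota_j$ preserves holomorphy'' together with the finite-sum reconstruction $M=\sum\iota_iM_{ij}P_j$. These are two equivalent ways of packaging the same linear-isomorphism-of-Banach-spaces observation.
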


\begin{proof}
  For $v\in\mathcal{H}$ with $\norm{v}_{\mathcal{H}}\leq 1$ and its unique decomposition
  $v=v_{0}+v_{1}$, the Pythagorean theorem yields $\norm{v_{0}}_{\mathcal{H}},
  \norm{v_{1}}_{\mathcal{H}}\leq 1$. Once again applying the Pythagorean theorem, we obtain
  \begin{align*}
    \norm{M(z)v}^{2}_{\mathcal{H}}&=\norm{M_{00}(z)v_{0}+M_{01}(z)v_{1}}^{2}_{\mathcal{H}}
    +\norm{M_{10}(z)v_{0}+M_{11}(z)v_{1}}^{2}_{\mathcal{H}}\\
    &\leq (\norm{M_{00}(z)}+\norm{M_{01}(z)})^{2}+(\norm{M_{10}(z)}+\norm{M_{11}(z)})^{2}
  \end{align*}
  for $z\in U$. This shows
  \begin{equation}
    \norm{M(z)}\leq \norm{M_{00}(z)}+\norm{M_{01}(z)}+\norm{M_{10}(z)}+\norm{M_{11}(z)}\text{.}
    \label{eq:full-vs-block-matrix-norm}
  \end{equation}
  Conversely, assume $v_{0}\in\mathcal{H}_{0}$ with $\norm{v_{0}}\leq 1$. Then, the Pythagorean
  theorem yields
  \begin{equation*}
    \norm{M_{00}(z)v_{0}}^{2}_{\mathcal{H}}\leq \norm{M_{00}(z)v_{0}}^{2}_{\mathcal{H}}+\norm{M_{10}(z)v_{0}}^{2}_{\mathcal{H}}=
    \norm{M(z)v_{0}}^{2}_{\mathcal{H}}\leq \norm{M(z)}^{2}
  \end{equation*}
  for $z\in U$. After similar calculations for the other block entries, we get
  \begin{equation}\label{eq:block-vs-full-matrix-norm}
    \max\big(\norm{M_{00}(z)},\norm{M_{01}(z)},\norm{M_{10}(z)},\norm{M_{11}(z)}\big) \leq \norm{M(z)}\text{.}
  \end{equation}
  Inequalities~\eqref{eq:full-vs-block-matrix-norm} and~\eqref{eq:block-vs-full-matrix-norm} immediately prove the claimed statements.
\end{proof}

\begin{lemma}\label{le:product-holomorphic}
  Let $U \subseteq \C$ be an open set and let $\mathcal{H}$ be a Hilbert space. If $M, N \colon U \to \Lb(\mathcal{H})$ are holomorphic, then also the product $MN$ is holomorphic with derivative $MN'+M'N$.
\end{lemma}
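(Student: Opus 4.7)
The plan is to mimic the scalar-valued product rule, exploiting that operator composition on $\Lb(\mathcal{H})$ is continuous with respect to the operator norm via the submultiplicative inequality $\norm{ST}\leq\norm{S}\norm{T}$. Since $M$ and $N$ are norm-holomorphic on $U$, the function $MN\colon U\to\Lb(\mathcal{H})$ is well-defined, and I only need to verify the derivative formula pointwise.

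Fix $z\in U$ and, for $w\in U\setminus\set{z}$ close to $z$, I would write the standard telescoping decomposition
\begin{equation*}
  \frac{M(w)N(w)-M(z)N(z)}{w-z}
  = M(w)\,\frac{N(w)-N(z)}{w-z} + \frac{M(w)-M(z)}{w-z}\,N(z).
\end{equation*}
Subtracting the candidate derivative $M(z)N'(z)+M'(z)N(z)$ from both sides and regrouping yields
\begin{align*}
  \MoveEqLeft \frac{M(w)N(w)-M(z)N(z)}{w-z} - \bigl(M(z)N'(z)+M'(z)N(z)\bigr) \\
  &= M(w)\Bigl(\frac{N(w)-N(z)}{w-z} - N'(z)\Bigr) + \bigl(M(w)-M(z)\bigr)N'(z) \\
  &\phantom{=\ }{} + \Bigl(\frac{M(w)-M(z)}{w-z} - M'(z)\Bigr)N(z).
\end{align*}

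Now I would estimate each term in operator norm. Holomorphy of $M$ implies that $M$ is norm-continuous at $z$ and, in particular, locally bounded; hence there exists a neighbourhood of $z$ on which $\sup_{w}\norm{M(w)}<\infty$. The first term is then bounded by $\sup_{w}\norm{M(w)}\cdot\bigl\|\frac{N(w)-N(z)}{w-z} - N'(z)\bigr\|$, which vanishes as $w\to z$ by holomorphy of $N$. The second term is bounded by $\norm{M(w)-M(z)}\cdot\norm{N'(z)}$, which vanishes by continuity of $M$ at $z$. The third term is bounded by $\bigl\|\frac{M(w)-M(z)}{w-z} - M'(z)\bigr\|\cdot\norm{N(z)}$, which vanishes by holomorphy of $M$. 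Therefore the difference quotient converges in operator norm to $M(z)N'(z)+M'(z)N(z)$, and since this holds at every $z\in U$, the product $MN$ is holomorphic on $U$ with the claimed derivative.

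There is essentially no substantive obstacle: the only point to be a touch careful about is that the bound on the first term requires local uniform boundedness of $\norm{M(w)}$ near $z$, which is immediate from norm-continuity. Everything else is the scalar product-rule argument transported verbatim to the operator-norm setting, using submultiplicativity wherever the scalar proof uses $|ab|=|a||b|$.
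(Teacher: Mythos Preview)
Your proof is correct and follows essentially the same approach as the paper: both use the standard telescoping decomposition of the difference quotient and appeal to norm-continuity of operator multiplication. The only cosmetic difference is which factor is held at $z$ and which at $w$ in the telescoping (you factor out $M(w)$ and $N(z)$, the paper factors out $M(z)$ and $N(w)$), and you spell out the three-term error estimate explicitly rather than invoking continuity of multiplication abstractly.
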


\begin{proof}
  Note that the multiplication in $\Lb(\mathcal{H})$ is a continuous operation. Hence,
  \begin{align*}
    \MoveEqLeft[7]
    \lim_{w \to z} \frac{M(z)N(z) - M(w)N(w)}{z-w} \\
    &=\lim_{w \to z} \frac{M(z)N(z) - M(z)N(w) + M(z)N(w) - M(w)N(w)}{z-w} \\
    &=\lim_{w \to z} \frac{M(z)N(z) - M(z)N(w)}{z-w} + \lim_{w \to z}\frac{M(z)N(w) - M(w)N(w)}{z-w} \\
    &= M(z) \lim_{w \to z} \frac{N(z) - N(w)}{z-w} + \lim_{w \to z}\frac{M(z) - M(w)}{z-w} N(w) \\
    &= M(z) N'(z) + \lim_{w \to z}\frac{M(z) - M(w)}{z-w} \lim_{w \to z}N(w) \\
    &= M(z) N'(z) + M'(z)N(z),
  \end{align*}
  which implies that $MN$ is complex differentiable.
\end{proof}

\begin{lemma}\label{le:inverse-holomorphic}
  Let $U \subseteq \C$ be an open set and let $\mathcal{H}$ be a Hilbert space. If a holomorphic function $M\colon U \to \Lb(\mathcal{H})$ is such that $M(z)$ is invertible for every $z \in U$, then $M(\cdot)^{-1}\colon U \to \Lb(\mathcal{H})$ is also holomorphic with derivative $-M(\cdot)^{-1} M' M(\cdot)^{-1}$.
\end{lemma}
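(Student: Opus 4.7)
The plan is to mimic the quotient-rule style argument already used in Lemmas~\ref{th:G-from-operator-to-sesqui-is-bijective} and~\ref{le:product-holomorphic} in the excerpt, but replacing the algebraic identity $(ab)' = a'b + ab'$ with the classical resolvent-style identity
\begin{equation*}
  M(z)^{-1} - M(w)^{-1} = M(z)^{-1}\bigl(M(w) - M(z)\bigr) M(w)^{-1}.
\end{equation*}
This identity is a direct computation (multiply out on the right using $M(w)^{-1}M(w) = I = M(z)^{-1}M(z)$) and is valid at every point where both inverses exist.

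Before I can pass to the limit, I need to know two things near a fixed $z \in U$: first, that $M(w)^{-1}$ exists for all $w$ in a neighbourhood of $z$ (which is hypothesised); and second, that $M(w)^{-1} \to M(z)^{-1}$ in operator norm as $w \to z$. Since $M$ is holomorphic and thus continuous in the operator norm, for $w$ close enough to $z$ we have $\lVert M(z)^{-1}(M(w)-M(z))\rVert < 1$, so the Neumann series gives
\begin{equation*}
  M(w)^{-1} = \bigl(I + M(z)^{-1}(M(w)-M(z))\bigr)^{-1} M(z)^{-1},
\end{equation*}
and this expression converges to $M(z)^{-1}$ as $w \to z$; in particular $\lVert M(w)^{-1}\rVert$ is locally bounded. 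This is the only quasi-technical point and is the closest thing to an obstacle, but it is entirely standard.

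Combining these two ingredients, I divide the displayed identity by $z-w$ and use joint continuity of multiplication in $\Lb(\mathcal{H})$ to compute
\begin{align*}
  \lim_{w\to z}\frac{M(z)^{-1} - M(w)^{-1}}{z-w}
  &= \lim_{w\to z} M(z)^{-1}\cdot \frac{M(w)-M(z)}{z-w}\cdot M(w)^{-1} \\
  &= -M(z)^{-1}\, M'(z)\, M(z)^{-1},
\end{align*}
where I used holomorphy of $M$ to identify the middle limit with $-M'(z)$ and the preceding step to identify the right factor with $M(z)^{-1}$. This shows that $M(\cdot)^{-1}$ is complex differentiable at every $z \in U$, with the stated derivative, and hence is holomorphic on $U$. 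No further arguments are required.
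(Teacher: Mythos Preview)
Your proof is correct and follows essentially the same approach as the paper: both use the resolvent identity $M(z)^{-1}-M(w)^{-1}=M(z)^{-1}(M(w)-M(z))M(w)^{-1}$, divide by $z-w$, and pass to the limit using continuity of multiplication and of inversion. The only cosmetic difference is that the paper obtains continuity of $w\mapsto M(w)^{-1}$ by citing \cite[Thm.~10.12]{Ru91}, whereas you give a self-contained Neumann-series argument.
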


\begin{proof}
  Note that $A \mapsto A^{-1}$ is continuous on $\dset{A \in \Lb(\mathcal{H})}{A \text{ is invertible}}$ by \cite[Thm.~10.12]{Ru91} and the multiplication in $\Lb(\mathcal{H})$ is a continuous operation. Hence,
  \begin{align*}
    \lim_{w \to z}\frac{M(z)^{-1} - M(w)^{-1}}{z-w}
    &= \lim_{w \to z} \frac{M(z)^{-1}(M(w) - M(z)) M(w)^{-1}}{z-w} \\
    &=  M(z)^{-1}\lim_{w \to z}\frac{-(M(z) - M(w))}{z-w} \lim_{w \to z} M(w)^{-1} \\
    &= - M(z)^{-1} M'(z) M(z)^{-1},
  \end{align*}
  which implies that $M(\cdot)^{-1}$ is complex differentiable.
\end{proof}

\begin{theorem}[Montel's theorem {\cite[Thm.~14.6]{Ru87}}]\label{th:montels-theorem}
  Let $U \subseteq \C$ be open and $S \subseteq \Hol(U,\C)$. Then $S$ is relatively compact (also called normal), if and only if, $S$ is locally uniformly bounded, i.e., for all $K \subseteq U$ compact there exists a $C_{K} > 0$ such that
  \begin{equation*}
    \sup_{z \in K, f \in S} \abs{f(z)} \leq C_{K}.
  \end{equation*}
\end{theorem}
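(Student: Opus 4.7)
I will prove the two implications separately; the sufficiency direction is the substantial one and will rest on Cauchy's integral formula together with an Arzel\`a--Ascoli argument.

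For necessity ($\Rightarrow$), observe that for each compact $K\subseteq U$ the seminorm $p_{K}(f)\coloneqq \sup_{z\in K}\abs{f(z)}$ is continuous on $\Hol(U,\C)$, because by \Cref{re:compact-convergence-topology-properties} the topology of compact convergence is precisely the one generated by these seminorms. If $S$ is relatively compact, then $\cl{S}$ is compact, so $p_{K}(\cl{S})$ is a compact (hence bounded) subset of $\R$; this yields the desired $C_{K}$.

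For sufficiency ($\Leftarrow$), assume $S$ is locally uniformly bounded. By the metrisability part of \Cref{re:compact-convergence-topology-properties}, it suffices to show that every sequence $(f_{n})_{n}$ in $S$ admits a subsequence converging in $\Hol(U,\C)$. The key is to upgrade the assumed boundedness to \emph{equicontinuity on compacta} via Cauchy's integral formula. Concretely, for $z_{0}\in U$ pick $r>0$ with $\cl{\ball_{2r}(z_{0})}\subseteq U$; for $z,w\in \ball_{r}(z_{0})$ and $f\in S$ the identity
\begin{equation*}
   f(z)-f(w)=\frac{z-w}{2\pi\iu}\oint_{\abs{\zeta-z_{0}}=2r}\frac{f(\zeta)}{(\zeta-z)(\zeta-w)}\dx[\zeta]
\end{equation*}
combined with $\abs{\zeta-z},\abs{\zeta-w}\geq r$ and the uniform bound of $S$ on $\set{\abs{\zeta-z_{0}}=2r}$ (supplied by the hypothesis) yields an estimate $\abs{f(z)-f(w)}\leq L_{z_{0}}\abs{z-w}$ with $L_{z_{0}}$ independent of $f\in S$. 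A finite-subcover argument then promotes this to equicontinuity on any prescribed compact $K\subseteq U$.

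With pointwise boundedness (a fortiori from local uniform boundedness) and local equicontinuity in hand, I would apply the Arzel\`a--Ascoli theorem on a compact exhaustion $(K_{m})_{m}$ of $U$ together with a diagonal subsequence to extract a subsequence $(f_{n_{k}})_{k}$ converging uniformly on every $K_{m}$, hence locally uniformly on $U$. Finally, by the Weierstrass convergence theorem, the locally uniform limit of holomorphic functions is again holomorphic, so the extracted subsequence converges in $\Hol(U,\C)$ and $S$ is thus relatively compact. The main obstacle is the equicontinuity step: while the Cauchy estimate itself is short, one must carefully arrange doubly-concentric disks so that the hypothesis (a bound on a slightly larger compact neighbourhood) is leveraged into a \emph{uniform} Lipschitz bound on a slightly smaller neighbourhood, independent of the chosen $f\in S$.
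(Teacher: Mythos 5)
Your proof is correct: the seminorm argument for necessity, and the Cauchy-integral estimate giving uniform local Lipschitz bounds (hence equicontinuity), followed by Arzel\`a--Ascoli on a compact exhaustion with a diagonal subsequence, the Weierstrass convergence theorem, and the reduction to sequences via metrisability of $\Hol(U,\C)$, all go through without gaps. The paper does not prove this statement at all -- it cites it as Montel's theorem from Rudin -- and your argument is essentially the standard proof given in that reference, so there is nothing to reconcile.
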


\begin{corollary}\label{th:Hol-U-ball-compact}
  $\Hol(U,\cl{\ball}_{r}(0))$ is compact, where $\cl{\ball}_{r}(0)$ is the closed ball with radius $r \geq 0$ in $\C$.
\end{corollary}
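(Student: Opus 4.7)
The plan is to apply Montel's theorem (\Cref{th:montels-theorem}) together with a closedness argument. First, I would view $\Hol(U,\cl{\ball}_r(0))$ as a subset of $\Hol(U,\C)$ equipped with the topology of compact convergence (as per \Cref{re:compact-convergence-topology-properties}). Every $f \in \Hol(U,\cl{\ball}_r(0))$ satisfies $|f(z)| \leq r$ for all $z \in U$, so for any compact $K \subseteq U$,
\begin{equation*}
  \sup_{z \in K,\, f \in \Hol(U,\cl{\ball}_r(0))} |f(z)| \leq r < \infty,
\end{equation*}
which means the set is (globally, hence locally) uniformly bounded. Montel's theorem therefore yields relative compactness of $\Hol(U,\cl{\ball}_r(0))$ in $\Hol(U,\C)$.

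It remains to argue that $\Hol(U,\cl{\ball}_r(0))$ is closed in $\Hol(U,\C)$. Let $(f_i)_{i\in I}$ be a net in $\Hol(U,\cl{\ball}_r(0))$ converging to some $f \in \Hol(U,\C)$; since the topology of compact convergence is metrisable (\Cref{re:compact-convergence-topology-properties}), it suffices to consider sequences. Uniform convergence on compact subsets in particular gives pointwise convergence, so $|f(z)| = \lim_n |f_n(z)| \leq r$ for every $z \in U$, hence $f(U) \subseteq \cl{\ball}_r(0)$ and $f \in \Hol(U,\cl{\ball}_r(0))$. (Note that holomorphicity of $f$ is already built into the ambient space $\Hol(U,\C)$, so no appeal to Weierstrass is strictly necessary here.)

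Combining relative compactness and closedness, $\Hol(U,\cl{\ball}_r(0))$ is compact. There is no genuine obstacle: the only conceptual point is to recognise that the ``pointwise bound by $r$'' condition simultaneously supplies the local uniform boundedness hypothesis needed for Montel's theorem and the closedness property needed to upgrade relative compactness to compactness.
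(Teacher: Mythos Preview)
Your proof is correct and follows essentially the same approach as the paper's own argument: Montel's theorem for relative compactness, then closedness via pointwise limits preserving the bound $|f(z)|\leq r$. Your version is in fact slightly more careful, explicitly invoking metrisability to justify working with sequences and noting that holomorphicity of the limit is inherited from the ambient space.
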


\begin{proof}
  By \Cref{th:montels-theorem} (Montel's theorem) we conclude that $\Hol(U,\cl{\ball}_{r}(0))$ is relatively compact in $\Hol(U,\C)$.
  We finish the proof by showing the closedness of $\Hol(U,\cl{\ball}_{r}(0))$:
  Let $(f_{n})_{n\in\N}$ be a sequence in $\Hol(U,\cl{\ball}_{r}(0))$ that converges to $f \in \Hol(U,\C)$, i.e., for all $K \subseteq U$ compact we have
  \begin{align*}
    \sup_{z \in K} \abs{f_{n}(z) - f(z)} \to 0.
  \end{align*}
  In particular $f_{n}(z)$ converges to $f(z)$ in $\C$. Since $\abs{f_{n}(z)} \leq 1$ and limits preserve inequalities, we conclude $\abs{f(z)} \leq 1$.
\end{proof}

The following theorem is a small adaption of \cite[Prop.~6.1]{Ba85}. We just regard $U \subseteq \C$ instead of the more general case $U \subseteq \mathcal{Y}$ for a normed vector space $\mathcal{Y}$.
\begin{theorem}\label{th:weak-is-strong-holomorphy}
  Let $\mathcal{X}$ be a Banach space and $U \subseteq \C$ open. If $\Psi \subseteq \mathcal{X}^{\prime}$ has the following property
  \begin{equation*}
    W \subseteq \mathcal{X}\ \text{is bounded}
    \quad\Leftrightarrow\quad
    \psi(W) \subseteq \C\ \text{is bounded}\ \forall \psi \in \Psi,
  \end{equation*}
  then the following statements are equivalent:
  \begin{enumerate}
    \item $f \in \Hol(U,\mathcal{X})$,
    \item $\psi \circ f \in \Hol(U,\C)$ for all $\psi \in \Psi$.
  \end{enumerate}
\end{theorem}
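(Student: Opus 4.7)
The implication (i) $\Rightarrow$ (ii) is immediate. If $f$ is strongly complex-differentiable at $z_0 \in U$ with derivative $f'(z_0)\in \mathcal{X}$, then for any $\psi \in \Psi \subseteq \mathcal{X}^{\prime}$, continuity and linearity of $\psi$ give $(\psi \circ f)'(z_0) = \psi(f'(z_0))$, so $\psi\circ f$ is holomorphic. The substance of the theorem is the converse direction, where the assumption on $\Psi$ must be deployed to pass from information visible through each $\psi \in \Psi$ to norm information in $\mathcal{X}$.

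For (ii) $\Rightarrow$ (i), I would fix $z_0\in U$ and $r>0$ with $\overline{\ball}_{r}(z_0)\subseteq U$, and let $\Gamma$ be the positively oriented circle of radius $r$ around $z_0$. Since each $\psi\circ f \in \Hol(U,\C)$ is continuous on the compact set $\Gamma$, $L_{\psi}\coloneqq \sup_{\zeta\in\Gamma}\abs{\psi(f(\zeta))}<\infty$ for every $\psi\in\Psi$. Cauchy's integral formula for the scalar holomorphic function $\psi\circ f$ then yields, for $z,w \in \ball_{r/2}(z_0)$ with $z\neq w$,
\begin{equation*}
  \psi\!\left(\frac{f(z) - f(w)}{z - w}\right) = \frac{1}{2\pi\iu}\oint_{\Gamma} \frac{\psi(f(\zeta))}{(\zeta - z)(\zeta - w)}\dx[\zeta].
\end{equation*}
Specialising to a sequence $(z_{n})_{n\in\N}$ in $\ball_{r/2}(z_0)\setminus\set{z_{0}}$ with $z_{n}\to z_{0}$ and $z_{n}\neq z_{m}$, and setting $a_{n}\coloneqq (f(z_{n})-f(z_{0}))/(z_{n}-z_{0}) \in \mathcal{X}$, an analogous partial-fraction manipulation gives
\begin{equation*}
  \psi(a_{n}-a_{m}) = \frac{z_{n}-z_{m}}{2\pi\iu}\oint_{\Gamma} \frac{\psi(f(\zeta))}{(\zeta-z_{0})(\zeta-z_{n})(\zeta-z_{m})}\dx[\zeta],
\end{equation*}
so the uniform bound $\abs{\psi((a_{n}-a_{m})/(z_{n}-z_{m}))} \leq \tfrac{8}{r^{2}}L_{\psi} =: K_{\psi}$ holds for all admissible $n,m$.

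The decisive step is the invocation of the hypothesis on $\Psi$: the set $\set{(a_{n}-a_{m})/(z_{n}-z_{m}) \mid n,m\in \N,\ z_{n}\neq z_{m}}\subseteq \mathcal{X}$ is $\Psi$-weakly bounded and is therefore norm-bounded by some constant $M$. Consequently $\norm{a_{n}-a_{m}}_{\mathcal{X}}\leq M\abs{z_{n}-z_{m}}\to 0$ as $n,m\to\infty$, so $(a_{n})_{n\in\N}$ is norm-Cauchy in the Banach space $\mathcal{X}$ and converges to some $L\in\mathcal{X}$. An interleaving argument (merging two sequences $(z_{n})$ and $(z_{n}')$ both tending to $z_{0}$ and applying the same Cauchy estimate) shows that the limit is independent of the chosen sequence, hence $L$ is the strong derivative of $f$ at $z_{0}$, and $f \in \Hol(U,\mathcal{X})$.

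The main obstacle, and the point at which the hypothesis on $\Psi$ is genuinely needed, is this upgrade from scalar-valued Cauchy-integral bounds to a uniform norm bound in $\mathcal{X}$. The critical trick is to divide the difference $a_{n}-a_{m}$ by the scalar $z_{n}-z_{m}$: after this normalisation the Cauchy-integral representation produces a bound $K_{\psi}$ that is uniform in $n,m$ (rather than merely a null sequence for each $\psi$), and the $\Psi$-hypothesis then transmutes this family of scalar uniform bounds into a single norm bound. Multiplying back by $\abs{z_{n}-z_{m}}$ supplies the Cauchy estimate, and completeness of $\mathcal{X}$ finishes the argument.
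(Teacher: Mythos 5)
The paper itself does not prove this statement: it is quoted as a small adaptation of \cite[Prop.~6.1]{Ba85}, so there is no internal proof to compare with. Your argument is a correct, self-contained proof along the classical lines for such ``weak implies strong holomorphy'' results: (i)$\Rightarrow$(ii) is immediate, and for the converse you scalarise with $\psi$, use Cauchy's integral formula to bound the second-order difference quotients uniformly in the points (with a constant depending on $\psi$), then invoke the boundedness-determining property of $\Psi$ to convert this family of scalar bounds into a single norm bound $M$, so that $\norm{a_{n}-a_{m}}_{\mathcal{X}}\leq M\abs{z_{n}-z_{m}}$ and completeness of $\mathcal{X}$ produces the strong derivative; this is exactly the standard mechanism, and the partial-fraction identities and the use of only the nontrivial implication of the hypothesis on $\Psi$ are all correct. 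Two minor polish points: the integral estimate actually yields the constant $4L_{\psi}/r^{2}$ (your $8L_{\psi}/r^{2}$ is still a valid upper bound, just not sharp); and since your uniform bound holds for \emph{all} pairs $z\neq w$ in $\ball_{r/2}(z_0)\setminus\set{z_0}$, not merely along the chosen sequence, you could conclude directly that $z\mapsto (f(z)-f(z_0))/(z-z_0)$ is Lipschitz on the punctured half-disc and therefore has a norm limit at $z_0$; this one-line observation replaces the sequence-plus-interleaving step, which as written quietly requires the merged sequence to have pairwise distinct terms and so needs a small extra remark to cover arbitrary approach to $z_0$.
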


\begin{lemma}[{\cite[Prop.~6.2.3]{SeTrWa22}}]\label{le:realpart-bounded-from-below-implies-invertible}
  Let $\mathcal{H}$ be a Hilbert space and $A \in \Lb(\mathcal{H})$ such that $\Re A \geq c > 0$. Then, $A^{-1} \in \Lb(\mathcal{H})$ with $\norm{A^{-1}} \leq \frac{1}{c}$
  and $\Re A^{-1}\geq c\norm{A}^{-2}$.
\end{lemma}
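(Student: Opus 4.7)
The plan is to first extract the usual coercivity estimate from $\Re A \geq c$, then use it together with a symmetry consideration on $A^{\ast}$ to obtain surjectivity, and finally derive the lower bound on $\Re A^{-1}$ by a direct substitution.

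First I would record that $\Re A \geq c$ means $\Re\scprod{A\varphi}{\varphi}_{\mathcal{H}} \geq c\norm{\varphi}^{2}$ for every $\varphi \in \mathcal{H}$. Combining with the Cauchy--Schwarz inequality gives
\begin{equation*}
  c\norm{\varphi}^{2} \leq \Re\scprod{A\varphi}{\varphi}_{\mathcal{H}} \leq \abs{\scprod{A\varphi}{\varphi}_{\mathcal{H}}} \leq \norm{A\varphi}\,\norm{\varphi},
\end{equation*}
hence $\norm{A\varphi} \geq c\norm{\varphi}$. This immediately yields injectivity of $A$ and closedness of $\ran A$.

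Next, using $(A^{\ast})^{\ast} = A$ one has $\Re A^{\ast} = (A^{\ast}+A)/2 = \Re A \geq c$, so the very same estimate applies to $A^{\ast}$, giving $\norm{A^{\ast}\psi} \geq c\norm{\psi}$ and in particular $\ker A^{\ast} = \set{0}$. Combined with closedness of the range, $\ran A = (\ker A^{\ast})^{\perp} = \mathcal{H}$, so $A$ is bijective; by the bounded inverse theorem $A^{-1} \in \Lb(\mathcal{H})$, and substituting $\varphi \coloneqq A^{-1}\psi$ into $c\norm{\varphi} \leq \norm{A\varphi}$ gives $c\norm{A^{-1}\psi} \leq \norm{\psi}$, i.e., $\norm{A^{-1}} \leq 1/c$.

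For the final estimate $\Re A^{-1} \geq c\norm{A}^{-2}$, I would pick an arbitrary $\psi \in \mathcal{H}$, set $\varphi \coloneqq A^{-1}\psi$, and compute
\begin{equation*}
  \Re\scprod{A^{-1}\psi}{\psi}_{\mathcal{H}}
  = \Re\scprod{\varphi}{A\varphi}_{\mathcal{H}}
  = \Re\scprod{A\varphi}{\varphi}_{\mathcal{H}}
  \geq c\norm{\varphi}^{2},
\end{equation*}
where the second equality uses that real parts of conjugate complex numbers coincide. Since $\norm{\psi} = \norm{A\varphi} \leq \norm{A}\,\norm{\varphi}$, one has $\norm{\varphi}^{2} \geq \norm{\psi}^{2}/\norm{A}^{2}$, and combining the two inequalities produces the claimed bound. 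There is no real obstacle: the whole argument is a careful assembly of standard Hilbert-space manipulations; the only point requiring slight attention is the identity $\Re A^{\ast} = \Re A$, which is what turns a one-sided coercivity estimate into bijectivity.
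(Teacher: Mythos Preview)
Your proof is correct and complete. Note, however, that the paper does not actually prove this lemma: it is stated in the appendix with a citation to \cite[Prop.~6.2.3]{SeTrWa22} and no proof is given, so there is no in-paper argument to compare against. Your approach---coercivity from $\Re A \geq c$, closed range plus $\ker A^{\ast}=\{0\}$ via $\Re A^{\ast}=\Re A$, then the substitution $\varphi=A^{-1}\psi$ for the bound on $\Re A^{-1}$---is the standard one and matches what one finds in the cited reference.
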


\begin{lemma}[{{\cite[Lemma 3.9]{DTM23}}}]\label{le:dohnal}
  Let $\mathcal{H}$ be a Hilbert space that can be orthogonally decomposed into $\mathcal{H} = \mathcal{H}_{0} \oplus \mathcal{H}_{1}$.
  Consider an operator $T\in\Lb(\mathcal{H})$ in his block form $(T_{ij})_{i,j\in \{0,1\}}$ (cf.\ \Cref{le:components-holomorphic}).
  If we have $\Re T\geq d$ for some $d>0$,
  then $\Re T_{11}\geq d$ and $\Re (T_{00}-T_{01}T_{11}^{-1}T_{10})\geq d$ follow.
\end{lemma}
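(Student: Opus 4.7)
The plan is to handle the two inequalities in sequence, both by carefully chosen test vectors in the inner-product characterisation of $\Re T \geq d$.

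For the first claim, namely $\Re T_{11} \geq d$, I would plug in block vectors of the form $\varphi = 0 \oplus \varphi_{1}$ with $\varphi_{1} \in \mathcal{H}_{1}$. By the Pythagorean theorem and the block structure we obtain
\begin{equation*}
  \Re \scprod{T_{11}\varphi_{1}}{\varphi_{1}}_{\mathcal{H}_{1}} = \Re \scprod{T\varphi}{\varphi}_{\mathcal{H}} \geq d\norm{\varphi}_{\mathcal{H}}^{2} = d\norm{\varphi_{1}}_{\mathcal{H}_{1}}^{2},
\end{equation*}
which gives $\Re T_{11} \geq d$. In particular \Cref{le:realpart-bounded-from-below-implies-invertible} then guarantees that $T_{11}^{-1} \in \Lb(\mathcal{H}_{1})$, so the Schur complement $T_{00} - T_{01}T_{11}^{-1}T_{10}$ is well-defined in $\Lb(\mathcal{H}_{0})$.

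For the second claim, the key idea is to test $\Re T \geq d$ against the vector that cancels the $\mathcal{H}_{1}$-component of $T\varphi$. Concretely, for $\varphi_{0} \in \mathcal{H}_{0}$ I would set
\begin{equation*}
  \varphi \coloneqq \varphi_{0} \oplus (-T_{11}^{-1}T_{10}\varphi_{0}) \in \mathcal{H}_{0}\oplus \mathcal{H}_{1}.
\end{equation*}
A direct computation using the block form shows that the $\mathcal{H}_{1}$-entry of $T\varphi$ vanishes and the $\mathcal{H}_{0}$-entry equals $(T_{00} - T_{01}T_{11}^{-1}T_{10})\varphi_{0}$. Hence
\begin{equation*}
  \Re \scprod{T\varphi}{\varphi}_{\mathcal{H}} = \Re \scprod{(T_{00} - T_{01}T_{11}^{-1}T_{10})\varphi_{0}}{\varphi_{0}}_{\mathcal{H}_{0}}.
\end{equation*}
On the other hand, the Pythagorean theorem yields
\begin{equation*}
  \norm{\varphi}_{\mathcal{H}}^{2} = \norm{\varphi_{0}}_{\mathcal{H}_{0}}^{2} + \norm{T_{11}^{-1}T_{10}\varphi_{0}}_{\mathcal{H}_{1}}^{2} \geq \norm{\varphi_{0}}_{\mathcal{H}_{0}}^{2}.
\end{equation*}
Combining these two observations with the assumption $\Re T \geq d$ gives
\begin{equation*}
  \Re \scprod{(T_{00} - T_{01}T_{11}^{-1}T_{10})\varphi_{0}}{\varphi_{0}}_{\mathcal{H}_{0}} \geq d\norm{\varphi}_{\mathcal{H}}^{2} \geq d\norm{\varphi_{0}}_{\mathcal{H}_{0}}^{2},
\end{equation*}
which is the desired inequality for the Schur complement.

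There is no real obstacle here; the proof is entirely a Schur-complement trick. The only thing to be careful about is that the bound $\Re T_{11} \geq d$ is genuinely used (via \Cref{le:realpart-bounded-from-below-implies-invertible}) to make $T_{11}^{-1}$ available before forming the test vector $\varphi$, so the two parts of the lemma must be proved in the stated order.
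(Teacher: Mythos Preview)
Your proof is correct and follows the same overall strategy as the paper: prove $\Re T_{11}\geq d$ by testing with $(0,\varphi_1)$, then handle the Schur complement by a well-chosen block test vector and Pythagoras. The only difference is in the second step: you pick $\varphi=(\varphi_0,-T_{11}^{-1}T_{10}\varphi_0)$, which kills the $\mathcal{H}_1$-component of $T\varphi$ directly, whereas the paper encodes its test vector through a factorisation $R=Q^*TQ$ with $Q=\bigl(\begin{smallmatrix}1&0\\ -(T_{01}T_{11}^{-1})^*&1\end{smallmatrix}\bigr)$ and then reads off $\scprod{S\varphi_0}{\varphi_0}=\scprod{TQ(\varphi_0,0)}{Q(\varphi_0,0)}$; this amounts to the different test vector $(\varphi_0,-(T_{11}^{-1})^*T_{01}^*\varphi_0)$. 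Both choices yield the same scalar identity and the same Pythagoras estimate, so the arguments are interchangeable; your version is slightly more direct since it avoids verifying the matrix identity $R=Q^*TQ$.
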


\begin{proof}
  Let $\varphi_1\in \mathcal{H}_1$. Then,
  \begin{equation*}
    \Re \scprod{T_{11}\varphi_1}{\varphi_1}_{\mathcal{H}_{1}}
    =\Re \scprod*{T\begin{pmatrix} 0 \\ \varphi_1\end{pmatrix}}{\begin{pmatrix} 0 \\ \varphi_1\end{pmatrix}}_{\mathcal{H}}
    \geq d \scprod*{\begin{pmatrix} 0 \\ \varphi_1\end{pmatrix}}{\begin{pmatrix} 0 \\ \varphi_1\end{pmatrix}}_{\mathcal{H}}
    = d \scprod{\varphi_1}{\varphi_1}_{\mathcal{H}_{1}}.
  \end{equation*}
  By \Cref{le:realpart-bounded-from-below-implies-invertible} it follows that $T_{11}$ is invertible. For the accretivity of the second expression, one quickly checks the relation $R=Q^*TQ$, where
  \begin{equation*}
    Q \coloneqq \begin{pmatrix} 1 & 0 \\ -(T_{01}T_{11})^* & 1 \end{pmatrix}
    \quad\text{and}\quad
    R \coloneqq \begin{pmatrix} T_{00}-T_{01}T_{11}^{-1}T_{10} & 0 \\ T_{10}-T_{11}(T_{11}^{-1})^*T_{01} & T_{11}\end{pmatrix}.
  \end{equation*}
  Next, we let $\varphi_0\in \mathcal{H}_0$, $S\coloneqq T_{00}-T_{01}T_{11}^{-1}T_{10}$ and compute
  \begin{align*}
    \Re \scprod{S\varphi_0}{\varphi_0}_{\mathcal{H}_{0}}
    &= \Re \scprod*{R \begin{pmatrix} \varphi_0 \\ 0 \end{pmatrix}}{\begin{pmatrix} \varphi_0 \\ 0 \end{pmatrix}}_{\mathcal{H}} \\
    &= \Re \scprod*{TQ\begin{pmatrix} \varphi_0 \\ 0 \end{pmatrix}}{Q\begin{pmatrix} \varphi_0 \\ 0 \end{pmatrix}}_{\mathcal{H}} \\
    &\geq d \scprod*{Q\begin{pmatrix} \varphi_0 \\ 0 \end{pmatrix}}{Q\begin{pmatrix} \varphi_0 \\ 0 \end{pmatrix}}_{\mathcal{H}}
      \geq d \scprod{\varphi_0}{\varphi_0}_{\mathcal{H}_{0}}. \qedhere
  \end{align*}
\end{proof}

\begin{lemma}[{\cite[Prop.~13.1.4]{SeTrWa22}}]\label{le:sot-inverse-sequence-conv-crit}
  Let $\mathcal{H}$ be a Hilbert space and $(T_{n})_{n\in\N}$ a boundedly invertible sequence
  in $\Lb(\mathcal{H})$ such that $\sup_{n\in\N}\norm{T_{n}^{-1}}<\infty$. If $(T_{n})_{n\in\N}$
  converges to a $T\in \Lb(\mathcal{H})$ w.r.t.\ the strong operator topology and if $T$ has
  dense range, then $T$ is boundedly invertible and $(T^{-1}_{n})_{n\in\N}$ converges
  to $T^{-1}$ w.r.t.\ the strong operator topology.
\end{lemma}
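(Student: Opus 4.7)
The plan is to establish bounded invertibility of $T$ first and then derive the strong-operator convergence from the uniform bound on $\norm{T_n^{-1}}$ combined with a standard resolvent-type identity.

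For the first part, let $C \coloneqq \sup_{n\in\N}\norm{T_n^{-1}}$, which is finite by assumption. For arbitrary $x\in\mathcal{H}$ and $n\in\N$, writing $x = T_n^{-1}T_n x$ and applying the uniform bound yields
\begin{equation*}
  \norm{x}_{\mathcal{H}} \leq C \norm{T_n x}_{\mathcal{H}}.
\end{equation*}
Since $T_n x \to T x$ strongly, passing to the limit gives $\norm{x}_{\mathcal{H}} \leq C\norm{T x}_{\mathcal{H}}$. This lower bound immediately shows that $T$ is injective and that $\ran T$ is closed: indeed, if $T x_k \to y$, the inequality forces $(x_k)_k$ to be Cauchy, so $x_k\to x$ and $Tx = y$ by continuity. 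Combined with the hypothesis that $\ran T$ is dense, closedness of the range gives $\ran T = \mathcal{H}$. Hence $T$ is bijective. The lower bound above then translates into $\norm{T^{-1}y}_{\mathcal{H}} \leq C\norm{y}_{\mathcal{H}}$ for every $y\in\mathcal{H}$, so $T^{-1}\in\Lb(\mathcal{H})$ with $\norm{T^{-1}}\leq C$.

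For the second part, the key identity is the standard
\begin{equation*}
  T_n^{-1} - T^{-1} = T_n^{-1}(T - T_n) T^{-1},
\end{equation*}
valid now that $T^{-1}$ is known to exist as a bounded operator. Applied to a fixed $y\in\mathcal{H}$ and setting $x\coloneqq T^{-1}y$, one obtains
\begin{equation*}
  \norm{(T_n^{-1} - T^{-1})y}_{\mathcal{H}}
  \leq \norm{T_n^{-1}}\,\norm{(T - T_n)x}_{\mathcal{H}}
  \leq C\,\norm{(T - T_n)x}_{\mathcal{H}}.
\end{equation*}
Since $T_n \to T$ in the strong operator topology, the right-hand side tends to $0$ as $n\to\infty$, which proves $T_n^{-1}y \to T^{-1}y$ for every $y\in\mathcal{H}$, i.e.\ strong operator convergence of $(T_n^{-1})_n$ to $T^{-1}$.

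The proof is largely a bookkeeping exercise; there is no substantial obstacle. The only delicate point worth flagging is that the uniform lower bound $\norm{x}\leq C\norm{Tx}$ inherited from the sequence only yields injectivity and closed range for $T$, so the density-of-range hypothesis is genuinely needed to upgrade to surjectivity — otherwise $T$ could be merely a bounded-below embedding with proper closed range.
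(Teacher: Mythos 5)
Your proof is correct, and it is essentially the standard argument for this result (which the paper itself does not prove but cites from \cite[Prop.~13.1.4]{SeTrWa22}): the uniform bound $\norm{x}\leq C\norm{T_nx}$ passes to the limit to give that $T$ is bounded below, hence injective with closed range, so dense range yields bijectivity, and the identity $T_n^{-1}-T^{-1}=T_n^{-1}(T-T_n)T^{-1}$ together with $\sup_n\norm{T_n^{-1}}\leq C$ gives the strong convergence. No gaps; the remark that density of the range is genuinely needed for surjectivity is exactly the right point to flag.
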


\begin{lemma}[{\cite[Prop.~6.3.1]{SeTrWa22}}]\label{le:ubd-realpart-bounded-from-below-implies-invertible}
  Let $\mathcal{H}$ be a Hilbert space and $A\colon\dom (A)\subseteq\mathcal{H}\to\mathcal{H}$ densely defined and closed with
  $\dom (A^{\ast})\subseteq\dom (A)$. If $\Re \scprod{A\varphi}{\varphi}_{\mathcal{H}} \geq c > 0$ holds for all $\varphi\in\dom (A)$,
  then $A^{-1} \in \Lb(\mathcal{H})$ and $\norm{A^{-1}} \leq \frac{1}{c}$.
\end{lemma}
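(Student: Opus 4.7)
My plan is to prove this via the standard three-step argument: derive a lower bound on $\|A\varphi\|$, deduce injectivity and closed range, then use the hypothesis $\dom(A^*)\subseteq\dom(A)$ to rule out a nontrivial cokernel.

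First, I would apply Cauchy--Schwarz to the accretivity estimate: for $\varphi\in\dom(A)$,
\begin{equation*}
  c\norm{\varphi}^{2} \leq \Re\scprod{A\varphi}{\varphi}_{\mathcal{H}} \leq \abs{\scprod{A\varphi}{\varphi}_{\mathcal{H}}} \leq \norm{A\varphi}\,\norm{\varphi}\text{,}
\end{equation*}
hence $\norm{A\varphi}\geq c\norm{\varphi}$ for all $\varphi\in\dom(A)$. This bound has two immediate consequences: $A$ is injective, and since $A$ is closed, $\ran(A)$ is closed (if $A\varphi_n \to \psi$, then $(\varphi_n)$ is Cauchy by the lower bound, converges to some $\varphi$, and closedness yields $\varphi\in\dom(A)$ with $A\varphi=\psi$).

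Second, I would show $\ran(A)$ is dense, which combined with closedness yields surjectivity. Since $A$ is densely defined, $\ran(A)^\perp=\ker(A^*)$. Take any $\psi\in\ker(A^*)$. By hypothesis, $\psi\in\dom(A^*)\subseteq\dom(A)$, so the coercivity estimate applies to $\psi$. Moreover, by definition of the adjoint,
\begin{equation*}
  \scprod{A\psi}{\psi}_{\mathcal{H}} = \scprod{\psi}{A^{*}\psi}_{\mathcal{H}} = 0\text{,}
\end{equation*}
so $\Re\scprod{A\psi}{\psi}_{\mathcal{H}}=0 \geq c\norm{\psi}^{2}$, forcing $\psi=0$. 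Therefore $\ker(A^*)=\set{0}$, $\ran(A)$ is dense, and combined with the previous step $\ran(A)=\mathcal{H}$.

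Third, since $A$ is bijective, $A^{-1}\colon\mathcal{H}\to\dom(A)$ exists, and the lower bound $\norm{A\varphi}\geq c\norm{\varphi}$ rewritten with $\varphi = A^{-1}\eta$ gives $\norm{A^{-1}\eta}\leq \frac{1}{c}\norm{\eta}$ for all $\eta\in\mathcal{H}$. Thus $A^{-1}\in\Lb(\mathcal{H})$ with $\norm{A^{-1}}\leq 1/c$. The only delicate point is the use of the condition $\dom(A^{*})\subseteq\dom(A)$ in the density step; without it, one cannot plug elements of $\ker(A^*)$ back into the accretivity estimate, and the argument would fail (as it would for general merely accretive closed operators whose adjoints live outside the original domain).
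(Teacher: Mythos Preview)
Your proof is correct and follows the standard three-step argument (coercivity $\Rightarrow$ injectivity and closed range; $\dom(A^{*})\subseteq\dom(A)$ $\Rightarrow$ $\ker(A^{*})=\{0\}$ $\Rightarrow$ dense range; combine for bounded inverse). Note, however, that the paper does not supply its own proof of this lemma: it is stated in the appendix with a citation to \cite[Prop.~6.3.1]{SeTrWa22} and no proof is given, so there is nothing to compare against beyond observing that your argument is the expected textbook one.
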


\bibliographystyle{alphaurl}
\bibliography{bibfile}{}

\end{document}